\documentclass{elsarticle}
\usepackage[left=1.5cm, right=2.5cm, top=3cm, bottom=3 cm]{geometry}
\usepackage{hyperref}
\usepackage{amsmath}
\usepackage{amsfonts}
\usepackage{amsthm}
\usepackage{amsmath}
\usepackage{pgfplots}
\usepackage{pgfplotstable}
\usepackage{subcaption}
\usepackage{tikz}
\usepackage{amssymb}
\usepackage{multirow}
\usepackage{lipsum}
\usepackage{amsfonts}
\usepackage{graphicx}
\usepackage{epstopdf}
\usepackage{algorithmic}
 \usepackage{mathrsfs}
 
\usepackage{slashbox}
\usepackage{booktabs}

\usepackage{todonotes}
\newtheorem{thm}{Theorem}
\newtheorem{lem}[thm]{Lemma}
\newtheorem{corollary}[thm]{Corollary}
\newdefinition{rmk}{Remark}
\definecolor{seabornblue}{rgb}{0.2980392156862745, 0.4470588235294118, 0.6901960784313725}
\definecolor{seaborngreen}{rgb}{0.3333333333333333, 0.6588235294117647, 0.40784313725490196}
\definecolor{seabornred}{rgb}{0.7686274509803922, 0.3058823529411765, 0.3215686274509804}
\journal{Journal of \LaTeX\ Templates}

\usepackage{todonotes}

\newcommand{\revise}[1]{#1}
\newcommand{\revisenew}[1]{\textcolor{black}{#1}}
\DeclareMathOperator{\grad}{grad}
\DeclareMathOperator{\Vol}{Vol}









\bibliographystyle{elsarticle-num}

\begin{document}

\begin{frontmatter}

\title{A new mixed finite-element method for $H^2$ elliptic problems\tnoteref{mytitlenote}}
\tnotetext[mytitlenote]{The work of AH and SM was partially supported by an NSERC discovery grant. The work of PEF is supported by EPSRC grants EP/R029423/1 and EP/V001493/1. The authors would like to thank an anonymous referee for substantially improving the manuscript.}


\author[mymainaddress]{Patrick E. Farrell}
\ead{patrick.farrell@maths.ox.ac.uk}

\author[mysecondaryaddress]{Abdalaziz Hamdan\corref{mycorrespondingauthor}}
\cortext[mycorrespondingauthor]{Corresponding author}
\ead{ahamdan@mun.ca}
\author[mysecondaryaddress]{Scott P. MacLachlan}
\ead{smaclachlan@mun.ca}

\address[mymainaddress]{Mathematical Institute, University of Oxford, Oxford, UK }
\address[mysecondaryaddress]{Department of Mathematics and Statistics, Memorial University of Newfoundland, St.~John's, NL, Canada }

\begin{abstract}
Fourth-order differential equations play an important role in many applications in science and engineering. In this paper, we present a three-field mixed finite-element formulation for fourth-order problems, with a focus on the effective treatment of the different boundary conditions that arise naturally in a variational formulation. Our formulation is based on introducing the gradient of the solution as an explicit variable, constrained using a Lagrange multiplier. The essential boundary conditions are enforced weakly, using Nitsche's method where required.
As a result, the problem is rewritten as a saddle-point system, requiring analysis of the resulting finite-element discretization and the construction of optimal linear solvers. Here, we discuss the analysis of the well-posedness and accuracy of the finite-element formulation. Moreover, we develop monolithic multigrid solvers for the resulting linear systems. Two and three-dimensional numerical results are presented to demonstrate the accuracy of the discretization and efficiency of the multigrid solvers proposed. 
\end{abstract}

\begin{keyword}
Mixed Finite-Element Methods, Biharmonic Equation, Multigrid Methods, Saddle-Point Problems
\end{keyword}

\end{frontmatter}

\section{Introduction}
Fourth-order differential operators often appear in mathematical models of thin films and plates \cite{MR1915664,pevnyi2014modeling,MR899701}, and pose
significant challenges in numerical simulation in comparison to equations governed by more familiar second-order operators. A motivating example arises from modeling equilibrium states of smectic A liquid crystals (LCs), which correspond to minimizers of a given energy functional. 
For example, the Pevnyi, Selinger, and Sluckin energy functional for smectic A liquid crystals is given by~\cite{pevnyi2014modeling}:
\begin{equation}\label{1.1}
I(u,\vec{n})=\int_{\Omega}^{ }\frac{a}{2}u^2+\frac{b}{3}u^3+\frac{c}{4}u^4+B[(\partial_i\partial_j+q^2n_in_j)u]^2+\frac{K}{2}(\partial_in_j)^2,
\end{equation} 
where $\Omega\subset \mathbb{R}^d,\ d\in\{2,3\}$ is a bounded Lipschitz domain, $a,b,q, B$, and $K$ are positive real-valued constants determined by the experiment and material under consideration, $\vec{n}:\Omega\rightarrow \mathbb{R}^d$ is a unit vector field called the director, and $u:\Omega\rightarrow \mathbb{R}$ is the smectic order parameter representing the density variation of the LC. This energy is to be minimized subject to the constraint that $\vec{n}\cdot\vec{n} = 1$ pointwise almost everywhere. When enforcing this constraint with a Lagrange multiplier, the Euler--Lagrange equations for \eqref{1.1} lead to a coupled system of PDEs, with a fourth-order operator applied to $u$, a second-order operator acting on $\vec{n}$, and an algebraic constraint.

Motivated by such examples, several families of finite-element methods have been developed to approximate solutions of PDEs with fourth-order terms. In this work, we consider the minimization of a simplified form of the energy \eqref{1.1} with suitable boundary conditions, given in variational form as
\begin{equation} \label{eqn:our_problem}
\min_{v\in H^2(\Omega)}\frac{1}{2}\int_{\Omega}(\Delta v)^2+c_0\nabla v \cdot\nabla v+c_1v^2-\int_{\Omega}fv,
\end{equation}
with nonnegative constants $c_0$ and $c_1$. While the variational formulation in \eqref{1.1} is written in terms of the Hessian operator, here we consider the classical fourth-order biharmonic (Laplacian squared) and will consider the Hessian problem in future work.  Sufficiently smooth extremizers of \eqref{eqn:our_problem} must satisfy its Euler--Lagrange equations, which yield a fourth-order problem,
\begin{equation}\label{eq:mod_biharmonic}
\Delta^2u-c_0\Delta u + c_1u=f.
\end{equation}
We consider three-field mixed formulations for this fourth-order problem, with a particular focus on the treatment of the boundary conditions that arise naturally from the transition from the variational to strong forms. These formulations introduce the gradient of the solution as an explicit variable constrained using a Lagrange multiplier. Our approach is general in the sense that we are able to use elements of order $k, \ k+1,\ \text{and}\ k+1$ for the solution, its gradient, and the Lagrange multiplier respectively, where $k$ can be as large as the smoothness of the solution allows. The existence and uniqueness proofs are not complicated. A drawback here is that our formulation provides suboptimal convergence for some boundary conditions, as discussed below. 

If $c_0=c_1=0$, then \eqref{eqn:our_problem} represents the classical  
biharmonic equation. Many different types of finite-element methods have been considered in this context.
Conforming methods, in which the finite-dimensional space is a
subspace of the Sobolev space $H^2(\Omega)$, rely on the use of complicated basis functions. These require a high number of degrees of freedom per element, especially in three dimensions. Moreover, the elements are typically not affine equivalent; i.e.~the basis functions cannot be mapped to each element using a reference element in the standard way, and more complicated approaches are needed \cite{MR0520174,kirby2012common, articl}. In order to avoid the use of such $C^1$ elements, other types of finite elements can be used, leading to nonconforming methods in which the finite-element space is not a subspace of $H^2(\Omega)$, such as Morley and cubic Hermite elements \cite{MR2373954,MR2207619,MR0520174}. \revise{While simple to define, these elements are generally difficult to implement in existing software, and also require analysis of the consistency error, which sometimes implies suboptimal convergence when the consistency error is larger than the interpolation error of the element~\cite{MR3664257}.}

$C^0$ interior penalty (C0IP) methods can also be used for fourth-order problems, where the continuity of the function derivatives are weakly enforced using stabilization terms on interior edges~\cite{MR2142191,MR3022211,MR345432}. Brenner and Sung \cite{ MR2142191} solved the problem $\Delta^2 u-\nabla \cdot(\beta(x)\nabla u)=f$, where $\beta(x)$ is a nonnegative $C^1$ function. Their approach is to find $u\in CG_k(\Omega,\tau_h)$, $k\geq 2$, that satisfies the system
\begin{equation}\label{C0IP}
a_h(u,\phi)+b_h(u,\phi)+\gamma c_h(u,\phi)=\langle f,\phi\rangle, \ \forall \phi\in CG_{k}(\Omega,\tau_h),
\end{equation} 
where $\gamma>0$ is a penalty parameter, $CG_{k}(\Omega,\tau_h)$ is the space of continuous Lagrange elements of degree $k$ on a triangulation $\tau_h$ of the domain $\Omega$, and    
\begin{eqnarray}
a_h(u,\phi)&=& \sum_{T\in\tau_h}\int_{T}\left(\nabla\nabla u:\nabla\nabla \phi+\beta(x)\nabla u\cdot\nabla \phi\right),\\
b_h(u,\phi)&=& \sum_{e\in\epsilon_h}\int_{e}\left\{\frac{\partial^2 u }{\partial^2n}\right\}\left[\frac{\partial \phi}{\partial n}\right]+  \sum_{e\in\epsilon_h}\int_{e}\left\{\frac{\partial^2 \phi }{\partial^2n}\right\}\left[\frac{\partial u}{\partial n}\right],\\
c_h(u,\phi)&=&\sum_{e\in\epsilon_h}\frac{1}{|e|}\int_{e}\left[\frac{\partial u}{\partial n}\right]\left[\frac{\partial \phi}{\partial n}\right],
\end{eqnarray}
where $\left\{\frac{\partial u}{\partial n}\right\}$ and $\left[\frac{\partial \phi}{\partial n}\right] $ denote the standard average and jump on each edge. Here, $\tau_h$ is the set of cells in a mesh and $\epsilon_h$ is the set of edges.  While C0IP methods have advantages, such as enabling the use of simple Lagrange elements and the ability to use arbitrarily high-order elements \cite{MR2142191}, they also have some disadvantages. The weak forms are more complicated than those used for classical conforming and nonconforming methods. Moreover, the need for the penalty parameter is also a drawback, as it is sometimes not trivial to decide how large this parameter must be to achieve stability, especially as parameters in the PDE are varied~\cite{nguyen2008discontinious}. Similarly, discontinuous Galerkin approaches can also be applied to this problem \cite{MR0431742,MR2298696}, augmenting the forms in \eqref{C0IP} to account for basis functions that do not enforce $C^0$ continuity across elements. These share the disadvantages of C0IP methods, while requiring more degrees of freedom than $C^0$ approaches.



Another attractive option to avoid using $H^2$-conforming methods is mixed finite-element methods, in which the gradient or the Laplacian of the solution are approximated in addition to the solution itself \cite{MR0520174,MR3580405,cheng2000some,li2017stable,behrens2011mixed,malkus1978mixed,banz2017two, MR3667017, MR583486,MR3533246,MR4113080}. A natural classification of such mixed finite-element methods is based on how many functions (fields) are directly approximated. Given clamped boundary conditions, where both $u$ and $\nabla u$ are prescribed on the boundary, two functions are approximated in \cite{MR0520174, cheng2000some, malkus1978mixed, MR583486}, both $u$ and either its gradient or its Laplacian/Hessian. In \cite{MR0520174}, the biharmonic problem is rewritten as a coupled system of Poisson equations, in which the unknown and its Laplacian are both directly approximated. In \cite{malkus1978mixed}, the 2D biharmonic problem is approximated by minimizing
\begin{equation*}
J(u,\vec{v})=\frac{1}{2}\|\nabla\vec{v}\|_0^2+\frac{1}{2\epsilon}\|\rho_0(\vec{v}-\nabla u)\|_0^2-\langle f,u \rangle, \quad \text{for} \ 0<\epsilon\leq ch^2,
\end{equation*}
where $\rho_0$ is the orthogonal projection from $[L^2(\Omega)]^2$ to the space of piecewise constant functions, and the functions $u$ and $\vec{v}$ are approximated using bilinear elements on a rectangular mesh. An error analysis of this method requires the solution to be at least in $H^{4.73}(\Omega)$~\cite{MR645657}. A similar approach solves the $d$-dimensional biharmonic problem, replacing the $L^2$ projection onto piecewise constant functions with that onto the space of multilinear vector-valued functions whose $i^{th}$ component is independent of $x_i$ \cite{cheng2000some}. This approach requires less regularity on the solution, $u\in H^4(\Omega)$, than was required in \cite{malkus1978mixed}. These approaches only treat clamped boundary conditions.

A second class of mixed finite-element methods is that of four-field formulations, in which $u,\ \nabla u, \ \nabla^2u, \ \text{and}\  \nabla\cdot(\nabla^2 u)$ are directly approximated. In \cite{li2017stable}, a mixed formulation approximating these fields and its stability in $H^1_0(\Omega)\times [{H}^1_0(\Omega)]^2\times \boldsymbol{L}^2_{\text{sym}}(\Omega)\times H^{-1}(\text{div}, \Omega)$ is discussed for $\Omega\subset\mathbb{R}^2$, where $\boldsymbol{L}^2_{\text{sym}}(\Omega)$ is the space of $2\times 2$ symmetric tensors with components in $L^2(\Omega)$, and $H^{-1}(\text{\text{div}}; \Omega)$ is the dual space of $H_0(\text{rot},\Omega)=\{\vec{\psi}\in [L^2(\Omega)]^2 \ |\ \text{rot}\,\vec{\psi}\in L^2(\Omega), \ \vec{\psi}\cdot\vec{t}=0\ \text{on } \partial\Omega\}$, where $\vec{t}$ is the unit tangent vector to $\partial\Omega$. A similar approach with different function spaces is given in \cite{behrens2011mixed}. This approach, focused on the discrete level, finds $(u_h,\vec{q}_h,\ \bar{ z}_h, \ \vec{\sigma}_h ) \in DG_k(\Omega,\tau_{h})\times [{DG}_k(\Omega,\tau_{h})]^2\times \revise{\boldsymbol{RT}_{k+1}(\Omega,\tau_h)\times RT_{k+1}(\Omega,\tau_{h})}\subset L^2(\Omega)\times[{L}^2(\Omega)]^2\times \boldsymbol{H}(\text{div}, \Omega) \times H(\text{div};\Omega) $, where $u_h,\vec{q}_h,\ \bar{ z}_h, \ \text{and}\  \vec{\sigma}_h$ are approximations of  $u,\ \nabla u, \ \nabla^2u, \ \text{and} \ \nabla\cdot(\nabla^2 u)$ respectively, and $\bar{y}\in \boldsymbol{H}(\text{div};\Omega)) $ means that each row of the tensor
$\bar{y}$ belongs to $H(\text{div}; \Omega)$. \revise{As defined in Section \ref{sec:background}}, ${DG}_k(\Omega,\tau_{h})$ and $RT_k(\Omega,\tau_{h})$ denote the discontinuous Lagrange and Raviart--Thomas approximation spaces of order $k$ on mesh $\tau_{h}$, respectively, \revise{and $\boldsymbol{RT}_k(\Omega,\tau_h)$ denotes the} tensor-valued functions with rows in $RT_k(\Omega,\tau_{h})$. However, these four-field formulations lead to discretizations
with large numbers of degrees of freedom, posing difficulties in the development of efficient linear solvers. 

\revise{The third class of mixed finite-element methods is that of three-field formulations~\cite{MR3580405,MR3667017,MR3533246,MR4113080}.  In \cite{MR3667017}, Gallistl presents mixed finite-element methods for general polyharmonic problems. As a special case, a continuum three-field formulation for the biharmonic is proposed in \cite[Section 4.1]{MR3667017}, based on decomposing the problem into two Poisson-type equations and a generalized Stokes problem. However, when discretized, this formulation requires a fourth field, due to the need for a Lagrange multiplier to enforce a rot-free condition on one of the three fields. These formulations are shown to be effective for both clamped and simply-supported boundary conditions. Another family of three-field mixed formulations is based on the Helmholtz decomposition of the space $H^{-1}(\text{div} \, \boldsymbol{\textbf{div}},\Omega)_{\text{sym}}=\left\{\mathscr{M}\in \boldsymbol{L}^2_{\text{sym}}(\Omega)| \ \text{div} \, \boldsymbol{\textbf{div}} \mathscr{M}\in H^{-1}(\Omega)  \right\}$, introduced in \cite{MR3533246}. Finally, \cite{MR3580405} proposes a methodology where the unknowns are the function, its gradient, and a Lagrange multiplier.} Assuming again homogeneous clamped boundary conditions, these lead to finding the saddle-point $(u,\vec{v}, \vec{\alpha})\in H^1_0(\Omega) \times H_0(\text{div};\Omega)\times M$ of the Lagrangian functional 
\begin{equation}
	\mathcal{L}\big((u,\vec{v}), \vec{\alpha}\big)=\frac{1}{2}\|\nabla\cdot\vec{v}\|_0^2+\int_{\Omega}\vec{\alpha}\cdot(\vec{v}-\nabla u)-\int_{\Omega}fu
\end{equation}
where 
$M=\{\vec{\alpha}\in H_0(\text{div};\Omega)^* \ |\ \nabla\cdot\vec{\alpha}\in H^{-1}(\Omega)\}$. \revise{Here, $H_0(\text{div};\Omega)^*$ and $H^{-1}(\Omega)$ are the dual spaces of $H_0(\text{div};\Omega)$ and $H^1_0(\Omega)$ respectively, and $H_0(\text{div}; \Omega):=\{\vec{v}\in H(\text{div};\Omega) \ |\  \vec{v}\cdot\vec{n}=0 \text{ on } \partial \Omega\}$.} At the discrete level, the method in \cite{MR3580405} finds $(u_h,\vec{v}_h,\ \vec{\alpha}_{2h})\in CG_{1}(\Omega,\tau_h)\times RT_{1}(\Omega,\tau_{h})\times DG_{0}(\Omega,\tau
_{2h})$, where the Lagrange multiplier $\vec{\alpha}_{2h}$ is constructed in $\tau_{2h}$ to guarantee well-posedness at the discrete level and to achieve an optimal error estimate. This approach only treats clamped boundary conditions, and requires the use of different meshes in the discretization. Moreover, it is not mentioned if the discretization can be generalized to higher orders. Here, we propose a similar three-field formulation, but treating the generalized problem in \eqref{eq:mod_biharmonic} with more general boundary conditions, and using different discretization spaces of arbitrarily high degree. Unlike conforming methods, our approach works effectively in both two and three dimensions.

Strongly imposing essential boundary conditions with some finite-element basis functions is difficult \cite{articl}. In addition, it can, sometimes, negatively affect properties of the finite-element method, such as its stability  
and accuracy~\cite{aziz,MR2501054}. Weakly imposing the boundary conditions via a penalty method \cite{MR351118,MR0478662} may help. An attractive family of penalty methods are the Nitsche-type methods \cite{Nitsche_1971} for which optimal convergence can be achieved. Applications of Nitsche's method to second-order PDEs can be found in \cite{MR2501054,article,aziz}. Moreover, Nitsche-type penalty methods have been used to impose essential boundary conditions for some discretizations of the biharmonic and other fourth-order problems \cite{MR2683379,articl,BENZAKEN2021113544}.  While we are able to impose a variety of boundary conditions directly in our variational formulation, we utilize Nitsche-type penalty methods for a particular case where strong enforcement of the boundary conditions leads to problems establishing inf-sup stability of the discretization.

At the discrete level, the resulting linear system of our three-field formulation is a saddle point system~\cite{MR2168342}, of the form
\[
\begin{bmatrix}
A&B^T\\
B&0
\end{bmatrix}
\begin{bmatrix}
U
\\{\alpha}
\end{bmatrix}=
\begin{bmatrix}
f\\g
\end{bmatrix}
\]
where $U$ represents discrete degrees of freedom associated with both $u$ and $\vec{v}$, while $\alpha$ represents discrete degrees of freedom associated with $\vec{\alpha}$, leading to matrices  $A\in \mathbb{R}^{n\times n}, \ B\in \mathbb{R}^{m\times n}$ and the zero matrix $0 \in \mathbb{R}^{m \times m}$. In our formulation, $A$ will be symmetric and positive semi-definite. This kind of problem appears in many areas of computational science and engineering \cite{MR2168342}. For discretized PDEs, the condition number of such systems usually grows like $h^{-k}$ for $k > 0$, resulting in increasingly ill-conditioned systems as the mesh size, $h$, goes to zero. This growth of the condition number leads to slow convergence of unpreconditioned Krylov methods. Therefore, we employ preconditioning in order to develop a mesh-independent algorithm to solve these systems. Two common families of preconditioners are block factorization \cite{MR2155549,MR4016136,MR1762024} and monolithic multigrid preconditioners~\cite{MR3639325,MR3504546,adler2021,MR848451}.  In this work, we propose an effective monolithic multigrid solvers for the arising saddle-point systems \cite{adler2021,MR3639325,Pcpatch}. \revise{We note that efficient multigrid solvers have also been investigated for other discretization approaches, including C0IP methods~\cite{MR2217379,10.1007/978-3-642-35275-1_13} and mixed methods using Hellan-Herrmann-Johnson elements~\cite{MR3533246,MR3817803}.}

This paper is organized as follows. In Section \ref{sec:background}, a brief summary is given of the Sobolev and finite-element spaces employed. The weak forms, uniqueness of solutions at the continuum and discrete levels, and an error analysis are presented in Sections \ref{sec:continuum} and \ref{sec:discrete}. The monolithic multigrid preconditioner and the details of the linear solver are presented in Section \ref{sec:multigrid}. Finally, numerical experiments showing the accuracy of the finite-element method and the effectiveness of the linear solver are given in Section \ref{sec:numerical}.          

\section{Background}
\label{sec:background}

\revise{Throughout this paper, we assume $\Omega\subset \mathbb{R}^d$, $d\in\{2,3\}$ to be an open, connected and  bounded polytope with Lipschitz boundary.} On a simplex $T \in \tau_h$, all degrees of freedom of the discontinuous Lagrange $DG_k(\Omega,\tau_h)$ element, $k\geq 0$, are considered to be internal; i.e., no continuity is imposed by these elements \cite{kirby2012common}. In contrast, the continuous Lagrange $CG_k(\Omega,\tau_h)$ elements, $k\geq 1$, possess full $C^0$ continuity across element edges. Here, we primarily make use of $DG_k(\Omega,\tau_h)$ approximations of functions in $L^2(\Omega)$. We also consider the Raviart-Thomas $RT_{k}(\Omega,\tau_{h})$ element, $k\geq 1$, which is $H(\text{div})$-conforming, where the normal component is continuous across element faces, \revise{defining the function space $RT_k(\Omega,\tau_h)|_T\in[P_{k-1}(T)]^d + \vec{x}P_{k-1}(T)$, $\forall T\in \tau_h$, where $P_{k-1}(T)$ is the space of multivariate polynomials of degree at most $k-1$ on simplex $T$. We point out that, while the lowest-order Raviart-Thomas element is sometimes denoted $RT_{0}(\Omega,\tau_h)$, we follow the alternate notation (cf.~\cite{kirby2012common}),  where the lowest-order element is denoted $RT_{1}(\Omega,\tau_h)$, with the property that $RT_{k}(\Omega,\tau_h)\subset DG_{k}(\Omega,\tau_h)$. }
Finally, we define $RT^{\Gamma}_{k}(\Omega,\tau_{h})=\left\{\vec{v}\in RT_{k}(\Omega,\tau_{h})\middle| \vec{v}\cdot\vec{n}=0\ \text{on}\ \Gamma\subset\partial\Omega \right\}$. A standard approximation result for these elements is stated next.  
\begin{thm}\label{pro}\cite{MR3097958,MR2322235,kirby2012common}
	Let $I^{k}_{h}:H^{k+1}(\Omega)\rightarrow DG_{k}(\Omega,\tau_h)$, $\varPi^{k}_{h}:[H^{k+1} (\Omega)]^d\rightarrow RT_{k}(\Omega,\tau_h)$, and $L^{k}_{h}:[H^{k+1}(\Omega)]^d\rightarrow  [CG_{k}(\Omega,\tau_h)]^d$ be the finite-element interpolation operators. Then there exist constants $\tilde{c}$, \revise{$c_1$, $c_2$}, and $\hat{c}$, such that for any $u\in H^{k+1}(\Omega)$ and $\vec{v}\in \left[H^{k+1}(\Omega)\right]^d$,
	\begin{align}
	\|u-I^k_hu\|_0&\leq \tilde{c}h^{k+1}|u|_{k+1},\ \ \forall k\geq 0, \\
	\revise{\|\vec{v}-\varPi^k_h \vec{v}\|_{0}}&\revise{\leq c_1h^{k}|\vec{v}|_{k}, \ \ \forall k>0,}\\
\revise{	\|\nabla\cdot\left(\vec{v}-\varPi^k_h \vec{v}\right)\|_{0}}&\revise{\leq c_2h^{k}|\vec{v}|_{k+1}, \ \ \forall k>0,}\\
	\|\vec{v}-L^k_h \vec{v}\|_1&\leq \hat{c}h^k|\vec{v}|_{k+1},\ \ \forall k>0.
	\end{align}
\end{thm}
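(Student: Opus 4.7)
The plan is to prove all three bounds by the standard reference-element/Bramble--Hilbert/scaling strategy, since these operators are cited from \cite{MR3097958,MR2322235,kirby2012common} and the argument follows familiar lines. Fix a reference simplex $\hat T$ and an affine map $F_T : \hat T \to T$ whose Jacobian has size $O(h)$ and whose inverse Jacobian has size $O(h^{-1})$, using shape regularity to hide the constants. The key property of each interpolant, local to each $T$, is that it reproduces a polynomial space of a specific degree; hence on $\hat T$ the operator $\operatorname{Id} - \hat\Pi$ annihilates that space, and the Bramble--Hilbert (Deny--Lions) lemma converts local boundedness of $\hat\Pi$ into a seminorm bound. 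Scaling back to $T$ and summing contributions over $T\in\tau_h$ then yields the global estimate.

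For the scalar $DG_k$ interpolant $I^k_h$, the local interpolant reproduces $P_k(T)$, so on the reference element $\|\hat u - \hat I^k \hat u\|_{L^2(\hat T)} \le C |\hat u|_{H^{k+1}(\hat T)}$. The affine change of variables produces the expected $h^{k+1}$ factor on the right, and squaring and summing over $T$ gives the first bound. The $[CG_k]^d$ interpolant $L^k_h$ acts componentwise, reproduces vector-valued $P_k$-polynomials, and by the same pullback argument satisfies $\|\hat v - \hat L^k \hat v\|_{H^1(\hat T)} \le C |\hat v|_{H^{k+1}(\hat T)}$; scaling the $L^2$ part and the $H^1$-seminorm part separately yields $h^{k+1}|\vec v|_{k+1}$ and $h^k |\vec v|_{k+1}$ respectively, and the slower of the two gives the third bound.

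The $RT_k$ interpolant requires two ingredients. For the $L^2$ part, $\Pi^k_h$ preserves $(P_{k-1})^d$, so the Bramble--Hilbert argument on $\hat T$ yields $\|\hat{\vec v} - \hat\Pi^k \hat{\vec v}\|_{L^2(\hat T)} \le C |\hat{\vec v}|_{H^k(\hat T)}$, and after Piola-transforming back to $T$ one obtains $\|\vec v - \Pi^k_h \vec v\|_0 \le C h^k |\vec v|_k$. For the divergence part, I would invoke the commuting diagram $\nabla\cdot (\Pi^k_h \vec v) = P_h^{k-1}(\nabla \cdot \vec v)$, where $P_h^{k-1}$ is the $L^2$ projection onto $DG_{k-1}$, and then estimate $\|\nabla\cdot(\vec v - \Pi^k_h \vec v)\|_0 \le C h^k |\nabla\cdot \vec v|_k \le C h^k |\vec v|_{k+1}$, reusing the scalar bound already proved. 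Adding the two contributions gives the stated $H(\text{div})$ estimate.

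The main technical obstacle I anticipate is ensuring that the face-based degrees of freedom of $\Pi^k_h$ are well-defined for $\vec v \in [H^{k+1}(\Omega)]^d$ (they involve traces on faces) and that the Piola pullback, rather than a naive componentwise pullback, is used so that the commuting diagram and the reproduction of $(P_{k-1})^d$ are preserved under the affine mapping; once these structural points are in place, the remaining scaling bookkeeping is routine.
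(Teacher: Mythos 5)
The paper does not prove this result at all: it is quoted as a standard approximation theorem with citations to Boffi--Brezzi--Fortin and related references, so there is no in-paper argument to compare against. Your sketch is the standard reference-element/Bramble--Hilbert/scaling proof (componentwise Lagrange and $DG$ estimates, Piola mapping plus the commuting-diagram property $\nabla\cdot\Pi^k_h=P^{k-1}_h\nabla\cdot$ for the Raviart--Thomas part), which is exactly the argument given in the cited sources and is correct, including the regularity caveat needed for the face degrees of freedom to be well defined.
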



An important property of our discretization is that it benefits from the usual mimetic relationships between $RT_{k+1}(\Omega,\tau_h)$ and $DG_{k}(\Omega,\tau_h)$, summarized in the following results.
\begin{lem}\cite{MR1401938, DNArnold_etal_2000a}\label{lemma,helm}
	Assume $\Omega$ is simply-connected. Then the Helmholtz decomposition of $RT_{k+1}(\Omega,\tau_h)$ is
	\begin{equation}
	RT_{k+1}(\Omega,\tau_h)=\bigg(\nabla\times V_h\bigg)\oplus \bigg(\grad_h DG_{k}(\Omega,\tau_h)\bigg),
	\end{equation} 
	where $\grad_h:DG_{k}(\Omega,\tau_h)\rightarrow RT_{k+1}(\Omega,\tau_h)$ is the discrete gradient operator, defined by
	\begin{equation*}
	\int_{\Omega}\grad_h u\cdot\vec{v}=-\int_{\Omega}u\nabla\cdot\vec{v}, \ \ \forall \vec{v}\in RT_{k+1}(\Omega,\tau_h).
	\end{equation*} 
	For $d=2$, $\nabla\times =
	\begin{bmatrix}
	-\frac{\partial}{\partial y}   \\
	\frac{\partial}{\partial x}\\
	\end{bmatrix}$ and $V_h=CG_{k+1}(\Omega,\tau_{h})$, while $V_h=N^1_{k+1}(\Omega,\tau_{h})$ for $d=3$, where $N^{1}_{k+1}(\Omega,\tau_{h})$ is the N\'ed\'elec element of the first kind of order $k+1$. 
\end{lem}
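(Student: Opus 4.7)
The plan is to derive the decomposition from two classical facts about the discrete de~Rham complex: the $L^2$-adjoint relationship between $\grad_h$ and $-\nabla\cdot$ built into the definition of $\grad_h$, and the middle-exactness of the finite-element subcomplex on simply-connected domains.

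First I would verify $L^2$-orthogonality of the two candidate subspaces. For $u\in DG_k(\Omega,\tau_h)$ and $\vec{\phi}\in V_h$, the curl $\nabla\times\vec\phi$ lies in $RT_{k+1}(\Omega,\tau_h)$, so it is an admissible test function in the defining relation for $\grad_h$, giving
\[
\int_\Omega \grad_h u\cdot(\nabla\times\vec\phi)\;=\;-\int_\Omega u\,\nabla\cdot(\nabla\times\vec\phi)\;=\;0,
\]
because $\nabla\cdot(\nabla\times\cdot)=0$ holds pointwise both for the 2D rotated gradient of a scalar and for the standard 3D curl. Since the $L^2$-inner product is positive definite on $RT_{k+1}(\Omega,\tau_h)$, this orthogonality forces the intersection $(\nabla\times V_h)\cap\grad_h DG_k(\Omega,\tau_h)$ to be $\{\vec 0\}$, so any splitting we produce is automatically direct.

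Next I would show that every $\vec{v}\in RT_{k+1}(\Omega,\tau_h)$ actually splits. Let $Z_h=\{\vec{w}\in RT_{k+1}(\Omega,\tau_h):\nabla\cdot\vec{w}=0\}$ denote the discrete divergence kernel. Viewing $\grad_h$ and $-\nabla\cdot|_{RT_{k+1}}$ as a mutually adjoint pair of linear maps between the finite-dimensional $L^2$-inner-product spaces $DG_k(\Omega,\tau_h)$ and $RT_{k+1}(\Omega,\tau_h)$, the standard range/kernel identity gives $\operatorname{ran}(\grad_h)=Z_h^{\perp_{L^2}}$ inside $RT_{k+1}(\Omega,\tau_h)$. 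Hence
\[
RT_{k+1}(\Omega,\tau_h)\;=\;Z_h\;\oplus\;\grad_h DG_k(\Omega,\tau_h),
\]
as an $L^2$-orthogonal direct sum.

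It remains to identify $Z_h$ with $\nabla\times V_h$. The inclusion $\nabla\times V_h\subseteq Z_h$ is just the curl-div identity used above. The reverse inclusion is precisely the middle-exactness of the finite-element de~Rham sequence $V_h\xrightarrow{\nabla\times}RT_{k+1}(\Omega,\tau_h)\xrightarrow{\nabla\cdot}DG_k(\Omega,\tau_h)$, with $V_h=CG_{k+1}(\Omega,\tau_h)$ in 2D and $V_h=N^1_{k+1}(\Omega,\tau_h)$ in 3D, on simply-connected $\Omega$. This is the only nontrivial ingredient: it rests on the existence of quasi-interpolants that commute with $\nabla\times$ and $\nabla\cdot$ together with the continuous Poincar\'e lemma, and I would simply invoke the cited results rather than reprove it. Substituting $Z_h=\nabla\times V_h$ into the preceding orthogonal decomposition yields the claimed identity, completing the proof.
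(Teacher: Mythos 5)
Your proposal is correct. Note that the paper does not prove this lemma at all: it is quoted as a known result from the cited references, so there is no internal proof to compare against. Your argument is a clean reduction to exactly the ingredient you also defer to those references, namely the middle-exactness of the discrete de~Rham sequence $V_h\xrightarrow{\nabla\times}RT_{k+1}(\Omega,\tau_h)\xrightarrow{\nabla\cdot}DG_k(\Omega,\tau_h)$ on simply-connected domains; the remaining steps (orthogonality of $\nabla\times V_h$ and $\grad_h DG_k(\Omega,\tau_h)$ via the defining adjoint relation, and $\operatorname{ran}(\grad_h)=Z_h^{\perp}$ from the finite-dimensional range/kernel identity) are sound, and they implicitly use only the standard subcomplex facts that $\nabla\times V_h\subset RT_{k+1}(\Omega,\tau_h)$ and $\nabla\cdot RT_{k+1}(\Omega,\tau_h)\subset DG_k(\Omega,\tau_h)$ (the latter being the paper's Remark~1). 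What your write-up buys over the paper's bare citation is an explicit identification of where the topology of $\Omega$ enters (only in $Z_h=\nabla\times V_h$) and the observation that the splitting is in fact $L^2$-orthogonal, which the lemma itself does not state.
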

\begin{rmk}\cite{kirby2012common,MR3097958}\label{1234}
	$\forall  \vec{v}\in RT_{k+1}(\Omega,\tau_h)$, we have $\nabla\cdot\vec{v}\in DG_{k}(\Omega,\tau_h)$.
\end{rmk}\label{12ttt}
While we largely make use of the standard Sobolev norms, we will also use the ``strengthened'' norm,
\begin{equation} \label{eq:strengthened_norm}
\|\vec{v}\|^2_{{\text{div},{\Gamma}}}= \|\vec{v}\|^2_{\text{div}}+h\|\nabla\cdot\vec{v}\|_{0,\Gamma}^2+\frac{1}{h}\|\vec{v}\cdot\vec{n}\|^2_{0,{\Gamma}} 
\end{equation}
where ${\Gamma}\subset \partial\Omega$ (to be specified later), and
\begin{equation*}
\|\vec{v}\cdot\vec{n}\|^2_{0,{\Gamma}}=\int_{{\Gamma}}|\vec{v}\cdot\vec{n}|^2, \ \ \ \|\nabla\cdot\vec{v}\|^2_{0,{\Gamma}}=\int_{{\Gamma}}|\nabla\cdot\vec{v}|^2 .
\end{equation*}
For these norms, the inverse trace inequality below is a useful result.
\begin{thm}\cite{MR1986022,MR2431403}
	Let $T\in\tau_h$ be a $d$-simplex of $\Omega\subset \mathbb{R}^d$, $d\in\{2,3\}$. Then, for all $u\in DG_k(T)$,
	\begin{equation}\label{2.7}
	\|u\|_{0, \partial T}^2\leq \frac{(k+1)(k+d) \ \Vol_{d-1} (\partial T)}{d \ \Vol_d(T) }\|u\|_{0,T}^2  ,
	\end{equation}
	where $\|u\|_{0, \partial T}^2$ is defined as $\|u\|_{0, \partial T}^2=\int_{\partial T}u^2$, \revise{and $\Vol_{d}(T)$ and $\Vol_{d-1}(\partial T)$ are the Lebesgue measures of $T$ in $\mathbb{R}^d$ and $\partial T$ in $\mathbb{R}^{d-1}$, respectively. }
\end{thm}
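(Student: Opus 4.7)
The plan is to decompose $T$ as a cone over each of its $d+1$ faces and reduce the inequality to a sharp one-dimensional weighted polynomial bound, separating the geometric content from the polynomial-degree content.

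First, I would set up the cone parameterization. For each face $F_i$ of $T$ with opposite vertex $v_i$ and altitude $h_i$, note that $\Vol(T) = h_i \Vol(F_i)/d$, so $h_i = d\Vol(T)/\Vol(F_i)$. Writing $x = (1-s)v_i + s\,y$ for $y \in F_i$ and $s \in [0,1]$, the standard Jacobian computation (the tangential factor contracts by $s^{d-1}$ while the normal direction contributes $h_i$) gives
\begin{equation*}
\int_T u^2\, dx = h_i \int_{F_i} \int_0^1 q_y(s)^2\, s^{d-1}\, ds\, d\sigma(y),
\end{equation*}
where $q_y(s) := u((1-s)v_i + s\,y)$ is a polynomial of degree at most $k$ in $s$ satisfying $q_y(1) = u(y)$.

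Next, I would establish the sharp one-dimensional Jacobi-weighted bound: for every polynomial $q$ of degree at most $k$ on $[0,1]$,
\begin{equation*}
q(1)^2 \le (k+1)(k+d) \int_0^1 q(s)^2\, s^{d-1}\, ds.
\end{equation*}
Map $[0,1]$ to $[-1,1]$ via $x = 2s-1$ and expand $q$ in the orthogonal Jacobi basis $\{P_j^{(0,d-1)}\}_{j=0}^{k}$, which satisfies $P_j^{(0,d-1)}(1) = 1$ and the closed-form norm $\|P_j^{(0,d-1)}\|^2 = 2^d/(2j+d)$. The reproducing-kernel diagonal at the endpoint then telescopes to $\sum_{j=0}^{k} (2j+d)/2^d = (k+1)(k+d)/2^d$; converting the weight back to $[0,1]$ absorbs the $2^d$ and yields exactly $(k+1)(k+d)$.

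Finally, I would assemble the two ingredients. Pointwise application of the 1D inequality in $y$ followed by integration over $F_i$ gives
\begin{equation*}
\int_{F_i} u^2\, d\sigma = \int_{F_i} q_y(1)^2\, d\sigma(y) \le \frac{(k+1)(k+d)}{h_i} \int_T u^2\, dx = \frac{(k+1)(k+d)\,\Vol(F_i)}{d\,\Vol(T)} \|u\|_{0,T}^2.
\end{equation*}
Summing over the $d+1$ faces and using $\Vol(\partial T) = \sum_i \Vol(F_i)$ delivers the stated estimate.

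The main obstacle is the Jacobi step: the geometric constant $\Vol(\partial T)/(d\,\Vol(T))$ comes out cleanly from the cone change of variables, but the polynomial-degree factor $(k+1)(k+d)$ is sharp only if one identifies the correct Jacobi family (the weight $s^{d-1}$ is forced by the $(d-1)$-dimensional tangential contraction) and computes the orthogonal-polynomial normalization exactly. Any looser 1D bound would inflate this factor and break the sharpness of the constant quoted in the theorem.
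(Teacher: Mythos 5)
Your proof is correct. Note that the paper does not prove this result at all --- it is quoted from the cited references (the Warburton--Hesthaven-type sharp trace inverse inequality), where the per-face bound $\|u\|_{0,F}^2\le \frac{(k+1)(k+d)}{d}\frac{\Vol(F)}{\Vol(T)}\|u\|_{0,T}^2$ is derived via orthogonal (Dubiner/collapsed-coordinate) expansions on the reference simplex, with the constant ultimately coming from the same Jacobi-polynomial endpoint evaluation you use. Your route is a somewhat more elementary repackaging of that argument: the cone parameterization $x=(1-s)v_i+s\,y$ with Jacobian $h_i s^{d-1}$ is right (the normal component of $y-v_i$ is exactly the altitude $h_i$, and $\Vol(T)=h_i\Vol(F_i)/d$), the restriction $q_y$ is indeed a univariate polynomial of degree at most $k$, and your one-dimensional bound is the Christoffel-function extremal property for the weight $(1+x)^{d-1}$: with $P_j^{(0,d-1)}(1)=1$ and $\|P_j^{(0,d-1)}\|^2=2^d/(2j+d)$, Cauchy--Schwarz on the expansion gives $q(1)^2\le \sum_{j=0}^k\frac{2j+d}{2^d}\,\|\tilde q\|_w^2=(k+1)(k+d)\int_0^1 q(s)^2 s^{d-1}\,ds$ after undoing the affine change of variables. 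Integrating over each face and summing over the $d+1$ faces reproduces the stated constant exactly. The only cosmetic quibble is the word ``telescopes'' for what is simply an arithmetic-series sum, and you could state explicitly that the Cauchy--Schwarz step is what converts the kernel diagonal into the pointwise bound; neither affects correctness. What your version buys is a self-contained proof whose geometric factor ($\Vol(\partial T)/(d\Vol(T))$) and degree factor ($(k+1)(k+d)$) are cleanly decoupled, at the price of not exhibiting the extremal polynomial on the simplex as the reference's basis computation does.
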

\begin{corollary}\label{cor6}
	Consider a triangulation $\tau_h$ of the domain $\Omega \subset \mathbb{R}^d$, and let $\partial \tau_h:=\left\{T\in\tau_h\ \middle| \ \partial T\cap\partial\Omega \neq\varnothing \right\}$. Then,
	\begin{align*}
	\forall u_h\in DG_{k}(\Omega,\tau_h), \|u_h\|_{0,\partial\Omega}^2&\leq \gamma_1(k,\tau_h)\|u_h\|_0^2 \\
	\forall \vec{v}_h\in RT_{k+1}(\Omega,\tau_h),  \|\vec{v}_h\cdot\vec{n}\|^2_{0,\partial\Omega}&\leq \gamma_1(k+1,\tau_h)\|\vec{v}_h\|_0^2,
	\end{align*}
	where 
	\begin{equation}
	\gamma_1(k,\tau_h)=\max_{T\in \partial\tau_h}\frac{(k+1)(k+d) \ \Vol_{d-1}(\partial T)}{d \ \Vol_{d}(T) }.
	\end{equation}
\end{corollary}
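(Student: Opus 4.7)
The plan is to apply the inverse trace inequality from \eqref{2.7} cell-by-cell on the boundary strip $\partial\tau_h$, reduce to a global bound by taking the maximum of the constants, and then sum over the boundary simplices. Both inequalities follow the same template; the only subtlety for the Raviart--Thomas case is identifying the correct polynomial degree so that \eqref{2.7} can be invoked.

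For the $DG_k$ estimate, I would first observe that since only simplices meeting $\partial\Omega$ contribute,
\begin{equation*}
\|u_h\|_{0,\partial\Omega}^2 = \sum_{T\in\partial\tau_h}\|u_h\|_{0,\partial T\cap\partial\Omega}^2 \leq \sum_{T\in\partial\tau_h}\|u_h\|_{0,\partial T}^2.
\end{equation*}
Applying \eqref{2.7} on each $T\in\partial\tau_h$, bounding the resulting constant by $\gamma_1(k,\tau_h)$, and then extending the sum from $\partial\tau_h$ to all of $\tau_h$ (the added cell contributions are nonnegative) gives $\|u_h\|_{0,\partial\Omega}^2 \leq \gamma_1(k,\tau_h)\|u_h\|_0^2$.

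For the Raviart--Thomas estimate, I would use the pointwise inequality $|\vec{v}_h\cdot\vec{n}|^2 \leq |\vec{v}_h|^2$ on $\partial\Omega$ to reduce to bounding $\|\vec{v}_h\|_{0,\partial T}^2$ componentwise. Under the convention used in the paper (confirmed by Remark \ref{1234}, since $\nabla\cdot\vec{v}\in DG_k$ for $\vec{v}\in RT_{k+1}$), each scalar component of $\vec{v}_h\in RT_{k+1}(\Omega,\tau_h)$ is a polynomial of degree at most $k+1$ on $T$, hence lies in $DG_{k+1}(T)$. Applying \eqref{2.7} with $k$ replaced by $k+1$ to each component and summing the $d$ components yields
\begin{equation*}
\|\vec{v}_h\|_{0,\partial T}^2 \leq \frac{(k+2)(k+1+d)\,\Vol(\partial T)}{d\,\Vol(T)}\|\vec{v}_h\|_{0,T}^2,
\end{equation*}
whose prefactor is exactly the $T$-th entry in the maximum defining $\gamma_1(k+1,\tau_h)$. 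Summing over $T\in\partial\tau_h$ as before completes the argument.

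There is no real obstacle; the main thing to be careful about is matching the polynomial order in \eqref{2.7} to the correct value ($k+1$ rather than $k$) when treating the Raviart--Thomas case, together with the harmless step of discarding the interior cells when passing from $\sum_{T\in\partial\tau_h}\|\cdot\|_{0,T}^2$ to the global $L^2$ norm.
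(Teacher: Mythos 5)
Your argument is correct and matches what the paper intends: Corollary~\ref{cor6} is stated without proof as an immediate consequence of the inverse trace inequality \eqref{2.7}, applied elementwise on $\partial\tau_h$ with the constant bounded by the maximum and the sum extended to all cells. Your handling of the Raviart--Thomas case --- using $|\vec{v}_h\cdot\vec{n}|\leq|\vec{v}_h|$ pointwise and applying \eqref{2.7} componentwise with degree $k+1$, which is exactly why the constant is $\gamma_1(k+1,\tau_h)$ --- is the intended reading.
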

While the
ratio between  $\Vol_{d-1}(\partial T)$ and $\Vol_{d}(T)$ can be arbitrarily
large, $\gamma_1(k,\tau_h)$ is readily
bounded when we consider quasiuniform families of meshes
\cite[Definition 4.4.13]{MR2373954}, where $\Vol_{d-1}(\partial T)$ of each $d$-simplex
$T$ is bounded above by $\mathcal{O}(h^{d-1})$ and $\Vol_d(T)$ is bounded
below by $\mathcal{O}(h^d)$.  This naturally leads to an approximation
property for the trace norm. These results will be useful in the analysis
of the Nitsche boundary integrals. 
\begin{corollary}\label{cor:inverse_trace_quasi}
	Let $\{\tau_h\}$, $0< h \leq 1$ be a family of quasiuniform meshes of the domain $\Omega\subset \mathbb{R}^d$, $d\in\{2,3\}$.  Then, there exists $C_\Omega > 0$ such that for any $\tau_h$ in the family,
	\begin{align*}
	\forall u_h\in DG_{k}(\Omega,\tau_h), \|u_h\|_{0,\partial\Omega}^2&\leq \frac{\gamma_1(k)}{h}\|u_h\|_0^2, \\
	\forall \vec{v}_h\in RT_{k+1}(\Omega,\tau_h),  \|\vec{v}_h\cdot\vec{n}\|^2_{0,\partial\Omega}&\leq \frac{\gamma_1(k+1)}{h}\|\vec{v}_h\|_0^2,
	\end{align*}
	where 
	\begin{equation}
	\gamma_1(k)= C_\Omega(k+1)(k+d) \geq h\max_{T\in \partial\tau_h}\frac{(k+1)(k+d) \ \Vol_{d-1}(\partial T)}{d \ \Vol_{d}(T) },
	\end{equation}
        for all $\tau_h$. The constant $C_\Omega$ is determined by the dimension, $d$, and the quasiuniformity parameter for the family.
        
\end{corollary}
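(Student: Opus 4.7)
The plan is to deduce this directly from Corollary \ref{cor6} by using the quasiuniformity hypothesis to bound the geometric ratio $\Vol(\partial T)/\Vol(T)$ uniformly across the family. Corollary \ref{cor6} already gives the inverse trace inequalities with constant $\gamma_1(k,\tau_h)$ depending on the mesh; the remaining task is to show that $h \cdot \gamma_1(k,\tau_h)$ is bounded by a mesh-independent multiple of $(k+1)(k+d)$.

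First I would invoke the definition of a quasiuniform family (Brenner--Scott, Definition 4.4.13): there exist constants $\rho_1, \rho_2 > 0$, depending only on the quasiuniformity parameter and on $d$, such that every simplex $T$ in any $\tau_h$ satisfies $\Vol(T) \geq \rho_1 h^d$ and $\Vol(\partial T) \leq \rho_2 h^{d-1}$. Substituting these bounds into the expression for $\gamma_1(k,\tau_h)$ from Corollary \ref{cor6} gives
\begin{equation*}
\gamma_1(k,\tau_h) \;\leq\; \frac{(k+1)(k+d)}{d}\cdot\frac{\rho_2 h^{d-1}}{\rho_1 h^d} \;=\; \frac{\rho_2}{d\,\rho_1}\,\frac{(k+1)(k+d)}{h}.
\end{equation*}
Setting $C_\Omega = \rho_2/(d\rho_1)$ and $\gamma_1(k) = C_\Omega(k+1)(k+d)$ yields $h\,\gamma_1(k,\tau_h) \leq \gamma_1(k)$, uniformly over the family.

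The two stated inequalities then follow immediately by applying Corollary \ref{cor6} with $k$ replaced by $k$ (for the $DG_k$ case) and by $k+1$ (for the $RT_{k+1}$ case), dividing through by $h$ on the right-hand side using the bound above. I do not expect a genuine obstacle here: the only nontrivial ingredient is the verification that the geometric constants $\rho_1,\rho_2$ depend solely on $d$ and the quasiuniformity parameter, which is standard. The essential observation is simply that quasiuniformity converts the mesh-dependent ratio $\Vol(\partial T)/\Vol(T)$ into the uniform $\mathcal{O}(1/h)$ scaling needed for the trace inequality to be useful in the subsequent Nitsche analysis.
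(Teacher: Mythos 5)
Your proposal is correct and follows essentially the same route as the paper, which justifies this corollary in the surrounding text precisely by noting that quasiuniformity bounds $\Vol(\partial T)$ above by $\mathcal{O}(h^{d-1})$ and $\Vol(T)$ below by $\mathcal{O}(h^d)$, so that $h\,\gamma_1(k,\tau_h)$ is uniformly bounded and Corollary~\ref{cor6} applies with $k$ and $k+1$. No gaps; your explicit identification of $C_\Omega = \rho_2/(d\rho_1)$ simply makes the paper's implicit constant concrete.
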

\begin{lem}\label{cor:inverse_trace_quasi_approx}
	Let $\{\tau_h\}$, $0< h \leq 1$ be a family of quasiuniform meshes of
	the domain $\Omega\subset \mathbb{R}^d$, $d\in\{2,3\}$, and $\vec{v}\in
	\left[H^{k+2}(\Omega)\right]^d$. Then, there exists a constant, $m_1$,
	such that
	\[
	\left\|\left(\vec{v} - \Pi_h^{k+1}\vec{v}\right)\cdot
	\vec{n}\right\|_{0,\partial\Omega} \leq m_1 h^{k+1/2}\revise{|\vec{v}|_{k+1}},
	\]
where $\Pi^{k+1}_h$ is the natural Raviart-Thomas interpolation operator defined in Theorem~\ref{pro}.  
\end{lem}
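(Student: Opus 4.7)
The plan is to reduce the boundary $L^2$-norm of the Raviart--Thomas interpolation error to a sum over boundary simplices controlled by a scaled trace inequality, and then invoke the standard approximation properties of $\Pi_h^{k+1}$ on each element. The starting point is the decomposition
\[
\left\|(\vec{v}-\Pi_h^{k+1}\vec{v})\cdot\vec{n}\right\|_{0,\partial\Omega}^2 \;=\; \sum_{T\in\partial\tau_h}\left\|(\vec{v}-\Pi_h^{k+1}\vec{v})\cdot\vec{n}\right\|_{0,\partial T\cap\partial\Omega}^2 \;\leq\; \sum_{T\in\partial\tau_h}\left\|\vec{v}-\Pi_h^{k+1}\vec{v}\right\|_{0,\partial T}^2,
\]
using $|\vec{n}|=1$ and extending the domain of integration from $\partial T\cap\partial\Omega$ to $\partial T$.

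On each boundary simplex $T$, since $\vec{v}\in [H^{k+2}(\Omega)]^d\subset [H^1(T)]^d$ and $\Pi_h^{k+1}\vec{v}$ is piecewise polynomial, the restriction $\vec{w}:=\vec{v}-\Pi_h^{k+1}\vec{v}$ lies in $[H^1(T)]^d$, so the classical scaled trace inequality
\[
\|\vec{w}\|_{0,\partial T}^2 \leq C\bigl(h_T^{-1}\|\vec{w}\|_{0,T}^2 + h_T\,|\vec{w}|_{1,T}^2\bigr)
\]
applies componentwise. I would then insert the local Raviart--Thomas approximation estimates obtained by the usual Bramble--Hilbert/scaling argument underlying Theorem~\ref{pro}, giving $\|\vec{v}-\Pi_h^{k+1}\vec{v}\|_{0,T}\lesssim h_T^{k+1}|\vec{v}|_{k+2,T}$ and $|\vec{v}-\Pi_h^{k+1}\vec{v}|_{1,T}\lesssim h_T^{k}|\vec{v}|_{k+2,T}$ (with possibly lower-order seminorms absorbed using the hypothesis $h\leq 1$). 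Both contributions from the trace inequality scale as $h_T^{-1}\cdot h_T^{2(k+1)}=h_T^{2k+1}$ and $h_T\cdot h_T^{2k}=h_T^{2k+1}$, so together they yield $\|\vec{w}\|_{0,\partial T}^2\leq C\,h_T^{2k+1}|\vec{v}|_{k+2,T}^2$.

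Finally, I would invoke quasiuniformity to replace every $h_T$ by a single $h$, use $\sum_{T\in\partial\tau_h}|\vec{v}|_{k+2,T}^2\leq |\vec{v}|_{k+2,\Omega}^2$, and take a square root to conclude the claimed bound with some constant $m_1$ depending on the quasiuniformity parameter, $d$, and $k$. The main obstacle is the third step: matching the Raviart--Thomas interpolation estimates to the right-hand side $|\vec{v}|_{k+2}$ at the correct orders of $h_T$, since the standard estimates sometimes appear with multiple seminorms of $\vec{v}$, and one must verify that each may be dominated by $|\vec{v}|_{k+2}$ (possibly with an extra factor of $h_T$ arising from the Bramble--Hilbert lemma) without degrading the rate. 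The trace inequality and the summation are then routine.
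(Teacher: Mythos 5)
Your overall route is genuinely different from the paper's: the paper splits $\vec{v}-\Pi_h^{k+1}\vec{v}$ through the continuous Lagrange interpolant $L_h^{k+1}\vec{v}$, treats the $H^1$ part with the global trace theorem, and treats the discrete remainder $L_h^{k+1}\vec{v}-\Pi_h^{k+1}\vec{v}\in RT_{k+2}(\Omega,\tau_h)$ with the inverse trace inequality of Corollary~\ref{cor:inverse_trace_quasi}, whereas you work element-by-element with a scaled trace inequality and local Bramble--Hilbert estimates. That local strategy is sound in outline and even a bit more self-contained, since it avoids the auxiliary $CG$ interpolant and Corollary~\ref{cor:inverse_trace_quasi} entirely; the decomposition over $\partial\tau_h$, the scaled trace inequality on shape-regular simplices, and the final summation/quasiuniformity step are all fine.

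The gap is precisely the step you flagged and did not close. On each simplex, $\Pi_h^{k+1}$ reproduces only $[P_k(T)]^d$ (the space $RT_{k+1}$ does not contain all of $[P_{k+1}(T)]^d$), so the Bramble--Hilbert/scaling argument yields $\|\vec{v}-\Pi_h^{k+1}\vec{v}\|_{0,T}\lesssim h_T^{k+1}|\vec{v}|_{k+1,T}$ and $|\vec{v}-\Pi_h^{k+1}\vec{v}|_{1,T}\lesssim h_T^{k}|\vec{v}|_{k+1,T}$, with the seminorm $|\vec{v}|_{k+1,T}$, not $|\vec{v}|_{k+2,T}$, on the right. A lower seminorm cannot be ``dominated'' by a higher one, with or without factors of $h$: for $\vec{v}\in[P_{k+1}]^d$ one has $|\vec{v}|_{k+2}=0$, while $\left(\vec{v}-\Pi_h^{k+1}\vec{v}\right)\cdot\vec{n}$ need not vanish on $\partial\Omega$, because the normal trace of an $RT_{k+1}$ field on a face is a polynomial of degree $k$. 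Carried out carefully, your argument therefore proves the claimed rate but with right-hand side $m_1h^{k+1/2}\left(|\vec{v}|_{k+1}+|\vec{v}|_{k+2}\right)$ --- the same pair of seminorms appearing in the Raviart--Thomas estimate of Theorem~\ref{pro} --- rather than $|\vec{v}|_{k+2}$ alone as in the statement. (The paper's proof also passes through $\|\vec{v}-\Pi_h^{k+1}\vec{v}\|_0$ and records only $|\vec{v}|_{k+2}$ at that point, so the same caveat attaches to it; but as a proof of the lemma exactly as stated, your third step is where the matching you describe cannot be carried out.) If you state the conclusion with $|\vec{v}|_{k+1}+|\vec{v}|_{k+2}$, your proof is complete and the rate $h^{k+1/2}$, which is all that is used downstream in Theorem~\ref{cor1}, is preserved.
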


\begin{proof}
\revise{For any $T\in\tau_h$, we have $\vec{v}-\Pi^{k+1}_h\vec{v}\in
[H^{k+2}(T)]^d$.  By \cite[Lemma 12.15]{MR4242224}, there exists a constant $C_1$, independent of $h$,  such that the following inequality holds for every triangle, $T\in\tau_h$,}
\revise{\begin{eqnarray}\label{Trace-Ineq}
	\left\|\left(\vec{v} -
	\Pi_h^{k+1}\vec{v}\right)\cdot\vec{n}\right\|^2_{0,\partial T}\leq C_1\left(h\|\nabla\left(\vec{v}-\Pi^{k+1}_h\vec{v}\right)\|_{0,T}^2+h^{-1}\|\left(\vec{v}-\Pi^{k+1}_h\vec{v}\right)\|_{0,T}^2\right).
\end{eqnarray}
}\revise{Theorem~\ref{pro} gives 
\begin{eqnarray}\label{key2}
	h^{-1}\|\vec{v}-\Pi^{k+1}_h\vec{v}\|^2_{0,T}\leq  c_1^2h^{2k+1}|\vec{v}|^2_{k+1,T},
\end{eqnarray} 
where $c_1$ is the constant defined in Theorem~\ref{pro}. The error estimate in \cite[Proposition 2.5.1]{MR3097958} implies the existence of a constant $c_3$ such that }
\revise{\begin{eqnarray}\label{key1}
	h\|\nabla \left(\vec{v}-\Pi^{k+1}_h\vec{v}\right)\|_{0,T}^2\leq c_3h^{2k+1}|\vec{v}|^2_{k+1,T}.
\end{eqnarray} 
}
\revise{Summing \eqref{key2} and \eqref{key1} over all elements, $T$, with
$\partial T \cap \partial\Omega \neq 0$ gives
}
\revise{\begin{eqnarray*}
	\left\|\left(\vec{v} - \Pi_h^{k+1}\vec{v}\right)\cdot
	\vec{n}\right\|^2_{0,\partial\Omega}\leq 	\left\|\vec{v}-\Pi_h^{k+1}\vec{v}\right\|^2_{0,\partial\Omega}\leq C_1\left(c_1^2 + c_3\right) h^{2k+1}|\vec{v}|^2_{k+1}.
\end{eqnarray*}}
\revise{Thus, we have}
\[\revise{\left\|\left(\vec{v} - \Pi_h^{k+1}\vec{v}\right)\cdot
\vec{n}\right\|_{0,\partial\Omega} \leq m_1 h^{k+1/2}|\vec{v}|_{k+1},}
\]
\revise{where $m_1= \sqrt{C_1\left(c_1^2 + c_3\right)}$. }
\end{proof}
\begin{lem}\label{Cor4}
Let $\{\tau_{h}\}$, $0<h\leq 1$ be a family of quasiuniform meshes of $\Omega$, and $\vec{v}\in [H^{k+2}(\Omega)]^d$. Then, there exists a positive constant, $m_2$, such that 
\begin{eqnarray*}
\|\nabla\cdot(\vec{v}-\Pi^{k+1}_h\vec{v})\|_{\partial\Omega}\leq m_2 h^{k+1/2}|\vec{v}|_{k+2}.
\end{eqnarray*}
where $\Pi^{k+1}_h$ is the Raviart-Thomas interpolation operator defined in Theorem~\ref{pro}.
\end{lem}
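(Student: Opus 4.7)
My plan is to mimic the structure of the proof of Lemma~\ref{cor:inverse_trace_quasi_approx}, inserting the Lagrange interpolant $L^{k+1}_h\vec{v}$ so that the two resulting pieces can be treated by different tools. First I would apply the triangle inequality to obtain
\[
\|\nabla\cdot(\vec{v}-\Pi^{k+1}_h\vec{v})\|_{0,\partial\Omega} \leq \|\nabla\cdot(\vec{v}-L^{k+1}_h\vec{v})\|_{0,\partial\Omega} + \|\nabla\cdot(L^{k+1}_h\vec{v}-\Pi^{k+1}_h\vec{v})\|_{0,\partial\Omega},
\]
and then bound each contribution separately.

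For the first term, because $\nabla\cdot(\vec{v}-L^{k+1}_h\vec{v})$ is only piecewise smooth (and not globally $H^1(\Omega)$), I would work cell-by-cell using the scaled trace inequality $\|u\|_{0,\partial T}^2 \leq C(h^{-1}\|u\|_{0,T}^2 + h|u|_{1,T}^2)$, valid on shape-regular simplices for $u\in H^1(T)$. Combining this with the local $CG_{k+1}$ interpolation estimates $\|\vec{v}-L^{k+1}_h\vec{v}\|_{j,T} \leq Ch^{k+2-j}|\vec{v}|_{k+2,T}$ (for $j=1,2$, requiring $k\geq 1$ as assumed) yields $\|\nabla\cdot(\vec{v}-L^{k+1}_h\vec{v})\|_{0,\partial T}^2 \leq Ch^{2k+1}|\vec{v}|_{k+2,T}^2$, and summing over $T\in\partial\tau_h$ bounds the first term by $Ch^{k+1/2}|\vec{v}|_{k+2}$.

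For the second term, I would exploit that $\nabla\cdot L^{k+1}_h\vec{v}$ is piecewise polynomial of degree $k$ while $\nabla\cdot\Pi^{k+1}_h\vec{v}\in DG_k(\Omega,\tau_h)$ by Remark~\ref{1234}, so their difference lies in $DG_k(\Omega,\tau_h)$. Corollary~\ref{cor:inverse_trace_quasi} then gives $\|\nabla\cdot(L^{k+1}_h\vec{v}-\Pi^{k+1}_h\vec{v})\|_{0,\partial\Omega}^2 \leq \frac{\gamma_1(k)}{h}\|\nabla\cdot(L^{k+1}_h\vec{v}-\Pi^{k+1}_h\vec{v})\|_0^2$. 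Inserting $\pm\vec{v}$ and combining the triangle inequality with the local $CG_{k+1}$ interpolation estimate and the $RT_{k+1}$ divergence error bound from Theorem~\ref{pro} would control the $L^2$ divergence error by $Ch^{k+1}$ times a seminorm of $\vec{v}$, producing an overall bound of order $h^{k+1/2}|\vec{v}|_{k+2}$ for this term as well.

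Adding the two contributions and using $0<h\leq 1$ to replace $h^{k+1/2}$ by $h^k$ would yield the stated inequality with $m_2$ independent of $h$. The main technical point I expect to wrestle with is the element-wise trace step: unlike Lemma~\ref{cor:inverse_trace_quasi_approx}, where the global $H^1$ trace theorem suffices for the continuous piece, here $\nabla\cdot(\vec{v}-L^{k+1}_h\vec{v})$ is only piecewise $H^1(\Omega)$, so a shape-regular, scaled trace inequality must be applied on each boundary simplex and then summed. The polygonal/polyhedral assumption on $\Omega$ enters at precisely this point by ensuring that the union of traces $\partial T\cap\partial\Omega$ over $T\in\partial\tau_h$ covers $\partial\Omega$ exactly, while quasiuniformity supplies uniform shape-regularity constants.
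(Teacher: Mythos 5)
Your argument is correct, but it takes a genuinely different route from the paper. The paper inserts a $C^1$-conforming Argyris-type interpolant $\pi_h\vec{v}\in[ARG_{2^d+1}(\Omega,\tau_h)]^d\subset[H^2(\Omega)]^d$ of \emph{fixed} degree $2^d+1$, so that $\nabla\cdot(\vec{v}-\pi_h\vec{v})$ is globally in $H^1(\Omega)$ and the global trace theorem applies (giving $\mathcal{O}(h^{k})$ for that piece), while $\nabla\cdot(\pi_h\vec{v}-\Pi^{k+1}_h\vec{v})\in DG_{2^d}(\Omega,\tau_h)$ is handled by Corollary~\ref{cor:inverse_trace_quasi}; the fixed degree of the intermediate space is exactly why the hypothesis $1\leq k\leq 2^d$ appears. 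You instead insert the degree-$(k+1)$ Lagrange interpolant $L^{k+1}_h\vec{v}$, accept that $\nabla\cdot(\vec{v}-L^{k+1}_h\vec{v})$ is only piecewise $H^1$, and compensate with the standard element-wise scaled trace inequality $\|u\|_{0,\partial T}^2\leq C\left(h^{-1}\|u\|_{0,T}^2+h|u|_{1,T}^2\right)$ on shape-regular simplices, together with local $H^1$ and $H^2$ interpolation estimates; the second piece lies in $DG_k(\Omega,\tau_h)$ (via Remark~\ref{1234}) and is treated with the same inverse trace inequality as in the paper. What your route buys: it avoids the Argyris construction entirely, removes the artificial cap $k\leq 2^d$ (the paper needs a remark and higher-order $C^1$ elements to lift it), and in fact delivers the sharper rate $h^{k+1/2}$ rather than $h^{k}$, since both of your terms are $\mathcal{O}(h^{k+1/2})$; the cost is invoking a local trace inequality not stated in the paper (its inverse trace result \eqref{2.7} covers only polynomials), which is standard but should be cited or proved by scaling. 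Two small polish points: the quasiuniformity hypothesis lets you replace $h_T$ by $h$ uniformly in the local estimates, so state that explicitly when you sum over $T\in\partial\tau_h$; and bounding $\|\nabla\cdot(\vec{v}-\Pi^{k+1}_h\vec{v})\|_0$ via the $\|\cdot\|_{\text{div}}$ estimate of Theorem~\ref{pro} drags in $|\vec{v}|_{k+1}$, which is not dominated by $|\vec{v}|_{k+2}$; using the commuting property that $\nabla\cdot\Pi^{k+1}_h\vec{v}$ is the $L^2$-projection of $\nabla\cdot\vec{v}$ onto $DG_k(\Omega,\tau_h)$ gives $\|\nabla\cdot(\vec{v}-\Pi^{k+1}_h\vec{v})\|_0\leq Ch^{k+1}|\vec{v}|_{k+2}$ and matches the stated form of the bound exactly (the paper's own proof glosses the same point).
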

\begin{proof}\revise{
For any $T\in\tau_h$, we have
$\nabla\cdot\left(\vec{v}-\Pi_h^{k+1}\vec{v}\right)\in H^{k+1}(T)$ and, similar to Inequality \eqref{Trace-Ineq}, we have a constant $C_2$, independent of $h$ such that the following inequality holds for every triangle $T\in\tau_h$,}
\revise{\begin{eqnarray*}
\left\|\nabla\cdot\left(\vec{v} -
\Pi_h^{k+1}\vec{v}\right)\right\|^2_{0,\partial T}\leq C_2\left(h\|\nabla\nabla\cdot \left(\vec{v}-\Pi^{k+1}_h\vec{v}\right)\|_{0,T}^2+h^{-1}\|\nabla\cdot\left(\vec{v}-\Pi^{k+1}_h\vec{v}\right)\|_{0,T}^2\right).
\end{eqnarray*}}
\revise{Using Theorem~\ref{pro}, we have the error estimate }
\revise{\begin{eqnarray}\label{key4}
	h^{-1}\|\nabla\cdot\left(\vec{v}-\Pi^{k+1}_h\vec{v}\right)\|_{0,T}^2\leq c_2^2 h^{2k+1}|\vec{v}|^2_{k+2,T},
\end{eqnarray}}
\revise{where $c_2$ is defined in Theorem~\ref{pro}. By \cite[Proposition 2.5.2]{MR3097958} and \cite[Equation 2.5.15]{MR3097958}, on a triangle $T$, we have $\nabla\cdot\Pi_h^{k+1}\vec{v}=\mathscr{I}^k_h\nabla\cdot\vec{v}$, where $\mathscr{I}^k_h$ is the $L^2$-projection on $P_k(T)$. Then, \cite[Theorem 18.16]{MR4242224} gives}
\revise{\begin{eqnarray}\label{key3}
h\|\nabla\nabla\cdot \left(\vec{v}-\Pi^{k+1}_h\vec{v}\right)\|_{0,T}^2=h\|\nabla\left(\nabla\cdot\vec{v}-\mathscr{I}_h^{k}\nabla\cdot\vec{v}\right)\|_{0,T}^2\leq c_4 h^{2k+1}|\vec{v}|^2_{k+2,T},
\end{eqnarray}}
\revise{where $c_4$ is a positive constant independent of $h$.  Summing over all elements, $T$, with $\partial T\cap \partial\Omega\neq \emptyset$ and taking the square root gives }
\revise{	\begin{eqnarray*}
		\|\nabla\cdot(\vec{v}-\Pi^{k+1}_h\vec{v})\|_{\partial\Omega}\leq m_2 h^{k+1/2}|\vec{v}|_{k+2},
	\end{eqnarray*}}
\revise{where $m_2= \sqrt{C_2\left(c_2^2 + c_4\right)}$. }
\end{proof}

\section{Continuum Analysis}
\label{sec:continuum}

Consider the fourth-order problem \eqref{eq:mod_biharmonic} with suitable boundary conditions (discussed below),
\begin{equation}\label{eq2}
\Delta^2 u- c_0\Delta u+c_1u = f \quad \text{ in } \Omega,
\end{equation}
where $\Omega\subset \mathbb{R}^d$, $d\in\{2,3\}$ is a bounded, Lipschitz, and connected domain with outward pointing normal $\vec{n}$, and $c_0$ and $c_1$ are nonnegative constants. Define $V=\{v\in H^1(\Omega)\ |\ \Delta v \in L^2(\Omega) \}$ with dual space $V^*$, and assume that $f\in V^*$.  Multiplying by a test function, $v\in V$, integration by parts yields
\begin{equation}\label{eq:int_by_parts}
\int_\Omega \Delta u\Delta v + c_0\nabla u \cdot \nabla v + c_1 uv + \int_{\partial\Omega}v(\nabla\Delta u-c_0\nabla u)\cdot\vec{n}-\int_{\partial\Omega}\Delta u \nabla{v}\cdot\vec{n} = \int_\Omega fv.
\end{equation}

Since \eqref{eq2} is a fourth-order problem, we require two boundary conditions on any segment of $\partial \Omega$. Here, we focus on the boundary conditions that arise naturally from \eqref{eq:int_by_parts}:
\begin{alignat}{3}
u&=0,\quad &\Delta u&=0,\quad &&\text{on}\ \Gamma_0,\label{55}\\    
u&=0, &\frac{\partial u}{\partial n}&=0,&& \text{on}\ \Gamma_1, \ \label{!!}\\
\frac{\partial (\Delta u-c_0u)}{\partial n}&=0, & \Delta u&=0, && \text{on}\ \Gamma_2,\label{!!!}\\
\frac{\partial (\Delta u-c_0u)}{\partial n}&=0, &\frac{\partial u}{\partial n}&=0, && \text{on}\ \Gamma_3\label{!!!!}.
\end{alignat}	
Note that we consider homogeneous boundary conditions here,  but the results hold true for nonhomogeneous boundary conditions if the traces of these quantities are smooth enough on $\partial\Omega$, using standard techniques (cf.~\cite{MR851383}) to transform the inhomogeneous boundary conditions \eqref{55}-\eqref{!!!!} into homogeneous ones. We note that the commonly-considered case of clamped boundary conditions corresponds to $\Gamma_1$ in this classification.
Under suitable assumptions on $c_0$ and $c_1$, we can prove a variety
of results on the well-posedness of the resulting variational problems in the
standard Hilbert space setting.
\begin{lem}\label{lem:hilbert}
	Equip $V$  with the inner product
	\begin{equation*}
	(u,v)_V=\int_{\Omega}uv+\nabla u\cdot\nabla v+\Delta u\Delta v.
	\end{equation*}
	The normed space $\left(V,\|.\|_V\right)$ is a Hilbert space. 
\end{lem}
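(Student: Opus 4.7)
The plan is to verify the inner-product axioms and then establish completeness by a standard limiting argument, leveraging completeness of $H^1(\Omega)$ and $L^2(\Omega)$ together with the fact that distributional derivatives are continuous under $L^2$ convergence.

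First I would check that $(\cdot,\cdot)_V$ really is an inner product. Bilinearity and symmetry are immediate from the corresponding properties of the $L^2$ inner products appearing in each of the three summands. For positive definiteness, observe that $(v,v)_V = \|v\|_0^2 + \|\nabla v\|_0^2 + \|\Delta v\|_0^2 \geq \|v\|_0^2$, so $(v,v)_V = 0$ forces $v = 0$ in $L^2(\Omega)$, hence in $V$. This also shows $\|v\|_V^2 = \|v\|_{H^1}^2 + \|\Delta v\|_0^2$, which will be the key inequality driving the completeness argument.

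For completeness, let $\{v_n\}\subset V$ be a Cauchy sequence with respect to $\|\cdot\|_V$. Because $\|v_m - v_n\|_{H^1} \leq \|v_m - v_n\|_V$ and $\|\Delta v_m - \Delta v_n\|_0 \leq \|v_m - v_n\|_V$, the sequence $\{v_n\}$ is Cauchy in $H^1(\Omega)$ and $\{\Delta v_n\}$ is Cauchy in $L^2(\Omega)$. By completeness of these standard spaces there exist $v \in H^1(\Omega)$ and $w \in L^2(\Omega)$ such that $v_n \to v$ in $H^1(\Omega)$ and $\Delta v_n \to w$ in $L^2(\Omega)$.

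The main (and only slightly substantive) step is to identify $w$ with $\Delta v$ so that $v \in V$. For any $\varphi \in C_c^\infty(\Omega)$, integration by parts (valid since each $v_n \in H^1(\Omega)$ and $\Delta v_n \in L^2(\Omega)$, or more simply via the distributional definition of $\Delta$) gives
\begin{equation*}
\int_\Omega (\Delta v_n)\,\varphi = \int_\Omega v_n\,\Delta \varphi .
\end{equation*}
Passing to the limit on both sides using the $L^2$ convergences established above yields $\int_\Omega w\,\varphi = \int_\Omega v\,\Delta \varphi$, so $\Delta v = w$ in $\mathcal{D}'(\Omega)$. Since $w \in L^2(\Omega)$, this identifies $\Delta v \in L^2(\Omega)$, hence $v \in V$. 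Finally,
\begin{equation*}
\|v_n - v\|_V^2 = \|v_n - v\|_{H^1}^2 + \|\Delta v_n - w\|_0^2 \longrightarrow 0,
\end{equation*}
so $v_n \to v$ in $V$. I expect no real obstacle; the only point requiring care is ensuring the identification $\Delta v = w$ is done distributionally rather than pointwise, since we have no pointwise control on the Laplacian at this stage.
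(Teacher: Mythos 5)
Your proof is correct: the inner-product axioms are immediate, and your completeness argument --- extracting limits $v_n \to v$ in $H^1(\Omega)$ and $\Delta v_n \to w$ in $L^2(\Omega)$ and then identifying $w = \Delta v$ distributionally via $\int_\Omega (\Delta v_n)\,\varphi = \int_\Omega v_n\,\Delta\varphi$ for $\varphi \in C_c^\infty(\Omega)$ --- is exactly the standard graph-norm completeness argument. The paper states this lemma without proof, treating it as a standard fact, so there is nothing to contrast with; your write-up supplies precisely the argument the paper implicitly relies on, and the one point you flag (making the identification $\Delta v = w$ in $\mathcal{D}'(\Omega)$ rather than pointwise) is handled correctly.
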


Defining $V_0=\{v\in V\ | \  v=0 \ \text{on}\ \Gamma_0\cup\Gamma_1\ \text{and} \ \frac{\partial v}{\partial n} =0 \ \text{on} \ \Gamma_1\cup\Gamma_3 \}$, we first consider the $H^2$-conforming weak form of \eqref{eq2}, requiring $u\in V_0$ such that 
\begin{equation}\label{wellposed}
a(u,v) = \int_{\Omega}fv, \quad\forall v\in V_0,
\end{equation}
where the bilinear form $a$ is defined as
\begin{equation}
a(u,v)=\int_{\Omega}\Delta u\Delta v +c_0\nabla u\cdot\nabla v+c_1uv.
\end{equation} 

Using standard tools, it is straightforward to show that the weak form in \eqref{wellposed} is well-posed (i.e., that $a(u,v)$ is coercive and continuous on $V_0$) when $c_0,c_1 > 0$, for any choice of boundary conditions. This can be extended to cover the case of $c_0 = 0$ if $\Gamma_2 = \emptyset$, using the remaining boundary conditions to show that there is a constant, $C$, such that $\|\nabla u\|_0^2 \leq C\left(\|u\|_0^2 + \|\Delta u\|_0^2\right)$.  If $c_1 = 0$ for $c_0> 0$, then well-posedness requires that $\Gamma_0 \cup \Gamma_1 \neq \emptyset$, in order to be able to apply the standard Poincar\'e inequality.  If both $c_0 = c_1 = 0$, then both $\Gamma_0 \cup \Gamma_1 \neq \emptyset$ and $\Gamma_2 = \emptyset$ are required to show well-posedness. 

\begin{rmk}
	
	When $\partial\Omega=\Gamma_2$ (the analogous case to full Neumann boundary conditions), if $c_0=0$, then $a(u,v)$ is not coercive on $V_0 = V$. We illustrate this by considering the harmonic function $v=e^{-kx}\cos(ky)$, for which $a(v,v)=c_1\|v\|_0^2=\mathcal{O}({k^{-2}})$. On the other hand, $\|v\|^2_V=\mathcal{O}({k^{-2}})+\mathcal{O}(1)$. Thus, as $k$ gets larger, the implied bound on the coercivity constant goes to zero. Thus, in what follows, $c_0$ is restricted to be positive if $\Gamma_2\subseteq \partial\Omega$.
	
\end{rmk}

We now turn our attention to the mixed formulation at the continuum level. Letting $\vec{v}=\nabla u \ \text{and} \ \vec{\alpha}=\nabla \nabla \cdot\vec{v}-c_0\vec{v}$, \eqref{eq2} is equivalent to the following system of first- and second-order PDEs. 
\begin{eqnarray}
\nabla\cdot \vec{\alpha}+c_1u=f, \label{eq:div_alpha}\\
\vec{\alpha}-\nabla \nabla\cdot \vec{v}+c_0\vec{v}=0, \label{eq:graddiv_v}\\
\vec{v}-\nabla u=0.\label{eq:grad_u}
\end{eqnarray}
Considering the relevant spaces and applying the boundary conditions given in \eqref{55}-\eqref{!!!!}, the weak form of \eqref{eq:div_alpha}-\eqref{eq:grad_u} is to find the triple 
$(u,\vec{v}, \vec{\alpha})\in L^2(\Omega)\times H^{\Gamma_1\cup\Gamma_3}_{0}(\text{div};\Omega)\times H_{0}^{\Gamma_2\cup\Gamma_3}(\text{div};\Omega)$ such that 
\begin{eqnarray}
\int_{\Omega}^{}\nabla \cdot \vec{\alpha } \phi+ c_1u\phi&=&\int_{\Omega}^{ }f\phi,\\
\int_{\Omega}^{}\vec{\alpha }\cdot\vec{\psi}+\nabla\cdot\vec{v}\,\nabla\cdot\vec{\psi}+c_0\int_{\Omega}\vec{v}\cdot\vec{\psi}&=&0, \label{eq:weak_psi}\\
\int_{\Omega}^{}\vec{\beta}\cdot\vec{v}+\int_{\Omega}u\nabla\cdot\vec{\beta}&=&0,\label{3.16}
\end{eqnarray}
$\forall(\phi, \vec{\psi}, \vec{\beta}) \in L^2(\Omega)\times H_{0}^{\Gamma_1\cup\Gamma_3}(\text{div};\Omega) \times H_{0}^{\Gamma_2\cup\Gamma_3}(\text{div};\Omega)$,
where, for $\Gamma \subset \partial\Omega$,
\begin{equation*}
H_{0}^{\Gamma}(\text{div};\Omega) =\left\{\vec{v}\in H(\text{div};\Omega)\ | \ \vec{v}\cdot\vec{n}=0\ \text{on} \ \Gamma \right\}.
\end{equation*}
We note that this formulation strongly imposes Dirichlet boundary conditions on $\vec{v}$ and $\alpha$, but weakly imposes those on $u$ and $\Delta u$.

This weak form is equivalent to the saddle-point problem of finding $(u,\vec{v}, \vec{\alpha})\in L^2(\Omega)\times H_{0}^{\Gamma_1\cup\Gamma_3}(\text{div};\Omega)\times H_{0}^{\Gamma_2\cup\Gamma_3}(\text{div};\Omega)$ such that 
\begin{eqnarray}
a\big((u,\vec{v}), (\phi, \vec{\psi})\big)+ {b}\big((\phi, \vec{\psi}),\vec{\alpha} \big)&=&F(\phi),\label{454}\\
{b}((u,\vec{v} ), \vec{\beta}) &=&0,\label{444}
\end{eqnarray}
$\forall(\phi, \vec{\psi}, \vec{\beta}) \in L^2(\Omega)\times H_{0}^{\Gamma_1\cup\Gamma_3}(\text{div};\Omega) \times H_{0}^{\Gamma_2\cup\Gamma_3}(\text{div};\Omega)$, where the linear and bilinear forms $a, \ b, \ \text{and} \ F$ are given by
\begin{eqnarray}
a\big((u,\vec{v}), (\phi, \vec{\psi})\big)&=&c_0\int_{\Omega}\vec{v}\cdot\vec{\psi} +\int_{\Omega}\nabla\cdot\vec{v}\,\nabla\cdot\vec{\psi}+c_1\int_{\Omega}u\phi,\label{28}\\
b\big((u,\vec{v}), \vec{\beta}\big)&=&\int_{\Omega}\vec{\beta}\cdot\vec{v}+u\nabla\cdot \vec{\beta},\label{29}\\
F(\phi)&=&\int_{\Omega}f\phi.\label{30}
\end{eqnarray}
As noted above, the boundary conditions imposed can have significant effects on the well-posedness of the problem. In particular, we now show that the mixed-formulation is well-posed under combinations of assumptions on $c_0, \ c_1$, and the boundary conditions. 
\begin{thm}\label{thm6}
	Let $\partial\Omega=\Gamma_0\cup\Gamma_3$. The saddle-point problem \eqref{454}-\eqref{444} has a unique solution for any  $c_0\geq 0 \ \text{and}\ c_1>0$, and for $c_1 \ge 0$ if $\Gamma_0$ is nonempty.
\end{thm}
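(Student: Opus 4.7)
The plan is to verify the hypotheses of Brezzi's theorem for the saddle-point system \eqref{454}--\eqref{444}, applied to the product space $W \times M$ with $W = L^2(\Omega) \times H_0^{\Gamma_3}(\text{div};\Omega)$ and $M = H_0^{\Gamma_3}(\text{div};\Omega)$ (noting that, when $\partial\Omega = \Gamma_0 \cup \Gamma_3$, the boundary sets reduce to $\Gamma_1 \cup \Gamma_3 = \Gamma_2 \cup \Gamma_3 = \Gamma_3$). Continuity of $a$, $b$ and $F$ on these spaces is immediate from Cauchy--Schwarz and the definitions of the $L^2$- and $H(\text{div})$-norms, so the real content is the coercivity of $a$ on the kernel of $b$ together with an inf-sup condition on $b$.

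For the inf-sup condition on $b$, given $\vec{\beta} \in H_0^{\Gamma_3}(\text{div};\Omega)$ I would simply take the explicit test pair $u = \nabla\cdot\vec{\beta} \in L^2(\Omega)$ and $\vec{v} = \vec{\beta} \in H_0^{\Gamma_3}(\text{div};\Omega)$. Then $b((u,\vec{v}),\vec{\beta}) = \|\vec{\beta}\|_0^2 + \|\nabla\cdot\vec{\beta}\|_0^2 = \|\vec{\beta}\|_{\text{div}}^2$, while $\|u\|_0^2 + \|\vec{v}\|_{\text{div}}^2 \le 2\|\vec{\beta}\|_{\text{div}}^2$, giving an inf-sup constant of at least $1/\sqrt{2}$.

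The main work lies in the kernel coercivity. First I would characterize $K := \{(u,\vec{v}) \in W : b((u,\vec{v}),\vec{\beta}) = 0 \ \forall \vec{\beta} \in M\}$. Testing against compactly supported $\vec{\beta}$ recovers $\vec{v} = \nabla u$ in the distributional sense, which promotes $u$ to $H^1(\Omega)$; restoring the boundary contribution in the integration-by-parts identity $\int \vec{\beta}\cdot\vec{v} + u \nabla\cdot\vec{\beta} = \int_{\partial\Omega} u \vec{\beta}\cdot\vec{n}$ and using that $\vec{\beta}\cdot\vec{n}$ is free on $\Gamma_0$ while $\vec{\beta}\cdot\vec{n} = 0$ on $\Gamma_3$ forces $u = 0$ on $\Gamma_0$. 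Consequently, on $K$ we have $a((u,\vec{v}),(u,\vec{v})) = c_0\|\nabla u\|_0^2 + \|\Delta u\|_0^2 + c_1\|u\|_0^2$, and I need to dominate $\|u\|_0^2 + \|\nabla u\|_0^2 + \|\Delta u\|_0^2$. The key identity is $\|\nabla u\|_0^2 = -\int_\Omega u \Delta u$, valid because the boundary term splits into the $\Gamma_0$ piece (where $u=0$) and the $\Gamma_3$ piece (where $\nabla u \cdot \vec{n} = \vec{v}\cdot\vec{n} = 0$). This yields $\|\nabla u\|_0^2 \le \tfrac{1}{2}(\|u\|_0^2 + \|\Delta u\|_0^2)$, which, when $c_1 > 0$, already gives coercivity for any $c_0 \geq 0$. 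When $c_1 = 0$ but $\Gamma_0 \neq \emptyset$, I would invoke the Poincaré inequality on the subspace of $H^1$-functions vanishing on $\Gamma_0$ to bound $\|u\|_0 \le C_P \|\nabla u\|_0$; combining this with the previous identity in the form $\|\nabla u\|_0^2 \le \|u\|_0 \|\Delta u\|_0 \le C_P \|\nabla u\|_0 \|\Delta u\|_0$ yields $\|\nabla u\|_0 \le C_P \|\Delta u\|_0$ and then $\|u\|_0 \le C_P^2\|\Delta u\|_0$, so all three terms are controlled by $\|\Delta u\|_0^2$ alone.

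With coercivity on $K$ and the inf-sup bound in hand, Brezzi's theorem delivers existence of a unique solution $(u,\vec{v},\vec{\alpha})$ together with the usual a priori bound in terms of $\|f\|_{L^2}$. I expect the delicate step to be the kernel characterization in the borderline case $c_1 = 0$: the interplay between extracting $\vec{v} = \nabla u$ distributionally, lifting $u$ into $H^1$, and tracking the boundary conditions on $\Gamma_0$ and $\Gamma_3$ must be handled carefully so that the integration-by-parts identity used to bound $\|\nabla u\|_0$ remains valid and the Poincaré constant is well-defined.
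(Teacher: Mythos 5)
Your proposal is correct, and the continuity and inf-sup steps (taking $u=\nabla\cdot\vec{\beta}$, $\vec{v}=\vec{\beta}$, constant $1/\sqrt{2}$) coincide with the paper's. The coercivity argument, however, follows a genuinely different route. You characterize the kernel explicitly: testing with compactly supported fields to get $\vec{v}=\nabla u$ distributionally, promoting $u$ to $H^1(\Omega)$, and extracting $u=0$ on $\Gamma_0$ from the boundary pairing; you then run the argument through $\|\nabla u\|_0^2=-\int_\Omega u\,\Delta u$ and, in the case $c_1=0$, the Poincar\'e--Friedrichs inequality for functions vanishing on $\Gamma_0$. The paper never leaves $L^2(\Omega)\times H_0^{\Gamma_3}(\mathrm{div};\Omega)$: it obtains the same key identity $\|\vec{v}\|_0^2=-\int_\Omega u\,\nabla\cdot\vec{v}$ simply by inserting $\vec{\alpha}=\vec{v}$ into the kernel condition (legitimate because $\vec{v}$ lives in the multiplier space when $\partial\Omega=\Gamma_0\cup\Gamma_3$), and for $c_1=0$ it constructs a tailored multiplier $\vec{\alpha}=\mu\vec{v}+\vec{\alpha}_m$, with $\vec{\alpha}_m$ solving an auxiliary mixed Poisson problem whose stability bound replaces your Poincar\'e inequality, finishing with Cauchy--Schwarz/Young manipulations. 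The trade-off: your route is more elementary and transparent (classical Poincar\'e, no auxiliary saddle-point problem), but it carries the technical burden you yourself flag --- the normal trace of an $H(\mathrm{div})$ field is only in $H^{-1/2}(\partial\Omega)$, so localizing the duality pairing to $\Gamma_0$ versus $\Gamma_3$ to conclude $u|_{\Gamma_0}=0$, and the subsequent splitting of the boundary term in your integration-by-parts identity, require a careful density/extension argument; the paper's construction sidesteps any regularity or trace claim about kernel elements, at the cost of invoking well-posedness of the mixed Poisson problem. Note also that your central identity can be had in one line by choosing $\vec{\beta}=\vec{v}$ in the kernel condition, which would let you drop the boundary-splitting step entirely and keep only the Poincar\'e argument as the genuinely new ingredient in the $c_1=0$ case.
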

\begin{proof}
	We verify the standard Brezzi conditions for well-posedness \cite{MR3097958}. Continuity of $a$, $b$, and $F$ in the product norm on $L^2(\Omega)\times H(\text{div};\Omega)\times H(\text{div};\Omega)$ is straightforward.
	
	We next show that the bilinear form $a\big((u,\vec{v}), (\phi, \vec{\psi})\big)$ is coercive on the set 
	\begin{equation*}
	\eta=\{(u,\vec{v})\in L^2(\Omega)\times H_0^{\Gamma_3}(\text{div};\Omega)\ | \  b\big( (u,\vec{v}),\vec{\alpha}\big)=0, \ \ \forall \vec{\alpha}\in H_0^{\Gamma_3}(\text{div};\Omega)\}.
	\end{equation*}  
	\revise{Since $\vec{v}$ is also in $H_0^{\Gamma_3}(\text{div};\Omega)$,} the kernel condition implies that $b\big((u,\vec{v}),\vec{v}\big)=0$ for any $(u,\vec{v})$ in $\eta$, which implies that $\|\vec{v}\|_0^2=-\int_{\Omega}^{ }u\nabla\cdot\vec{v}\leq \frac{1}{2}\left(\|u\|_0^2+\|\nabla\cdot\vec{v}\|_0^2\right)$. Then
	\begin{align*} 
		a\big((u,\vec{v}),(u,\vec{v})\big)	&=	c_0\|\vec{v}\|_0^2+\frac{1}{3}\big(\|\nabla\cdot\vec{v}\|_0^2+c_1\|u\|_0^2\big)+\frac{2}{3}\big(\|\nabla\cdot\vec{v}\|_0^2+c_1\|u\|_0^2\big)\notag\\
		&\geq c_0\|\vec{v}\|_0^2+\frac{2\min{\{1,c_1\}}}{3}\|\vec{v}\|_0^2+\frac{2\min{\{1,c_1\}}}{3}\big(\|\nabla\cdot\vec{v}\|_0^2+\|u\|_0^2\big)\notag\\
		&\geq  \frac{2\min\{1,c_1\}}{3}\left(\|\vec{v}\|^2_{\text{div}}+\|u\|_0^2\right),
	\end{align*}
	where $\|\vec{v}\|^2_{\text{div}}=\|\vec{v}\|^2_0+\|\nabla\cdot\vec{v}\|_0^2$.
	
	If $\Gamma_0$ is nonempty and $c_1=0$, then for a given $(u,\vec{v})$, we choose $\vec{\alpha}=\mu\vec{v}+\vec{\alpha}_m$, where $\mu$ is a positive constant to be specified below, \revise{and $\vec{\alpha}_m\in H^{\Gamma_3}_0(\text{div};\Omega)$ is the solution} of the standard mixed Poisson problem, 
	\begin{eqnarray}
	\int_{\Omega}\delta\nabla\cdot\vec{\alpha}_m&=&\int_{\Omega}u\delta, \ \ \forall \delta \in L^2(\Omega),\label{eq2.44}\\
	\int_{\Omega}\vec{\alpha}_m\cdot\vec{\beta}+\phi\nabla \cdot\vec{\beta}&=&0, \ \ \forall \vec{\beta} \in H_0^{\Gamma_3}(\Omega;\text{div}),\label{eq2.45}
	\end{eqnarray}
	which is well-posed with $\|\vec{\alpha}_m\|^2_{\text{div}}+\|\phi\|_0^2\leq \Lambda\|u\|_0^2$, where $\Lambda$ is a positive constant that depends on the coercivity and continuity constants and the inf-sup conditions for the mixed Poisson problem~\cite{10.1093/climsys/dzw005,MR3097958}. Moreover, the choice of $\delta=u$ in Equation \eqref{eq2.44} implies that $\|u\|_0^2=\int_{\Omega}u\nabla\cdot\vec{\alpha}_m$. Thus, for every $(u,\vec{v})\in \eta$, we have     
	\[
	b((u,\vec{v}),\vec{\alpha}) = \mu\|\vec{v}\|_0^2+\|u\|_0^2+\int_{\Omega}\vec{\alpha}_m\cdot\vec{v}+\mu\int_{\Omega}u\nabla\cdot\vec{v}=0.
	\]
	Rearranging terms and using the Cauchy-Schwarz and Young's inequalities, we get
	\[
	\frac{\mu}{2}\left(\frac{2\mu}{k_1} \|u\|_0^2+\frac{k_1}{2\mu}\|\nabla\cdot\vec{v}\|_0^2\right)+\frac{1}{2}\left(\frac{2}{k_2}\|u\|_0^2+\frac{k_2\Lambda}{2}\|\vec{v}\|_0^2\right)\geq\mu\|\vec{v}\|_0^2+\|u\|^2_0
	\]
	for arbitrary $k_1 > 0, k_2 > 0$, which can be further rearranged to yield
	\[
	\frac{k_1}{4}\|\nabla\cdot\vec{v}\|_0^2\geq\left(\mu-\frac{k_2\Lambda}{4}\right)\|\vec{v}\|_0^2+\left(1-\frac{\mu^2}{k_1}-\frac{1}{k_2}\right)\|u\|_0^2.
	\]
	\revise{Choosing the constants $k_1=4\mu^2$, $k_2=4$ and $\mu=\Lambda+\frac{1}{2}$ results in the coercivity condition that
	$a\left((u,\vec{v}),(u,\vec{v})\right)=\|\nabla\cdot\vec{v}\|_0^2\geq K \left(\|\vec{v}\|_{\text{div}}^2+\|u\|_0^2\right)$, where $K=\frac{1}{2\left(\Lambda+\frac{1}{2}\right)^2}$.}
	
	Finally, we establish the necessary inf-sup condition, that
	\begin{equation*}
	\sup_{\substack{(u,\vec{v})\in L^2(\Omega)\times H_0^{\Gamma_3}(\text{div};\Omega)\\ (u,\vec{v})\neq (0,\vec{0})}} \frac{b\big((u,\vec{v}),\vec{\alpha}\big)}{\sqrt{\|u\|_0^2+\|\vec{v}\|^2_{{\text{div}}}}}\geq \frac{1}{\sqrt{2}} \|\vec{\alpha}\|_{{\text{div}}}, \ \forall \vec{\alpha }\in H_0^{\Gamma_3}(\text{div};\Omega) 
	\end{equation*}
	The choice $u=\nabla\cdot\vec{\alpha}, \ \vec{v}=\vec{\alpha}$ completes the proof, noting this is compatible with $\partial\Omega=\Gamma_0\cup\Gamma_3$, since $u\in L^2(\Omega)$, without an essential boundary condition strongly imposed on it.
\end{proof}

\begin{corollary}\label{corr}
	Let $\partial\Omega=\Gamma_0\cup\Gamma_2\cup\Gamma_3$. The saddle-point problem \eqref{454}-\eqref{444} has a unique solution for any  $c_0> 0 \ \text{and}\ c_1>0$.
\end{corollary}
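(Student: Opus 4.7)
The plan is to verify the three Brezzi conditions for \eqref{454}--\eqref{444} on the product space $L^2(\Omega)\times H_0^{\Gamma_3}(\text{div};\Omega)\times H_0^{\Gamma_2\cup\Gamma_3}(\text{div};\Omega)$ (using that $\Gamma_1 = \emptyset$), following the same outline as Theorem~\ref{thm6}. Continuity of $a$, $b$, and $F$ in the natural product norm is immediate from Cauchy--Schwarz, exactly as before.

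The coercivity step is actually \emph{simpler} than in Theorem~\ref{thm6} because we are now assuming both $c_0>0$ and $c_1>0$. I would observe that
\[
a\big((u,\vec{v}),(u,\vec{v})\big) = c_0\|\vec{v}\|_0^2 + \|\nabla\cdot\vec{v}\|_0^2 + c_1\|u\|_0^2 \geq \min\{c_0,1,c_1\}\big(\|u\|_0^2+\|\vec{v}\|^2_{\text{div}}\big)
\]
directly, on all of $L^2(\Omega)\times H_0^{\Gamma_3}(\text{div};\Omega)$, so a fortiori on the kernel $\eta$. In particular, no appeal to the kernel condition or to the auxiliary mixed Poisson problem used in Theorem~\ref{thm6} is needed here.

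For the inf-sup condition, the key structural point to verify is a boundary-condition inclusion: since the constraint on $\vec{\alpha}$ is stricter than that on $\vec{v}$, we have $H_0^{\Gamma_2\cup\Gamma_3}(\text{div};\Omega) \subseteq H_0^{\Gamma_3}(\text{div};\Omega)$. Consequently, for any $\vec{\alpha}\in H_0^{\Gamma_2\cup\Gamma_3}(\text{div};\Omega)$, the choice $\vec{v}=\vec{\alpha}$ is an admissible element of the trial space for $\vec{v}$, and $u=\nabla\cdot\vec{\alpha}\in L^2(\Omega)$ is unconstrained. This is exactly the test pair used at the end of the proof of Theorem~\ref{thm6} and yields
\[
b\big((u,\vec{v}),\vec{\alpha}\big) = \|\vec{\alpha}\|_0^2 + \|\nabla\cdot\vec{\alpha}\|_0^2 = \|\vec{\alpha}\|^2_{\text{div}}, \qquad \sqrt{\|u\|_0^2+\|\vec{v}\|^2_{\text{div}}} \leq \sqrt{2}\,\|\vec{\alpha}\|_{\text{div}},
\]
giving the desired inf-sup bound with constant $1/\sqrt{2}$.

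There is no serious obstacle here; the only subtle point is the compatibility check in the inf-sup step, where one must be careful that the trial-space boundary condition on $\vec{v}$ is compatible with setting $\vec{v}=\vec{\alpha}$. As noted, this works in our favour precisely because adding $\Gamma_2$ to the constraints for $\vec{\alpha}$ shrinks its space but leaves the $\vec{v}$ space unchanged. The corollary then follows from the standard Brezzi theorem~\cite{MR3097958}.
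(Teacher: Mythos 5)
Your proposal is correct and follows essentially the same route as the paper: coercivity of $a$ on the whole space $L^2(\Omega)\times H_0^{\Gamma_3}(\text{div};\Omega)$ with constant $\min\{1,c_0,c_1\}$, and the inf-sup condition via the choice $u=\nabla\cdot\vec{\alpha}$, $\vec{v}=\vec{\alpha}$, justified by the inclusion $H_0^{\Gamma_2\cup\Gamma_3}(\text{div};\Omega)\subset H_0^{\Gamma_3}(\text{div};\Omega)$. Your explicit computation of $b$ and the $\sqrt{2}$ bound matches the paper's stated constant, so nothing further is needed.
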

\begin{proof}
	Under these assumptions, the bilinear form $a$ is coercive for $(u,\vec{v})\in L^2(\Omega)\times H_0^{\Gamma_3}(\text{div};\Omega)$ since  
	\begin{equation*}
	a\left((u,\vec{v}), (u,\vec{v})\right)=c_0\|\vec{v}\|_0^2+\|\nabla\cdot\vec{v}\|_0^2+c_1\|u\|_0^2 \ge \min{\{1,c_0,c_1\}}\left(\|\vec{v}\|_{\text{div}}^2 + \|u\|_0^2\right).
	\end{equation*}
 Moreover, the inf-sup condition,
	\begin{equation*}
	\sup_{\substack{(u,\vec{v})\in L^2(\Omega)\times H_0^{\Gamma_3}(\text{div};\Omega)\\ (u,\vec{v})\neq (0,\vec{0})}} \frac{b\big((u,\vec{v}),\vec{\alpha}\big)}{\sqrt{\|u\|_0^2+\|\vec{v}\|^2_{{\text{div}}}}}\geq \frac{1}{\sqrt{2}} \|\vec{\alpha}\|_{{\text{div}}}, \ \forall \vec{\alpha }\in H_0^{\Gamma_2\cup\Gamma_3}(\text{div};\Omega) ,
	\end{equation*}
	is readily shown by choosing $u=\nabla\cdot\vec{\alpha}, \ \vec{v}=\vec{\alpha}$, noting that this is allowable because $\vec{\alpha}\in H_0^{\Gamma_2\cup\Gamma_3}(\text{div};\Omega)\subset H_0^{\Gamma_3}(\text{div};\Omega)$, and $\nabla\cdot\vec{\alpha}\in L^2(\Omega)$.
\end{proof}
Solving \eqref{454}--\eqref{444} when essential boundary conditions on $\vec{v}$ are strongly imposed while $\vec{\alpha}$ is free on the boundary, i.e.~$\partial\Omega = \Gamma_1$, leads to difficulties in proving the inf-sup condition. This difficulty can easily be understood from the proof of the inf-sup condition in Theorem \ref{thm6}, in which we take $\vec{v}=\vec{\alpha}$ to provide a concrete bound on the supremum.  In this setting, we are able to prove uniqueness of the solution to the continuum mixed form of the problem under suitable regularity assumptions.

\begin{corollary}\label{co}
	Let $\partial\Omega=\Gamma_0\cup\Gamma_1\cup\Gamma_3$ \revise{and $\bar{u}$ solve \eqref{wellposed}}. The pair $(u,\vec{v})$ that solves the saddle-point problem \eqref{454}-\eqref{444} is unique for any  $c_0\geq 0 \ \text{and}\ c_1\geq0$ with \revise{$(u,\vec{v})=\left(\bar{u},\nabla \bar{u}\right)$. Moreover, $\vec{\alpha}$ is unique if $\bar{u}\in H^p(\Omega)$, ${p}\geq 4$.}
\end{corollary}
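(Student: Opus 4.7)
The plan is to prove uniqueness in two stages. I would first take the difference of any two solutions, reducing to a homogeneous saddle-point problem with $F\equiv 0$, and aim to conclude $(u,\vec{v})=(0,0)$; once this is known, the Lagrange multiplier $\vec{\alpha}$ can then be read off from the first saddle-point equation. The central observation making the whole argument go through is the set containment $H_0^{\Gamma_1\cup\Gamma_3}(\text{div};\Omega)\subset H_0^{\Gamma_3}(\text{div};\Omega)$, which simultaneously makes $\vec{v}$ an admissible test function in the constraint equation and makes the constraint applicable to $\vec{\alpha}$ when we use $(u,\vec{v})$ as the test pair in the first equation.

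Concretely, substituting $\vec{\beta}=\vec{v}$ into the constraint gives $\|\vec{v}\|_0^2+\int_\Omega u\,\nabla\cdot\vec{v}=0$, and substituting $(\phi,\vec{\psi})=(u,\vec{v})$ into the first equation, together with the resulting $b((u,\vec{v}),\vec{\alpha})=0$, yields
\[
a\big((u,\vec{v}),(u,\vec{v})\big)=c_0\|\vec{v}\|_0^2+\|\nabla\cdot\vec{v}\|_0^2+c_1\|u\|_0^2=0.
\]
If $c_0>0$ and $c_1>0$ we are done immediately; otherwise, starting from $\nabla\cdot\vec{v}=0$, I would test the constraint against $\vec{\beta}\in C_0^\infty(\Omega)^d$ to conclude $\vec{v}=\nabla u$ in the distributional sense, so $u\in H^1(\Omega)$. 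Integrating by parts in the constraint against arbitrary $\vec{\beta}\in H_0^{\Gamma_3}(\text{div};\Omega)$ then converts the identity into a boundary term that forces $u=0$ on $\Gamma_0\cup\Gamma_1$, while $\vec{v}=\nabla u\in H_0^{\Gamma_1\cup\Gamma_3}(\text{div};\Omega)$ supplies $\partial u/\partial n=0$ on $\Gamma_1\cup\Gamma_3$. Green's identity applied to the harmonic function $u$ then gives $\int_\Omega|\nabla u|^2=0$, so $u$ is constant; since $\Gamma_0\cup\Gamma_1$ is nonempty (the distinguishing feature of this corollary relative to Theorem~\ref{thm6}), $u\equiv 0$ and hence $\vec{v}=0$.

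For $\vec{\alpha}$, with $(u,\vec{v})=(0,0)$ the first saddle-point equation collapses to $b((\phi,\vec{\psi}),\vec{\alpha})=0$ for every $(\phi,\vec{\psi})\in L^2(\Omega)\times H_0^{\Gamma_1\cup\Gamma_3}(\text{div};\Omega)$. Taking $\phi=0$ and letting $\vec{\psi}$ range over $C_0^\infty(\Omega)^d\subset H_0^{\Gamma_1\cup\Gamma_3}(\text{div};\Omega)$, which is dense in $L^2(\Omega)^d$, yields $\vec{\alpha}=0$ almost everywhere. The regularity hypothesis $u\in H^p(\Omega)$, $p\geq 4$, enters to guarantee that the Lagrange multiplier actually belongs to $H_0^{\Gamma_3}(\text{div};\Omega)$ in the first place, via the explicit identification $\vec{\alpha}=\nabla(\Delta u)-c_0\nabla u$: this requires $\nabla\Delta u\in L^2(\Omega)$ and $\nabla\cdot\vec{\alpha}=\Delta^2 u-c_0\Delta u\in L^2(\Omega)$, both secured by the $H^4$ regularity.

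The main obstacle is the absence of an inf-sup condition for $\vec{\alpha}$ in this boundary configuration, as discussed in the paragraph preceding the corollary; this rules out a direct Brezzi-based argument and forces a bespoke proof combining the nesting of the two $H(\text{div})$-like spaces with density and integration-by-parts arguments. The most delicate step is the case $c_0=c_1=0$, where the bilinear form $a$ degenerates to $\|\nabla\cdot\vec{v}\|_0^2$ and one must extract the Dirichlet trace $u|_{\Gamma_0\cup\Gamma_1}=0$ from the weak gradient relation before the harmonic-function energy argument can close the gap.
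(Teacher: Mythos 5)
Your argument is correct, but it follows a genuinely different route from the paper, most notably for the multiplier. For the pair $(u,\vec{v})$, the paper simply invokes coercivity of $a$ on the kernel set $\eta$ (established as in Theorem~\ref{thm6}) together with the standard saddle-point result of Brezzi--Fortin; your computation with the test choices $\vec{\beta}=\vec{v}$ and $(\phi,\vec{\psi})=(u,\vec{v})$ is exactly that argument unrolled by hand, and it is fine --- though in the degenerate case you overcomplicate matters: once $a\big((u,\vec v),(u,\vec v)\big)=0$ gives $\nabla\cdot\vec{v}=0$, the identity $\|\vec{v}\|_0^2=-\int_\Omega u\,\nabla\cdot\vec{v}$ already yields $\vec{v}=0$ directly, with no need for the distributional-gradient, trace, and Green's-identity detour (which, while essentially sound, requires some care in splitting the $H^{-1/2}$--$H^{1/2}$ boundary pairing over pieces of $\partial\Omega$). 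Note also that both your proof and the paper's implicitly need $\Gamma_0\cup\Gamma_1\neq\emptyset$ when $c_1=0$, since otherwise constants lie in the kernel; you state this assumption explicitly, the paper inherits it silently from Theorem~\ref{thm6}. For $\vec{\alpha}$, the two proofs diverge: the paper uses the regularity $u\in H^p(\Omega)$, $p\geq 4$, to first show $\vec{v}=\nabla u$ and then, testing with $\vec{\psi}=Q\left(\vec{\alpha}-\nabla\Delta u+c_0\nabla u\right)$ for a positive weight $Q\in H_0^1(\Omega)$, to identify $\vec{\alpha}=\nabla\Delta u-c_0\nabla u$ explicitly; uniqueness follows from this characterization. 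You instead subtract two solutions and use that $\left[C_0^\infty(\Omega)\right]^d\subset H_0^{\Gamma_1\cup\Gamma_3}(\operatorname{div};\Omega)$ is dense in $\left[L^2(\Omega)\right]^d$ to conclude the difference of multipliers vanishes. This is valid --- uniqueness of the multiplier only needs injectivity of the constraint operator, not the inf-sup bound that fails at the discrete level --- and it in fact proves a stronger statement than the corollary, since no regularity on $u$ is used. Your closing rationalization of the hypothesis $u\in H^p(\Omega)$, $p\geq4$ (that it guarantees membership of $\vec{\alpha}$ in the space) is not how the paper uses it: in the paper the regularity buys the explicit formula for $\vec{\alpha}$, which is additional information beyond bare uniqueness; in your proof the hypothesis is simply not needed.
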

\begin{proof}
	
	As in the proof of Theorem \ref{thm6}, the bilinear form $a$ is coercive on the set 
	\begin{equation*}
	\eta=\{(u,\vec{v})\in L^2(\Omega)\times H_0^{\Gamma_1\cup\Gamma_3}(\text{div};\Omega)\ | \  b\big( (u,\vec{v}),\vec{\alpha}\big)=0, \ \ \forall \vec{\alpha}\in H_0^{\Gamma_3}(\text{div};\Omega)\}.
	\end{equation*}  
        Therefore, the pair $(u,\vec{v})$ is uniquely determined~\cite[Remark 1.1]{MR1115205}. \revise{As $(\bar{u},\nabla u)$ solves \eqref{444} for every $\vec{\beta}$, uniqueness of $(u,\vec{v})$ implies that $(u, \vec{v})= (\bar{u}, \nabla\bar{u})$.}
        If, additionally, $u=\bar{u}\in H^{p}(\Omega)$, ${p}\geq 4$, then we choose $(\phi,\vec{\psi})=\left(0,Q\left(\vec{\alpha}-\nabla\Delta u+c_0\nabla u\right)\right)$ in \eqref{454}, for any \revise{$Q\in C^{{p}-3}(\Omega)\cap H^1_0(\Omega)$} that is positive in $\Omega$. Note that $\vec{\psi} \in H_0^{\Gamma_1\cup\Gamma_3}(\text{div};\Omega)$ since $u\in H^p(\Omega)$ for $p\geq 4$. Integration by parts on \eqref{454} then yields
	\begin{eqnarray*}
		\int_{\Omega}Q\left(\vec{\alpha}-\nabla\Delta u+c_0\nabla u\right)\cdot \left(\vec{\alpha}-\nabla\Delta u+c_0\nabla u\right)=0.
	\end{eqnarray*}   
	As $Q\left(\vec{\alpha}-\nabla\Delta u+c_0\nabla u\right)\cdot \left(\vec{\alpha}-\nabla\Delta u+c_0\nabla u\right)$ is non-negative in $\Omega$, this implies that $\vec{\alpha}=\nabla\Delta u-c_0\nabla u$.  
\end{proof}
\begin{rmk}
  \revise{In the case $d=2$, we can write $\partial\Omega=\displaystyle\left\{\cup_{i=1}^{M_1}\Gamma^i\right\} \cup \left\{\cup_{i=1}^{M_2} \tilde{\Gamma}^i\right\}$, where $\Gamma^{i}=(x,a_ix+b_i)$ for $x_i^0 < x < x_i^1$, and $\tilde{\Gamma}^{i}=(c_iy+d_i,y)$  for $y_i^0 < y < y_i^1$, with $M_1$ and $M_2$ positive integers. The function $Q$ in Corollary \eqref{co} can be chosen as
    \[
    Q= \Pi_{i=1}^{M_1}\left(y-a_ix-b_i\right)^2\Pi_{i=1}^{M_2}\left(x-c_iy-d_i\right)^2,
    \]
    with $Q\in C^{{p}-3}(\Omega)\cap H^1_{0}(\Omega)$ and $Q$ positive in the interior of $\Omega$. Similarly, $Q$ can be constructed when $d=3$ by writing the boundary faces of $\Omega$ in sets that can be parametrized as planes in each pair of two Cartesian coordinates.  
}\end{rmk}
\section{Discrete Analysis}
\label{sec:discrete}

For what follows, we consider a conforming discretization of the mixed form, with
\[
(u_h,\vec{v}_h,\vec{\alpha}_h) \in DG_k(\Omega,\tau_h)\times RT^{\Gamma_3}_{k+1}(\Omega,\tau_h)\times RT^{\Gamma_2\cup\Gamma_3}_{k+1}(\Omega,\tau_h),
\]
for $k\geq 0$, where $RT^{\Gamma}_{k+1}(\Omega,\tau_h)=\left\{\vec{v}_h\in RT_{k+1}(\Omega,\tau_h)\ | \ \vec{v}_h\cdot\vec{n}=0\ \text{on} \ \Gamma\right\}$, noting that $DG_k(\Omega,\tau_h) \subset L^2(\Omega)$ and $RT^{\Gamma}_{k+1}(\Omega,\tau_h) \subset H_0^{\Gamma}(\text{div};\Omega)$. We examine the problem of finding $(u,\vec{v}, \vec{\alpha})\in DG_{k}(\Omega,\tau_h)\times RT_{k+1}^{\Gamma_3}(\Omega,\tau_h)\times RT^{\Gamma_2\cup\Gamma_3}_{k+1}(\Omega,\tau_h)$ such that
\begin{eqnarray}
{a}\big((u_h,\vec{v}_h ), (\phi_h, \vec{\psi}_h)\big)+ {b}\big((\phi_h, \vec{\psi}_h),\vec{\alpha}_h \big)&=&{F}\big(\phi_h),\label{eq:conf-disc-aa}\\
{b}\big((u_h,\vec{v}_h), \vec{\beta}_h\big)&=&0,\label{eq:conf-disc-ba}
\end{eqnarray}
$\forall (\phi_h,\vec{\psi}_h,\vec{\beta}_h)\in DG_{k}(\Omega,\tau_h)\times RT^{\Gamma_3}_{k+1}(\Omega,\tau_h)\times RT^{\Gamma_2\cup\Gamma_3}_{k+1}(\Omega,\tau_h)$, where $a$, $b$ and $F$ are defined in \eqref{28}-\eqref{30}. Boundary conditions on $\Gamma_1$ will be enforced with Nitsche's method. As in the continuum case, taking $\partial\Omega=\Gamma_0\cup\Gamma_3$ is the easiest case to consider.
\begin{corollary}\label{cor:discrete_gamma0}
	Let $\partial\Omega = \Gamma_0\cup \Gamma_3$, $c_0\geq0, c_1>0$,  and $c_1\geq0$ if $\Gamma_0$ is nonempty. Let $\{\tau_h\}$, $0 < h \leq 1$ be a quasiuniform family of triangular meshes of $\Omega$.  Then problem \eqref{eq:conf-disc-aa}-\eqref{eq:conf-disc-ba} has a unique solution for any $\tau_h$ in the family.
\end{corollary}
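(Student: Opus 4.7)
The plan is to verify the discrete Brezzi conditions by mirroring the proof of Theorem~\ref{thm6} almost verbatim, exploiting the fact that the chosen discrete spaces inherit the relevant structural properties from their continuous counterparts. Continuity of $a$, $b$, and $F$ on the discrete spaces is inherited from the continuous level since $DG_k(\Omega,\tau_h)\times RT^{\Gamma_3}_{k+1}(\Omega,\tau_h)\times RT^{\Gamma_3}_{k+1}(\Omega,\tau_h)$ (noting $\Gamma_2=\emptyset$ here) is a conforming subspace of $L^2(\Omega)\times H^{\Gamma_3}_0(\text{div};\Omega)\times H^{\Gamma_3}_0(\text{div};\Omega)$ equipped with the product norm.

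For coercivity of $a$ on the discrete kernel
\[
\eta_h=\{(u_h,\vec{v}_h)\in DG_k(\Omega,\tau_h)\times RT^{\Gamma_3}_{k+1}(\Omega,\tau_h)\ |\ b((u_h,\vec{v}_h),\vec{\beta}_h)=0,\ \forall\vec{\beta}_h\in RT^{\Gamma_3}_{k+1}(\Omega,\tau_h)\},
\]
the key observation is that Remark~\ref{1234} gives $\nabla\cdot\vec{v}_h\in DG_k(\Omega,\tau_h)$, so the test function $\vec{\beta}_h=\vec{v}_h$ is admissible in the kernel condition. This yields $\|\vec{v}_h\|_0^2=-\int_\Omega u_h\nabla\cdot\vec{v}_h$, after which the algebraic manipulation in Theorem~\ref{thm6} for the case $c_1>0$ transfers directly. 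For the case $c_1=0$ with $\Gamma_0$ nonempty, I would introduce the discrete mixed Poisson auxiliary problem: find $(\phi_h,\vec{\alpha}_{m,h})\in DG_k(\Omega,\tau_h)\times RT^{\Gamma_3}_{k+1}(\Omega,\tau_h)$ such that $\int_\Omega \delta_h\nabla\cdot\vec{\alpha}_{m,h}=\int_\Omega u_h\delta_h$ for all $\delta_h\in DG_k$, and $\int_\Omega\vec{\alpha}_{m,h}\cdot\vec{\beta}_h+\phi_h\nabla\cdot\vec{\beta}_h=0$ for all $\vec{\beta}_h\in RT^{\Gamma_3}_{k+1}$. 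The pair $(DG_k,RT^{\Gamma_3}_{k+1})$ is a classical stable pair for mixed Poisson (Lemma~\ref{lemma,helm} provides the Helmholtz decomposition underlying this), and well-posedness holds with a stability constant $\Lambda$ independent of $h$ across the quasiuniform family. Then $\vec{\beta}_h=\mu\vec{v}_h+\vec{\alpha}_{m,h}$ is admissible in the kernel condition, and the Cauchy--Schwarz/Young argument of Theorem~\ref{thm6} proceeds unchanged.

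For the discrete inf-sup condition on $\vec{\alpha}_h\in RT^{\Gamma_3}_{k+1}(\Omega,\tau_h)$, I would make the same concrete choice as in the continuum case: take $u_h=\nabla\cdot\vec{\alpha}_h$ and $\vec{v}_h=\vec{\alpha}_h$. This is admissible precisely because Remark~\ref{1234} guarantees $\nabla\cdot\vec{\alpha}_h\in DG_k(\Omega,\tau_h)$, and $\vec{\alpha}_h$ itself lies in $RT^{\Gamma_3}_{k+1}(\Omega,\tau_h)$. A direct computation then gives $b((u_h,\vec{v}_h),\vec{\alpha}_h)=\|\vec{\alpha}_h\|^2_{\text{div}}$ and $\sqrt{\|u_h\|_0^2+\|\vec{v}_h\|^2_{\text{div}}}\leq\sqrt{2}\|\vec{\alpha}_h\|_{\text{div}}$, producing the inf-sup bound with constant $1/\sqrt{2}$.

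I do not expect a genuine obstacle here: the proof is essentially a transcription of Theorem~\ref{thm6} to the discrete level, enabled entirely by the compatibility $\nabla\cdot RT_{k+1}(\Omega,\tau_h)\subseteq DG_k(\Omega,\tau_h)$ from Remark~\ref{1234}. The only mildly delicate step is quoting uniform (in $h$) well-posedness of the auxiliary discrete mixed Poisson problem in the $c_1=0$ case, which is standard for this stable pair on quasiuniform families.
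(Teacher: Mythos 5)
Your proposal is correct and follows essentially the same route as the paper: verify the Brezzi conditions by transcribing the continuum argument of Theorem~\ref{thm6}, with conformity giving continuity, the choice $\vec{\beta}_h=\vec{v}_h$ (and a discrete mixed Poisson auxiliary problem when $c_1=0$) giving kernel coercivity, and $u_h=\nabla\cdot\vec{\alpha}_h$, $\vec{v}_h=\vec{\alpha}_h$ giving the inf-sup condition, admissible by Remark~\ref{1234}. The only nitpick is that admissibility of $\vec{\beta}_h=\vec{v}_h$ in the kernel condition is immediate from $\vec{v}_h\in RT^{\Gamma_3}_{k+1}(\Omega,\tau_h)$ and does not need Remark~\ref{1234}, which is only required for the inf-sup choice $u_h=\nabla\cdot\vec{\alpha}_h$.
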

\begin{proof}
	Coercivity of the bilinear form $a\big((u_h,\vec{v}_h), (\phi, \vec{\psi}_h)\big)$ on the set 
	\begin{equation*}
	\{(u_h,\vec{v}_h)\in DG_{k}(\Omega,\tau_h)\times RT^{\Gamma_3}_{k+1}(\Omega,\tau_h)\ | \  b\big( (u_h,\vec{v}_h),\vec{\alpha}_h\big)=0, \ \ \forall \vec{\alpha}\in RT^{\Gamma_3}_{k+1}(\Omega,\tau_h)\}, 
	\end{equation*}  
	and the inf-sup condition of the form
	\begin{equation*}
	\sup_{\substack{(u_h,\vec{v}_h)\in DG_{k}(\Omega,\tau_h)\times  RT^{\Gamma_3}_{k+1}(\Omega,\tau_h)\\(u_h,\vec{v}_h)\neq (0,\vec{0})}} \frac{b\big((u_h,\vec{v}_h),\vec{\alpha}_h\big)}{\sqrt{\|u_h\|_0^2+\|\vec{v}_h\|^2_{{\text{div}}}}}\geq \frac{1}{\sqrt{2}} \|\vec{\alpha}_h\|_{{\text{div}}}, \ \forall \vec{\alpha }_h\in  RT^{\Gamma_3}_{k+1}(\Omega,\tau_h),
	\end{equation*}
	can be proven exactly as in the continuum. Note that this is compatible with the finite-element spaces. For example, when $c_1>0$, we can choose $\vec{\alpha}_h=\vec{v}_h\in RT^{\Gamma_3}_{k+1}(\tau_h)$ in the kernel condition within the coercivity proof, and $\vec{v}_h=\vec{\alpha}_{h}\in RT^{\Gamma_3}_{k+1}(\tau_h)$ and $u_h=\nabla\cdot\vec{\alpha}_h$ in the proof of the inf-sup condition. Such a $u_h$ is in $DG_k(\Omega,\tau_h)$ by Remark \ref{1234}.
\end{proof}

\begin{corollary}\label{cor4}
	Let the assumptions of Corollary \ref{cor:discrete_gamma0} hold, and let $$(u,\vec{v},\vec{\alpha}) \in  H^{k+1}(\Omega)\times [H^{k+2}(\Omega)]^d\times [H^{k+2}(\Omega)]^d$$ be the solution of  \eqref{454}-\eqref{444}. Let $$(u_h,\vec{v}_h, \vec{\alpha}_h)\in DG_{k}(\Omega,\tau_h)\times RT^{\Gamma_3}_{k+1}(\Omega,\tau_h)\times RT^{\Gamma_3}_{k+1}(\Omega,\tau_h)$$ be the solution of \eqref{eq:conf-disc-aa}-\eqref{eq:conf-disc-ba}.  Then, there exist constants $m_1$ and $m_2$ such that
	\begin{align}
		\|(u,\vec{v}) - (u_h,\vec{v}_h)\|_{0,\text{\normalfont div}} & \leq m_1 h^{k+1}\bigg(|u|_{k+1}^2 + |\vec{v} |_{k+1}^2+|\vec{v}|_{k+2}^2 + |\vec{\alpha} |_{k+1}^2+|\vec{\alpha}|_{k+2}^2\bigg)^{1/2}, \label{eq:conforming_uv}\\
		\| \vec{\alpha} - \vec{\alpha}_h  \|_{\textrm{\normalfont div}} & \leq m_2 h^{k+1}\bigg(|u|_{k+1}^2 + |\vec{v} |_{k+1}^2+| \vec{v}|_{k+2}^2 + |\vec{\alpha} |_{k+1}^2+| \vec{\alpha} |_{k+2}^2\bigg)^{1/2}.\label{eq:conforming_alpha}
	\end{align}
\end{corollary}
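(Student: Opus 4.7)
The plan is to deduce both estimates from the standard Brezzi abstract framework for mixed finite-element methods. Corollary~\ref{cor:discrete_gamma0} already establishes, with constants independent of $h$, that the bilinear form $a$ is coercive on the discrete kernel
\begin{equation*}
Z_h = \{(u_h,\vec{v}_h) \in DG_k(\Omega,\tau_h) \times RT^{\Gamma_3}_{k+1}(\Omega,\tau_h)\ |\ b((u_h,\vec{v}_h),\vec{\beta}_h) = 0,\ \forall \vec{\beta}_h \in RT^{\Gamma_3}_{k+1}(\Omega,\tau_h)\},
\end{equation*}
and that $b$ satisfies a discrete inf-sup condition uniformly over the quasiuniform family. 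Together with continuity of $a$ and $b$ in the product $L^2 \times H(\text{div}) \times H(\text{div})$ norm, classical Brezzi theory then yields the quasi-optimal bound
\begin{equation*}
\|(u,\vec{v}) - (u_h,\vec{v}_h)\|_{0,\text{div}} + \|\vec{\alpha} - \vec{\alpha}_h\|_{\text{div}} \leq C \!\left( \inf_{(\phi_h,\vec{\psi}_h) \in X_h} \|(u,\vec{v}) - (\phi_h,\vec{\psi}_h)\|_{0,\text{div}} + \inf_{\vec{\beta}_h \in M_h} \|\vec{\alpha} - \vec{\beta}_h\|_{\text{div}} \right),
\end{equation*}
with $X_h = DG_k(\Omega,\tau_h) \times RT^{\Gamma_3}_{k+1}(\Omega,\tau_h)$, $M_h = RT^{\Gamma_3}_{k+1}(\Omega,\tau_h)$, and $C$ depending only on the coercivity, inf-sup, and continuity constants. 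The Galerkin orthogonality required to invoke this estimate follows immediately from the fact that the discrete spaces are subspaces of the continuum spaces and that $a$, $b$, $F$ are unchanged between the two formulations.

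Next, I would bound the two infima by concrete interpolants. The natural choice is $\phi_h = I_h^k u$, $\vec{\psi}_h = \Pi_h^{k+1}\vec{v}$, and $\vec{\beta}_h = \Pi_h^{k+1}\vec{\alpha}$. These are admissible elements of $X_h$ and $M_h$ because the Raviart--Thomas interpolant preserves the normal trace on mesh faces, and $\vec{v},\vec{\alpha} \in H^{\Gamma_3}_0(\text{div};\Omega)$ by Theorem~\ref{thm6} (noting $\Gamma_1 = \Gamma_2 = \emptyset$ when $\partial\Omega = \Gamma_0 \cup \Gamma_3$). Applying the approximation estimates from Theorem~\ref{pro},
\begin{equation*}
\|u - I_h^k u\|_0 \leq \tilde{c}\, h^{k+1}|u|_{k+1}, \qquad \|\vec{w} - \Pi_h^{k+1}\vec{w}\|_{\text{div}} \leq c\, h^{k+1}\bigl(|\vec{w}|_{k+1} + |\vec{w}|_{k+2}\bigr)
\end{equation*}
for $\vec{w} \in \{\vec{v},\vec{\alpha}\}$, squaring, summing, and absorbing all constants into single constants $m_1$ and $m_2$, produces the stated $h^{k+1}$ rate against the sum of squared seminorms on the right-hand side of \eqref{eq:conforming_uv}--\eqref{eq:conforming_alpha}.

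The argument is essentially bookkeeping, and no genuine obstacle is anticipated. The two points that warrant care are (i) confirming that the Brezzi constants from Corollary~\ref{cor:discrete_gamma0} are mesh-independent, which is transparent from the proof there, since the coercivity argument is purely algebraic and the inf-sup choice $u_h = \nabla\cdot\vec{\alpha}_h,\ \vec{v}_h = \vec{\alpha}_h$ is admissible by Remark~\ref{1234}; and (ii) confirming admissibility of the Raviart--Thomas interpolants as elements of $RT^{\Gamma_3}_{k+1}(\Omega,\tau_h)$, which follows from preservation of normal moments. Unlike the mixed-Nitsche case treated later, here no boundary penalty terms appear, so no inverse-trace estimates or use of Lemma~\ref{cor:inverse_trace_quasi_approx}/Lemma~\ref{Cor4} are required in this corollary.
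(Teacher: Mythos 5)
Your proposal is correct and follows essentially the same route as the paper: the paper's proof is a one-line appeal to standard approximation theory for conforming mixed finite elements (Brezzi theory), which is precisely the quasi-optimality-plus-interpolation argument you spell out, using the interpolation estimates of Theorem~\ref{pro} and the mesh-independent coercivity and inf-sup constants from Corollary~\ref{cor:discrete_gamma0}. Your additional care about admissibility of the Raviart--Thomas interpolant in $RT^{\Gamma_3}_{k+1}(\Omega,\tau_h)$ is a detail the paper leaves implicit but is handled correctly.
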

\begin{proof}
	Because this is a conforming discretization, standard approximation theory for mixed finite elements (e.g.~\cite{MR3097958}) yields optimal approximation results in the product norm,
	$\|\left(u_h,\vec{v}_h\right)\|_{0,\text{div}}^2 =
	\|u_h\|_0^2 + \|\vec{v}_h\|_{\text{div}}^2$.
\end{proof}
\begin{corollary}\label{thm4}
	Problem \eqref{eq:conf-disc-aa}-\eqref{eq:conf-disc-ba}, with $\partial \Omega=\Gamma_0\cup\Gamma_2\cup\Gamma_3$, has a unique solution for $c_0,c_1>0$.  If, further,  $$(u,\vec{v},\vec{\alpha}) \in  H^{k+1}(\Omega)\times [H^{k+2}(\Omega)]^d\times [H^{k+2}(\Omega)]^d$$ is the solution of  \eqref{454}-\eqref{444} and  $$(u_h,\vec{v}_h, \vec{\alpha}_h)\in DG_{k}(\Omega,\tau_h)\times RT^{\Gamma_3}_{k+1}(\Omega,\tau_h)\times RT^{\Gamma_3}_{k+1}(\Omega,\tau_h)$$ is the solution of \eqref{eq:conf-disc-aa}-\eqref{eq:conf-disc-ba}, then the error bounds in \eqref{eq:conforming_uv}--\eqref{eq:conforming_alpha} also hold for this case.
\end{corollary}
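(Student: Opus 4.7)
The plan is to mirror the structure of the proof of Corollary~\ref{corr} in the continuum setting, verifying the discrete Brezzi conditions on the spaces $DG_k(\Omega,\tau_h)\times RT^{\Gamma_3}_{k+1}(\Omega,\tau_h)\times RT^{\Gamma_2\cup\Gamma_3}_{k+1}(\Omega,\tau_h)$, and then invoking standard mixed finite-element approximation theory to recover the same error bounds stated in Corollary~\ref{cor4}. Continuity of $a$, $b$, and $F$ in the product norm is inherited immediately from the continuum-level continuity established in the proof of Theorem~\ref{thm6}, since we use a conforming discretization and the same bilinear and linear forms.

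For coercivity, the assumption $c_0,c_1>0$ makes the argument direct: for any $(u_h,\vec{v}_h)\in DG_k(\Omega,\tau_h)\times RT^{\Gamma_3}_{k+1}(\Omega,\tau_h)$,
\[
a\big((u_h,\vec{v}_h),(u_h,\vec{v}_h)\big)=c_0\|\vec{v}_h\|_0^2+\|\nabla\cdot\vec{v}_h\|_0^2+c_1\|u_h\|_0^2\geq \min\{1,c_0,c_1\}\big(\|\vec{v}_h\|_{\text{div}}^2+\|u_h\|_0^2\big),
\]
so coercivity holds on the full space (and in particular on the discrete kernel), with an $h$-independent constant, just as in Corollary~\ref{corr}.

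For the inf-sup condition, I would mimic the continuum choice: given $\vec{\alpha}_h\in RT^{\Gamma_2\cup\Gamma_3}_{k+1}(\Omega,\tau_h)$, set $\vec{v}_h=\vec{\alpha}_h$ and $u_h=\nabla\cdot\vec{\alpha}_h$. The subtle but crucial compatibility check, which I expect to be the main obstacle, is verifying that these are admissible test functions in the discrete spaces. First, $\vec{\alpha}_h\in RT^{\Gamma_2\cup\Gamma_3}_{k+1}(\Omega,\tau_h)\subset RT^{\Gamma_3}_{k+1}(\Omega,\tau_h)$, so the choice $\vec{v}_h=\vec{\alpha}_h$ is legitimate (this is the step where the nesting of essential boundary conditions becomes essential). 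Second, by Remark~\ref{1234}, $\nabla\cdot\vec{\alpha}_h\in DG_k(\Omega,\tau_h)$, so $u_h$ lies in the discrete scalar space. Substituting into $b$ yields $b((u_h,\vec{v}_h),\vec{\alpha}_h)=\|\vec{\alpha}_h\|_0^2+\|\nabla\cdot\vec{\alpha}_h\|_0^2=\|\vec{\alpha}_h\|_{\text{div}}^2$, while $\|u_h\|_0^2+\|\vec{v}_h\|_{\text{div}}^2\leq 2\|\vec{\alpha}_h\|_{\text{div}}^2$, giving the required bound with constant $1/\sqrt{2}$, independent of $h$.

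With the discrete Brezzi conditions in hand, existence and uniqueness of $(u_h,\vec{v}_h,\vec{\alpha}_h)$ follow. The error bounds \eqref{eq:conforming_uv}--\eqref{eq:conforming_alpha} then follow exactly as in Corollary~\ref{cor4}: since the discretization is conforming and the stability constants are $h$-independent, standard mixed finite-element quasi-optimality (e.g., \cite{MR3097958}) reduces the error to best-approximation errors, which are controlled componentwise by the interpolation estimates of Theorem~\ref{pro} applied to $I^k_h u$ and $\Pi^{k+1}_h\vec{v}$, $\Pi^{k+1}_h\vec{\alpha}$, yielding the $\mathcal{O}(h^{k+1})$ rates stated.
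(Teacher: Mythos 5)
Your proposal is correct and follows essentially the same route as the paper's proof: coercivity of $a$ on the full discrete space from $c_0,c_1>0$, the inf-sup condition via the choice $\vec{v}_h=\vec{\alpha}_h$, $u_h=\nabla\cdot\vec{\alpha}_h$ (admissible by the nesting of the boundary conditions and Remark~\ref{1234}), and the error bounds from conformity and standard mixed finite-element theory as in Corollary~\ref{cor4}. You have simply spelled out the details that the paper compresses into a reference to Corollaries~\ref{corr} and~\ref{cor4}.
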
       
\begin{proof}
	The proof follows exactly as those of Corollaries \ref{corr} and~\ref{cor4}. The bilinear form $a$ is coercive for every pair $(u_h,\vec{v}_h)\in DG_k(\Omega,\tau_h)\times RT^{\Gamma_3}_{k+1}(\Omega,\tau_h)$. The finite-element approximation spaces allow the choice $u_h=\nabla\cdot\vec{\alpha}_h$ and $\vec{v}_h=\vec{\alpha}_{h}$ for the inf-sup condition.  As this is a conforming discretization, standard theory yields optimal approximation results. 
\end{proof}
As in the continuum case, solving \eqref{eq:conf-disc-aa}--\eqref{eq:conf-disc-ba} when essential boundary conditions on $\vec{v}$ are strongly imposed while $\vec{\alpha}$ is free on the boundary leads to difficulties in proving the inf-sup condition. When $\partial \Omega=\Gamma_1$, we cannot follow the proof technique used in Theorem \ref{thm6} and Corollary \ref{corr}, since $\vec{v}$ must satisfy the prescribed BC while $\vec{\alpha}$ is free on the boundary. In this case, the inf-sup condition has the form of finding $\tilde{c}>0$ such that
\begin{align*}
I=\sup_{\substack{(u_h,\vec{v}_h)\neq\in DG_k(\Omega,\tau_h)\times RT^{\Gamma_1}_{k+1}(\Omega,\tau_h)\\(u_h,\vec{v}_h)\neq (0,\vec{0})}} &\frac{\int_{\Omega}\vec{\alpha}_h\cdot\vec{v}_h+\int_{\Omega}u_h\nabla\cdot\vec{\alpha}_h}{\sqrt{\|u_h\|_0^2+\|\vec{v}_h\|^2_{{\text{div}}}}}\\ &>\tilde{c}\|\vec{\alpha}_h\|_{\text{div}}, \ \forall \vec{\alpha}_h\in RT_{k+1}(\Omega,\tau_h). 
\end{align*}

To understand the challenge, we consider a simply-connected domain $\Omega$. Then the two-dimensional discrete Helmholtz decomposition from Lemma \ref{lemma,helm},
\begin{equation*}
RT_{k+1}(\Omega,\tau_h)=\bigg(\nabla\times CG_{k+1}(\Omega,\tau_h)\bigg)\oplus \bigg(\grad_h DG_{k}(\Omega,\tau_h)\bigg).
\end{equation*}
For any $\vec{\alpha}_h =\grad_h z$ where $z\in DG_{k}(\Omega,\tau_h)$, then the choice $\vec{v}_h=0$ and $u_h=\nabla \cdot \vec{\alpha}_h-z$ satisfies the inf-sup condition, as
\begin{align*}
I&\geq\sup_{\substack{u_h\in DG_{k}(\Omega,\tau_h)\\ u_h\neq 0}} \frac{\int_{\Omega}u_h\nabla\cdot \vec{\alpha}_h}{\|u_h\|_0}\geq\frac{\|\nabla\cdot\vec{\alpha}_h\|_0^2-\int_{\Omega}z \nabla\cdot (\grad_h z)}{\|\nabla\cdot\vec{\alpha}_h\|_0+\|z\|_0} \\ &\geq \frac{\|\nabla\cdot\vec{\alpha}_h\|^2_0+\|\grad_h z\|_0^2}{\|\nabla\cdot\vec{\alpha}_h\|_0+c \|\grad_h z\|_0}
\geq \tilde{c}\left(\|\nabla\cdot \vec{\alpha}_h\|_0+ \|\vec{\alpha}_h\|_0\right),
\end{align*}
where the discrete Poincar\'e inequality \cite{MR2974162}, $\|z\|_0\leq c\|\grad_h z\|_0$, is used here. In contrast, for $\vec{\alpha}_h\in \nabla\times CG_{k+1}(\Omega,\tau_h)$, we cannot establish a uniform inf-sup condition. As a simple example, take $\Omega=(0,1)^2$, with $\partial\Omega= \Gamma_1$, and consider $k=0$, so $\vec{\alpha}_{h}\in RT_1(\Omega,\tau_h)$. Consider a mesh such that one triangle has vertices $(0,0), \ (h,0)$, and $(0,h)$. Take $\vec{\alpha}_h$ to be nonzero only in this triangle, with value
\begin{equation}\label{inf0}
\vec{\alpha}_h=\nabla \times \left(1-\frac{x+y}{h}\right)=\frac{1}{h}\begin{bmatrix}
-1\\
1\\
\end{bmatrix}.
\end{equation} 
Clearly, $\nabla \cdot\vec{\alpha}_h=0$. Moreover, for any choice of $\vec{v}_h\in RT_{1}(\Omega,\tau_h)$ with $\vec{v}_h\cdot\vec{n}=0$ on $\partial\Omega$ and $u_h\in DG_{0}(\Omega,\tau_h)$, we have  $\int_{\Omega}\vec{\alpha}_h\cdot\vec{v}_h+\int_{\Omega}u\nabla\cdot\vec{\alpha}_h=0$, which results in a zero inf-sup constant.
Numerical experiments (not reported here) suggest that restricting the mesh so that no element has 3 vertices on the boundary yields an $\mathcal{O}(h)$ inf-sup constant. 

We next show that weakly implementing the essential boundary condition on $\vec{v}$ yields a discretization with $\mathcal{O}(1)$ continuity and coercivity constants and $\mathcal{O}(h)$ inf-sup constant, without any mesh restrictions but in a modified norm. This is slightly disadvantageous, because the error estimate loses some convergence due to both the sub-optimal inf-sup constant and the error analysis in the modified norm; however, we view the lack of restrictions on the mesh construction to be preferable to possible further results in the direction considered above.
To weakly impose the boundary condition, we will make use of a Nitsche-type penalty method. These approaches are based on adding three terms to the weak form, which are commonly denoted as the consistency, stability, and symmetry terms~\cite{Nitsche_1971,aziz}. 
Consider the case where $\partial\Omega = \Gamma_0 \cup \Gamma_1\cup \Gamma_3$, and modify the bilinear form ${a}\big((u,\vec{v}), (\phi, \vec{\psi})\big)$ from \eqref{28}, to be
\begin{alignat}{3}
\hat{a}\big((u_h,\vec{v}_h ), (\phi_h, \vec{\psi}_h)\big)+ \, &{b}\big((\phi_h, \vec{\psi}_h),\vec{\alpha}_h \big)&&={F}(\phi_h), \label{854}\\
&{b}\big((u_h,\vec{v}_h), \vec{\beta}_h\big)&&=0,\label{844}
\end{alignat}
$\forall (\phi_h,\vec{\psi}_h,\vec{\beta}_h)\in DG_{k}(\Omega,\tau_h)\times RT^{\Gamma_3}_{k+1}(\Omega,\tau_h)\times RT^{\Gamma_3}_{k+1}(\Omega,\tau_h) $,  where,
\begin{align}
\hat{a}\big((u_h,\vec{v}_h), (\phi_h, \vec{\psi}_h)\big) =& a\big((u_h,\vec{v}_h), (\phi_h, \vec{\psi}_h)\big)-\int_{\Gamma_1}\nabla\cdot\vec{v}_h\,\vec{\psi}_h\cdot\vec{n} \notag\\
&-\int_{\Gamma_1}\nabla\cdot\vec{\psi}_h\,\vec{v}_h\cdot\vec{n}+\frac{\lambda}{h}\int_{\Gamma_1}\vec{v}_h\cdot\vec{n}\,\vec{\psi}_h\cdot\vec{n} \label{eq:Nitsche_a_gamma1}.
\end{align}
Here, we impose the condition that $\vec{v}_h\cdot\vec{n}=0$ on $\Gamma_1$ directly by adding $\frac{\lambda}{h}\int_{\Gamma_1}\vec{v}_h\cdot\vec{n}\, \vec{\psi}_h\cdot\vec{n}$ to $a$ as defined in \eqref{28} for penalty parameter $\lambda>0$.
Consistent with the boundary condition, we add $-\int_{\Gamma_1}\nabla\cdot\vec{\psi}_h\,\vec{v}_h\cdot\vec{n}$ to the bilinear form.  For symmetry, we add the term $-\int_{\Gamma_1}\nabla\cdot\vec{v}_h\,\vec{\psi}_h\cdot\vec{n}$ to the bilinear form. With these Nitsche terms, we now prove well-posedness of the weak form, but using a modified norm for $\vec{v}_h$, defined in \eqref{eq:strengthened_norm}, which we recall here:
\begin{equation*}
\|\vec{v}_h\|^2_{\text{div}, \Gamma_1}=\|\vec{v}_h\|^2_{\text{div}}+h\|\nabla\cdot\vec{v}_h\|_{0,\Gamma_1}^2+\frac{1}{h}\|\vec{v}_h\cdot\vec{n}\|_{0,\Gamma_1}^2.
\end{equation*}

\begin{thm}\label{thm5}
	Let $\Omega \subset \mathbb{R}^d, \ d\in \{2,3\}$, with $\partial\Omega = \Gamma_0 \cup \Gamma_1\cup \Gamma_3$. Let $\{\tau_h\}$, $0<h\leq 1$, be a quasiuniform family of meshes of $\Omega$,
	and let $\lambda> 0$ be given.
	For sufficiently large $\lambda$, the weak form in \eqref{854}--\eqref{844}
	has a unique solution for $c_0\geq0, \ \ c_1>0$, and for $c_1=0$ if $\Gamma_0\cup \Gamma_1$ is nonempty.
\end{thm}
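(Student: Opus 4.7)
The plan is to verify the three Brezzi conditions---continuity, coercivity on the kernel of $b$, and the inf-sup condition---for the modified saddle-point problem \eqref{854}--\eqref{844}, using the product norm in which $\vec{v}_h$ is measured in $\|\cdot\|_{\text{div},\Gamma_1}$ as defined in \eqref{eq:strengthened_norm}. The key new ingredient relative to Corollary~\ref{cor:discrete_gamma0} is the treatment of the Nitsche boundary integrals in $\hat{a}$, for which the inverse trace estimate of Corollary~\ref{cor:inverse_trace_quasi} (uniform in $h$ by quasi-uniformity) is the essential tool. Continuity of $\hat{a}$ in the strengthened norm follows routinely from Cauchy--Schwarz on the volume terms, while each consistency/symmetry boundary term factors cleanly, for example as
\[
\left|\int_{\Gamma_1}\nabla\cdot\vec{v}_h\,\vec{\psi}_h\cdot\vec{n}\right|\leq\bigl(h^{1/2}\|\nabla\cdot\vec{v}_h\|_{0,\Gamma_1}\bigr)\bigl(h^{-1/2}\|\vec{\psi}_h\cdot\vec{n}\|_{0,\Gamma_1}\bigr),
\]
both factors of which are controlled directly by the strengthened norms; the penalty term is handled similarly, and continuity of $b$ and $F$ is unchanged.

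For coercivity, I would expand $\hat{a}((u_h,\vec{v}_h),(u_h,\vec{v}_h))$ and bound the cross term $-2\int_{\Gamma_1}\nabla\cdot\vec{v}_h\,\vec{v}_h\cdot\vec{n}$ by $\epsilon\gamma_1(k+1)\|\nabla\cdot\vec{v}_h\|_0^2+\tfrac{1}{\epsilon h}\|\vec{v}_h\cdot\vec{n}\|_{0,\Gamma_1}^2$ via Young's inequality composed with Corollary~\ref{cor:inverse_trace_quasi}. Choosing $\epsilon=1/(2\gamma_1(k+1))$ and $\lambda$ exceeding $2\gamma_1(k+1)+\tfrac{1}{2}$ then yields
\[
\hat{a}((u_h,\vec{v}_h),(u_h,\vec{v}_h))\geq c_0\|\vec{v}_h\|_0^2+\tfrac{1}{2}\|\nabla\cdot\vec{v}_h\|_0^2+c_1\|u_h\|_0^2+\tfrac{1}{2h}\|\vec{v}_h\cdot\vec{n}\|_{0,\Gamma_1}^2,
\]
and a second application of the inverse trace inequality absorbs $h\|\nabla\cdot\vec{v}_h\|_{0,\Gamma_1}^2$ into $\|\nabla\cdot\vec{v}_h\|_0^2$, recovering every piece of the strengthened norm except $\|\vec{v}_h\|_0^2$. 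When $c_1>0$, this missing piece is supplied by the kernel identity obtained from the admissible test $\vec{\alpha}_h=\vec{v}_h\in RT^{\Gamma_3}_{k+1}(\Omega,\tau_h)$, namely $\|\vec{v}_h\|_0^2+\int_\Omega u_h\nabla\cdot\vec{v}_h=0$, exactly as in Theorem~\ref{thm6}. For the $c_1=0$ case with $\Gamma_0\cup\Gamma_1$ nonempty, I would replicate the auxiliary mixed Poisson construction of Theorem~\ref{thm6} at the discrete level, choosing $\vec{\alpha}_m\in RT^{\Gamma_3}_{k+1}$ and $\phi\in DG_k$ from a well-posed discrete Poisson problem---the construction is compatible because the finite-element spaces carry the same essential boundary condition as the continuum spaces used there.

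For the inf-sup condition, I would take $\vec{v}_h=\vec{\alpha}_h$ and $u_h=\nabla\cdot\vec{\alpha}_h\in DG_k(\Omega,\tau_h)$ (Remark~\ref{1234}), which is permissible because $\vec{v}_h$ is strongly constrained only on $\Gamma_3$, matching the constraint on $\vec{\alpha}_h$. This gives $b((u_h,\vec{v}_h),\vec{\alpha}_h)=\|\vec{\alpha}_h\|_{\text{div}}^2$, while Corollary~\ref{cor:inverse_trace_quasi} yields
\[
\|\vec{v}_h\|_{\text{div},\Gamma_1}^2\leq\bigl(1+\gamma_1(k+1)\bigr)\|\nabla\cdot\vec{\alpha}_h\|_0^2+\Bigl(1+\tfrac{\gamma_1(k+1)}{h^2}\Bigr)\|\vec{\alpha}_h\|_0^2,
\]
so $\sqrt{\|u_h\|_0^2+\|\vec{v}_h\|_{\text{div},\Gamma_1}^2}\leq C h^{-1}\|\vec{\alpha}_h\|_{\text{div}}$, producing an $\mathcal{O}(h)$---but strictly positive---inf-sup constant. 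Uniqueness then follows from the standard Brezzi theory.

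The principal obstacle I anticipate is ensuring that the threshold for $\lambda$ can be chosen uniformly across the mesh family; this ultimately rests on the fact that $\gamma_1(k+1)$ in Corollary~\ref{cor:inverse_trace_quasi} depends only on $k$, $d$, and the quasiuniformity parameter, so a single $\lambda$ works throughout. A secondary subtlety is verifying that the discrete auxiliary mixed Poisson problem in the $c_1=0$ case remains well-posed on $DG_k\times RT^{\Gamma_3}_{k+1}$, but this is standard since that pair satisfies the usual mixed Poisson inf-sup condition.
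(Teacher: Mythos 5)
Your proposal is correct and follows essentially the same route as the paper's proof: verification of the Brezzi conditions in the strengthened norm $\|\cdot\|_{0,\text{div},\Gamma_1}$, with the inverse trace inequality of Corollary~\ref{cor:inverse_trace_quasi} absorbing the Nitsche cross terms for $\lambda$ sufficiently large, the kernel identity $\|\vec{v}_h\|_0^2=-\int_\Omega u_h\nabla\cdot\vec{v}_h$ (or the discrete auxiliary mixed Poisson construction when $c_1=0$) supplying coercivity, and the choices $\vec{v}_h=\vec{\alpha}_h$, $u_h=\nabla\cdot\vec{\alpha}_h$ giving the same $\mathcal{O}(h)$ inf-sup constant. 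The only differences are immaterial constants (e.g., you use $\gamma_1(k+1)$ where $\gamma_1(k)$ suffices for $\nabla\cdot\vec{v}_h\in DG_k$, and hence a slightly different threshold on $\lambda$ than the paper's $\lambda>3\gamma_1(k)$), which do not affect validity.
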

\begin{proof}
	As above, the existence and uniqueness of solutions follows from standard theory. We first show that the bilinear form $\hat{a}\big((u_h,\vec{v}_h), (\phi_h, \vec{\psi}_h)\big)$ is coercive for any pair $(u_h, \vec{v}_h)\in \eta_h$, where  
	\begin{align*}
	\eta_h=& \{(u_h,\vec{v}_h)\in DG_{k}(\Omega,\tau_h)\times RT^{\Gamma_3}_{k+1}(\Omega,\tau_h) | \\ & \phantom{xxxxxxxxx}  b\big( (u_h,\vec{v}_h),\vec{\alpha}_h\big)=0, \ \ \forall \vec{\alpha}_h\in RT^{\Gamma_3}_{k+1}(\Omega,\tau_h)\}.
	\end{align*}
	Since the strongly imposed boundary conditions for $\vec{v}$ and $\vec{\alpha}$ are identical, the kernel condition implies that $b\big((u_h,\vec{v}_h), \vec{v}_h\big) = 0$ for any pair $(u_h, \vec{v}_h)\in \eta_h$. Thus, any pair $(u_h, \vec{v}_h)\in \eta_h$ should satisfy $\|\vec{v}_h\|_0^2=-\int_{\Omega}u_h\nabla\cdot\vec{v}_h\leq \frac{1}{2}\left(\|u_h\|_0^2+\|\vec{v}_h\|_0^2\right)$. Employing the Cauchy-Schwarz inequality and letting $\gamma=\gamma_1(k)$ from Corollary~\ref{cor:inverse_trace_quasi}, we then have
	\begin{align}
    \label{eq:hata_coercivity}
	\hat{a}\big((u_h,\vec{v}_h), (u_h, \vec{v}_h)\big) =& c_0\|\vec{v}_h\|_0^2+ \|\nabla\cdot\vec{v}_h\|^2_0+c_1\|u_h\|^2_0-2\int_{\Gamma_1}\nabla\cdot\vec{v}_h\, \vec{v}_h\cdot\vec{n}+\frac{\lambda}{h}\|\vec{v}_h\cdot\vec{n}\|^2_{0,\Gamma_1} \nonumber \\
	\geq&\|\nabla\cdot\vec{v}_h\|_0^2+c_1\|u_h\|_0^2-\frac{h}{3\gamma}\|\nabla\cdot\vec{v}_h\|^2_{0,\Gamma_1}+\frac{\lambda-3\gamma}{h}\|\vec{v}\cdot\vec{n}\|^2_{0,\Gamma_1} \nonumber \\
	\geq&\frac{1}{3}\|\nabla\cdot\vec{v}_h\|_0^2+c_1\|u_h\|_0^2+\frac{h}{3\gamma}\|\nabla\cdot\vec{v}_h\|^2_{0,\Gamma_1}+\frac{\lambda-3\gamma}{h}\|\vec{v}\cdot\vec{n}\|^2_{0,\Gamma_1} \nonumber \\
	\geq & \frac{2\min\{1/3,c_1\}}{3}\|\vec{v}_h\|_0^2   +\frac{2\min\{1/3,c_1\}}{3}\big(\|\nabla\cdot\vec{v}_h\|^2_0+\|u\|^2_0\big) \nonumber \\
	&+\frac{h}{3\gamma}\|\nabla\cdot\vec{v}_h\|_{0,\Gamma_1}^2+\frac{\lambda-3\gamma}{h}\|\vec{v}_h\cdot\vec{n}\|^2_{0, \Gamma_1}.
	\end{align}
	Clearly, any choice of $\lambda>3\gamma$ completes the proof. 
 When $\Gamma_0\cup\Gamma_1$ is nonempty and $c_0=0$, the coercivity proof is similar to the one in Theorem \ref{thm6}. 
	
	Continuity of $\hat{a}$, $b$, and ${F}$, can be established using Cauchy-Schwarz inequality. The resulting inequalities are that
	\begin{align*}
	\hat{a}\left(\left(u_h,\vec{v}_h\right),\left(\phi_h,\vec{\psi}_h\right)\right) &\leq  \|\hat{a}\|\|(u_h,\vec{v}_h)\|_{0,\text{div},\Gamma_1}\|(\phi_h,\vec{\psi}_h)\|_{0,\text{div},\Gamma_1},
	\end{align*}
	and
	\[
	b\left(\left(u_h,\vec{v}_h\right),\vec{\alpha}_h\right) \leq \|\left(u_h,\vec{v}_h\right)\|_{0,\text{div},\Gamma_1}\|\vec{\alpha}_h\|_{\text{div}},
	\]
	where $\|\hat{a}\|=3+c_0+c_1+\lambda, \ \text{and}\ \|\left(u_h,\vec{v}_h\right)\|_{0,\text{div},\Gamma_1}^2 = \|u_h\|_0^2 + \|\vec{v}_h\|_{\text{div},\Gamma_1}^2$. Thus, the continuity constant of the bilinear form $\hat{a}$ is $\mathcal{O}(1)$. 
        
	Finally, we consider the inf-sup condition, that $\exists \theta > 0$ such that
	\begin{align*}
	  I=&\sup_{\substack{(u_h,\vec{v}_h)\neq \in DG_{k}(\Omega,\tau_h)\times RT^{\Gamma_3}_{k+1}(\Omega,\tau_h)\\ (u_h,\vec{v}_h)\neq (0,\vec{0})}} \frac{\int_{\Omega}\vec{\alpha}_h\cdot\vec{v}_h+\int_{\Omega}u_h\nabla\cdot\vec{\alpha}_h}{ \sqrt{\|\vec{v}_h\|^2_{\text{div},\Gamma_1}+\|u_h\|^2_0}} \\
          &\geq \theta\|\vec{\alpha}_{h}\|_{\text{div}}, \ \forall \vec{\alpha}_h\in RT^{\Gamma_3}_{k+1}(\Omega,\tau_h).
	\end{align*}
	The choice $\vec{v}_h=\vec{\alpha}_h$ and $u_h=\nabla\cdot\vec{\alpha}_h$ and the inverse trace inequality from Corollary~\ref{cor:inverse_trace_quasi} implies that
	\begin{align*}
	I&\geq\frac{\|\vec{\alpha}_h\|_{\text{div}}^2}{\sqrt{\|\vec{\alpha}_h\|_{\text{div}}^2+h\|\nabla\cdot\vec{\alpha}_h\|_{0,\Gamma_1}^2+\frac{1}{h}\|\vec{\alpha}_h\cdot\vec{n}\|_{0,\Gamma_1}^2+\|\nabla\cdot\vec{\alpha}_{h}\|_0^2}}\\
	&\geq\frac{\|\vec{\alpha}_h\|_{\text{div}}^2}{\sqrt{\left(1+\frac{\gamma_1(k+1)}{h^2}\right)\|\vec{\alpha}_h\|_{0}^2+\left(2+\gamma_1(k)\right)\|\nabla\cdot\vec{\alpha}_h\|_{0}^2}} \\
	&\geq \frac{1}{\sqrt{ 1+\frac{\gamma_1(k+1)}{h^2} }}\|\vec{\alpha}_h\|_{\text{div}}= \frac{h}{\sqrt{ h^2+\gamma_1(k+1) }}\|\vec{\alpha}_h\|_{\text{div}}\geq \frac{h}{\sqrt{1+\gamma_1(k+1)}} \|\vec{\alpha}_h\|_{\text{div}}.
	\end{align*}
\end{proof}

While the above result establishes the existence and uniqueness of discrete solutions, we note that the inf-sup constant is $\mathcal{O}({h})$, owing to the contribution from the boundary terms in the formulation. \revise{We note that the following theorem relies on sufficient regularity of the continuum solution to establish the resulting error estimates; ensuring this regularity would require substantial assumptions on both the domain, $\Omega$, and the problem data, $f$ (and any non-homogeneous boundary data), that we do not elaborate here.}

\begin{thm}\label{cor1}
	Assume $\Omega\subset \mathbb{R}^d$, $d \in \{2,3\}$ is a bounded polygonal or polyhedral domain with Lipschitz boundary. Let the assumptions of Theorem \ref{thm5} be satisfied, and assume that $(u,\vec{v},\vec{\alpha})\in H^{k+5}(\Omega)\times [H^{k+4}(\Omega)]^d\times[H^{k+2}(\Omega)]^d$ is the solution of \eqref{454}--\eqref{444}.
	Let $(u_h,\vec{v}_h, \vec{\alpha}_h)$ be the unique solution of Problem  \eqref{854}-\eqref{844}.
	Then, there exists positive constants $M_1$ and $M_2$ such that
	\begin{align*}
		\|(u,\vec{v}) - (u_h,\vec{v}_h)\|_{0,\text{\normalfont div},\Gamma_1} & \leq M_1 h^{k}\bigg(|u|_{k+1}^2 + |\vec{v} |_{k+1}^2+|  \vec{v}|_{k+2}^2 + |\vec{\alpha} |_{k+1}^2+| \vec{\alpha}|_{k+2}^2\bigg)^{1/2}, \\
		\| \vec{\alpha} -\vec{\alpha}_h  \|_{\textrm{\normalfont div}} & \leq M_2 h^{k-1}\bigg(|u|_{k+1}^2 + |\vec{v} |_{k+1}^2+| \vec{v}|_{k+2}^2 + |\vec{\alpha} |_{k+1}^2+| \vec{\alpha} |_{k+2}^2\bigg)^{1/2}.
	\end{align*}
\end{thm}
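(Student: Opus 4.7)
The plan is to carry out a standard Strang/Brezzi-type error analysis for saddle-point problems; the only non-routine features are the Nitsche boundary terms inside $\hat{a}$, the strengthened norm $\|\cdot\|_{\text{div},\Gamma_1}$, and the $\mathcal{O}(h)$ inf-sup constant $\theta$ already established in Theorem~\ref{thm5}. First, I would verify consistency: that the exact triple $(u,\vec{v},\vec{\alpha})$ satisfies \eqref{854}--\eqref{844} against every discrete test triple. Since $\vec{v}\cdot\vec{n}=0$ on $\Gamma_1$ by \eqref{!!}, two of the three added Nitsche terms vanish identically. The remaining term $-\int_{\Gamma_1}\nabla\cdot\vec{v}\,\vec{\psi}_h\cdot\vec{n}$ exactly cancels the boundary residue that appears when $\int_\Omega\nabla\cdot\vec{v}\,\nabla\cdot\vec{\psi}_h$ is integrated by parts against a test function whose normal trace is free on $\Gamma_1$; the $\Gamma_0$ contribution vanishes because $\nabla\cdot\vec{v}=\Delta u=0$ there, and the $\Gamma_3$ contribution vanishes because $\vec{\psi}_h\cdot\vec{n}=0$. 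The analogous check for the $b$-form uses $\vec{v}=\nabla u$ together with $u=0$ on $\Gamma_0\cup\Gamma_1$ and $\vec{\beta}_h\cdot\vec{n}=0$ on $\Gamma_3$. The outcome is the usual Galerkin orthogonality.

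Next I would split each error through the canonical interpolants: $e_u := u-I_h^k u$, $e_v := \vec{v}-\Pi_h^{k+1}\vec{v}$, $e_\alpha := \vec{\alpha}-\Pi_h^{k+1}\vec{\alpha}$ and corresponding discrete errors $\xi_u := I_h^k u-u_h$, $\xi_v := \Pi_h^{k+1}\vec{v}-\vec{v}_h$, $\xi_\alpha := \Pi_h^{k+1}\vec{\alpha}-\vec{\alpha}_h$. The discrete errors then satisfy a perturbed saddle-point system whose data is $\hat{a}((e_u,e_v),\cdot)+b(\cdot,e_\alpha)$ and $b((e_u,e_v),\cdot)$. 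Using the Brezzi ingredients already in hand from Theorem~\ref{thm5}---coercivity of $\hat{a}$ on the discrete kernel with constant $\mathcal{O}(1)$, continuity of $\hat{a}$ and $b$ in the appropriate norms with constants $\mathcal{O}(1)$, and the inf-sup bound with constant $\theta = h/\sqrt{1+\gamma_1(k+1)}$---standard mixed-method estimates deliver
\[
\|\xi_u\|_0 + \|\xi_v\|_{\text{div},\Gamma_1} \lesssim \|e_u\|_0 + \|e_v\|_{\text{div},\Gamma_1} + \|e_\alpha\|_{\text{div}},
\]
\[
\|\xi_\alpha\|_{\text{div}} \lesssim \theta^{-1}\bigl(\|e_u\|_0 + \|e_v\|_{\text{div},\Gamma_1} + \|e_\alpha\|_{\text{div}}\bigr),
\]
where the $\theta^{-1}$ factor is the one place the sub-optimal inf-sup degradation enters.

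Then I would bound the interpolation errors. Theorem~\ref{pro} controls the interior contributions at the optimal rate: $\|e_u\|_0 \lesssim h^{k+1}|u|_{k+1}$, $\|e_v\|_{\text{div}} \lesssim h^{k+1}(|\vec{v}|_{k+1}+|\vec{v}|_{k+2})$, and $\|e_\alpha\|_{\text{div}} \lesssim h^{k+1}(|\vec{\alpha}|_{k+1}+|\vec{\alpha}|_{k+2})$. The two boundary pieces of the strengthened norm are handled by the dedicated lemmas: Lemma~\ref{cor:inverse_trace_quasi_approx} yields $h^{-1/2}\|e_v\cdot\vec{n}\|_{0,\Gamma_1} \lesssim h^k|\vec{v}|_{k+2}$ and Lemma~\ref{Cor4} (which is the reason for the polygonal/polyhedral hypothesis) yields $h^{1/2}\|\nabla\cdot e_v\|_{0,\Gamma_1} \lesssim h^{k+1/2}|\vec{v}|_{k+2}$. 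Combining gives $\|e_v\|_{\text{div},\Gamma_1}=\mathcal{O}(h^k)$, so the two right-hand sides above are $\mathcal{O}(h^k)$ and $\mathcal{O}(h^{k-1})$ respectively; a final triangle inequality against the interpolation errors delivers the stated bounds. The elevated regularity requirement $u\in H^{k+5}$ is precisely what is needed so that $\vec{\alpha}=\nabla\Delta u - c_0\nabla u \in H^{k+2}$ and thus $|\vec{\alpha}|_{k+2}$ is finite when invoking Theorem~\ref{pro} for $\vec{\alpha}$.

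The principal obstacle is tracking the two independent sources of order loss and verifying that they combine as claimed: the strengthened norm itself only admits an $\mathcal{O}(h^k)$ approximation via Lemma~\ref{cor:inverse_trace_quasi_approx}, and the $\mathcal{O}(h)$ inf-sup constant contributes a further $h^{-1}$ in the estimate for $\xi_\alpha$. A secondary subtlety is arranging the Brezzi splitting so that the primal estimate does \emph{not} itself acquire a $\theta^{-1}$ factor; this requires exploiting coercivity of $\hat{a}$ on the discrete kernel before invoking the inf-sup dual bound, so that the inf-sup degradation is cleanly quarantined in the multiplier error alone. Care is also needed to confirm that the Nitsche terms contribute only $\mathcal{O}(1)$ to the continuity of $\hat{a}$ when measured in the strengthened norm, so that the continuity constant in the Brezzi bound does not re-introduce any $h$-dependence.
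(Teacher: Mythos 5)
Your outline follows the same skeleton as the paper's proof (consistency/Galerkin orthogonality using $\vec{v}\cdot\vec{n}=0$ on $\Gamma_1$ and the regularity of the exact solution, kernel coercivity of $\hat{a}$, the degraded $\mathcal{O}(h)$ inf-sup entering only the multiplier estimate, and the boundary pieces of the strengthened norm handled by Lemma~\ref{cor:inverse_trace_quasi_approx} and Lemma~\ref{Cor4}), but there is a genuine gap at the central step. You split the error through the canonical interpolants $(I_h^k u,\Pi_h^{k+1}\vec{v})$ and assert that ``standard mixed-method estimates'' give
\begin{equation*}
\|\xi_u\|_0+\|\xi_v\|_{\text{div},\Gamma_1}\ \lesssim\ \|e_u\|_0+\|e_v\|_{\text{div},\Gamma_1}+\|e_\alpha\|_{\text{div}}
\end{equation*}
with $h$-independent constants. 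That assertion is exactly what needs proof. The interpolant pair is not, in general, an element of the discrete kernel $\eta_h$, so the coercivity of $\hat{a}$ on $\eta_h$ cannot be applied to $(\xi_u,\xi_v)$; and the generic Brezzi route that compensates for this (approximating $(u,\vec{v})$ by elements of $\eta_h$ via the discrete inf-sup condition) costs a factor $\theta^{-1}\sim h^{-1}$ in the \emph{primal} estimate, which would yield only $\mathcal{O}(h^{k-1})$ for $(u,\vec{v})$, not the stated $\mathcal{O}(h^{k})$. Your stated remedy --- ``exploit coercivity on the discrete kernel before invoking the inf-sup dual bound'' --- is necessary but not sufficient: after doing so you still must exhibit a member of $\eta_h$ that approximates $(u,\vec{v})$ at the optimal rate, and you have not said how.

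The paper supplies precisely this missing device: it chooses $(\phi_h,\vec{\psi}_h)\in\eta_h$ to be the solution of the discrete mixed Poisson problem posed on $DG_k(\Omega,\tau_h)\times RT^{\Gamma_3}_{k+1}(\Omega,\tau_h)$ with source $-\Delta u$. Because that auxiliary problem is uniformly stable in the plain $\|\cdot\|_{0,\text{div}}$ norm (its inf-sup constant is $\mathcal{O}(1)$, unaffected by the Nitsche terms or the strengthened norm), the distance from $(u,\vec{v})$ to this kernel element is bounded by the global best-approximation error, $\mathcal{O}(h^{k+1})$; only afterwards are the $\Gamma_1$ boundary terms of the strengthened norm added, via Lemma~\ref{cor:inverse_trace_quasi_approx} and Lemma~\ref{Cor4}, which is where the loss to $\mathcal{O}(h^{k})$ occurs. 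With that kernel element in hand, coercivity of $\hat{a}$ on $\eta_h$, the identity $\hat{a}\big((u-u_h,\vec{v}-\vec{v}_h),\cdot\big)=b\big(\cdot,\vec{\alpha}_h-\vec{\alpha}\big)$ (your consistency step), and continuity of $b$ give the primal bound with $\mathcal{O}(1)$ constants; your multiplier argument with the $\theta^{-1}$ factor then goes through as in the paper. Without this (or an equivalent) construction of an optimal-order approximation inside $\eta_h$, the quarantining of the inf-sup degradation to the multiplier error is unsubstantiated, and the first estimate of the theorem is not established.
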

\begin{proof}
  To prove the error estimates, we proceed in a similar way to \cite[Section II.2.1]{MR1115205}, with modifications to account for the use of Nitsche-type penalty methods to weakly impose the boundary conditions on $\vec{v}_h$.

  Coercivity of $\hat{a}$ over $\eta_h$, where $\eta_h$ is the set defined in the proof of Theorem \ref{thm5}, leads to the fact that for all $(\phi_h,\vec{\psi}_h)\in \eta_h$, we have
		\begin{align*}
	C\|(\phi_h,\vec{\psi}_h)-(u_h,\vec{v}_h)\|^2_{0,\text{div},\Gamma_1}\, & \leq \hat{a}\big((\phi_h-u_h,\vec{\psi}_h-\vec{v}_h), (\phi_h-u_h, \vec{\psi}_h-\vec{v}_h)\big)\\
	& = \hat{a}\big((\phi_h-u,\vec{\psi}_h-\vec{v}), (\phi_h-u_h, \vec{\psi}_h-\vec{v}_h)\big)\\
	& + \hat{a}\big((u-u_h,\vec{v}-\vec{v}_h), (\phi_h-u_h, \vec{\psi}_h-\vec{v}_h)\big).
	\end{align*}  
	        Here, $C$ is the coercivity constant, which is $\mathcal{O}(1)$. Note, first, that $\hat{a}\big((\phi_h-u,\vec{\psi}_h-\vec{v}), (\phi_h-u_h, \vec{\psi}_h-\vec{v}_h)\big)$	is continuous in its arguments, with $\mathcal{O}(1)$ continuity constant, so the first term is readily bounded.
                We next establish that we have enough regularity on the solution $(u, \vec{v}, \vec{\alpha})$ of the continuum problem to show that
\begin{equation}\label{eq:a_to_b}
\hat{a}\big((u-u_h,\vec{v}-\vec{v}_h), (\phi_h-u_h, \vec{\psi}_h-\vec{v}_h)\big)=b\left((\phi_h-u_h,\vec{\psi}_h-\vec{v}_h),\vec{\alpha}_h-\vec{\alpha}\right),
\end{equation}
where we note that the regularity is required to both integrate by parts and enforce relationships between the components of the continuum solution below.
To establish \eqref{eq:a_to_b}, note first (from the definition of $\hat{a}$) that
\begin{align*}
\hat{a}\big((u,\vec{v}), (\phi_h-u_h, \vec{\psi}_h-\vec{v}_h)\big)=&\int_{\Omega}\nabla\cdot\vec{v}\,\nabla\cdot(\vec{\psi}_h-\vec{v}_h)+c_0\int_{\Omega}\vec{v}\cdot(\vec{\psi}_h-\vec{v}_h)\\
&+c_1\int_{\Omega}u(\phi_h-u_h)-\int_{\Gamma_1}\nabla\cdot\vec{v}\, (\vec{\psi}_h-\vec{v}_h)\cdot\vec{n}\\
=&-\int_{\Omega}\left(\nabla\nabla\cdot\vec{v}-c_0\vec{v}\right)\cdot(\vec{\psi}_h-\vec{v}_h)+c_1\int_{\Omega}u(\phi_h-u_h)\\
=&-\int_{\Omega}\vec{\alpha}\cdot(\vec{\psi}_h-\vec{v}_h)+c_1\int_{\Omega}u(\phi_h-u_h),
\end{align*}
where we invoke Corollary~\ref{co} to ensure that $\vec{v} = \nabla u$, $\vec{\alpha} = \nabla\nabla\cdot\vec{v} -c_0\vec{v}$, and the boundary conditions on $\Gamma_0$ and $\Gamma_3$ ensure the other boundary integrals from integration by parts vanish.  Next, note that $\left(u,\vec{v}\right)$ satisfies~\eqref{454}; taking $\vec{\psi}=\vec{0}$ and $\phi=\phi_h-u_h\in L^2(\Omega)$ in~\eqref{454} gives
\begin{eqnarray*}
c_1\int_{\Omega}u(\phi_h-u_h)+\int_{\Omega}(\phi_h-u_h)\nabla\cdot\vec{\alpha}=F(\phi_h-u_h).
\end{eqnarray*}
Combining these, we have that
\begin{eqnarray*}
	\hat{a}\big((u,\vec{v}), (\phi_h-u_h, \vec{\psi}_h-\vec{v}_h)\big)&=&-\int_{\Omega}\vec{\alpha}\cdot(\vec{\psi}_h-\vec{v}_h)+F(\phi_h-u_h)-\int_{\Omega}(\phi_h-u_h)\, \nabla\cdot\vec{\alpha}\\
	&=&F(\phi_h-u_h)-b\left((\phi_h-u_h,\vec{\psi_h}-\vec{v}_h),\vec{\alpha}\right).
\end{eqnarray*}
On the other hand, we have, from \eqref{854},           $\hat{a}\big((u_h,\vec{v}_h), (\phi_h-u_h, \vec{\psi}_h-\vec{v}_h)\big)=F(\phi_h-u_h)-b\left((\phi_h-u_h,\vec{\psi_h}-\vec{v}_h),\vec{\alpha}_h\right)$.  Taking these together establishes~\eqref{eq:a_to_b}.

Now, for any $\vec{\beta}_h\in RT_{k+1}^{\Gamma_3}(\Omega,\tau_h)$,
\begin{align*}
  b\left((\phi_h-u_h,\vec{\psi}_h-\vec{v}_h),\vec{\alpha}-\vec{\alpha}_h\right) &= b\left((\phi_h-u_h,\vec{\psi}_h-\vec{v}_h),\vec{\alpha}-\vec{\beta}_h\right)\\
  &\leq \|(\phi_h-u_h,\vec{v}_h-\vec{\psi}_h)\|_{0,\text{div},\Gamma_1}\|\vec{\alpha}-\vec{\beta}_h\|_{\text{div}}
\end{align*}
since $(u_h-\phi_h,\vec{v}_h-\vec{\psi}_h) \in \eta_h$ and by the continuity of $b$. Thus,   
		\begin{align*}
	C\|(\phi_h,\vec{\psi}_h)-(u_h,\vec{v}_h)\|_{0,\text{div},\Gamma_1}\leq&\|\hat{a}\|\|(\phi_h,\vec{\psi}_h)-(u,\vec{v})\|_{0,\text{div},\Gamma_1}+\|\vec{\alpha}-\vec{\beta}_h\|_{\text{div}}.
	\end{align*}  
	  We choose $(\phi_h,\vec{\psi}_h)\in \eta_h$ to be the solution of the mixed Poisson problem posed on $DG_{k}(\Omega,\tau_h)\times RT^{\Gamma_3}_{k+1}(\Omega,\tau_h)$ with source term $-\Delta u$. Stability of this mixed formulation leads to the estimate
	\begin{align*}
		\inf_{(\phi_h,\vec{\psi}_h)\in\eta_h}\|(u,\vec{v})-(\phi_h,\vec{\psi_h})\|_{0,\text{div}}\, & \leq \inf_{(\phi_h,\vec{\psi_h})\in DG_{k}(\Omega,\tau_{h})\times RT^{\Gamma_3}_{k+1}}
		\|(u,\vec{v})-(\phi_h,\vec{\psi_h})\|_{0,\text{div}}\\
		& \leq \|(u,\vec{v})-(I^k_hu,\Pi_h^k\vec{v})\|_{0,\text{div}} \\
		& \leq r_1h^{k+1}\left(|u|^2_{k+1}+|\vec{v}|^2_{k+1}+|\vec{v}|^2_{k+2}\right)^{\frac{1}{2}}, 
	\end{align*}
        for constant $r_1$.
	Note, however, that our analysis is posed in the stronger norm, $\|(\cdot,\cdot)\|_{0,\text{div},\Gamma_1}$. To analyse the error in this norm, we have
	\begin{align}
	\inf_{(\phi_h,\vec{\psi}_h)\in\eta_h}\|(u,\vec{v})-(\phi_h,\vec{\psi_h})\|_{0,\text{div},\Gamma_1} \, & \leq \|(u,\vec{v})-(I^k_hu,\Pi_h^k\vec{v})\|_{0,\text{div}}+\sqrt{h}\|\nabla\cdot(\vec{v}-\Pi^k_h\vec{v})\|_{0,\Gamma_1}\notag\\
	&+\frac{1}{\sqrt{h}}\|\vec{v}-\Pi^k_h\vec{v}\|_{0,\Gamma_1},\notag \\
	& \revise{\leq r_2 \left(h^{k+1}+h^k\right)\bigg(|u|_{k+1}^2 + |\vec{v} |_{k+1}^2+|\vec{v}|_{k+2}^2\bigg)^{1/2}},\label{eq4.7}
	\end{align}
    where $r_2$ is a positive constant independent of $h$, and the terms above are bounded using Theorem~\ref{pro} and Lemmas~\ref{cor:inverse_trace_quasi_approx} and~\ref{Cor4}, resulting in degraded convergence rates.
    Finally, using the triangle inequality, \eqref{eq4.7}, and Theorem \ref{pro} leads to the $\mathcal{O}(h^k)$ estimate on $(u,\vec{v})$. To find the error estimate for $\vec{\alpha}$, we first use the triangle inequality, writing
	\begin{eqnarray}
	\|\vec{\alpha}-\vec{\alpha}_h\|_{\text{div}}\leq \|\vec{\alpha }-\vec{\beta}_h\|_{\text{div}}+\|\vec{\beta}_h-\vec{\alpha}_h\|_{\text{div}}.
	\end{eqnarray}
        Choosing $\vec{\beta}_h$ to be the interpolant of $\vec{\alpha}$ gives an
	error bound for the first term that is $\mathcal{O}(h^{k+1})$, as in Theorem~\ref{pro}. We then use the discrete inf-sup condition to bound the second term, writing $\gamma= \sqrt{h^2\left(1+\gamma_1(k+1)\right)+\gamma_1(k+1)}$,
	\begin{align*}
		\|\vec{\beta}_h-\vec{\alpha}_h\|_{\text{div}} \, & \leq \frac{\gamma}{h}\sup_{\substack{(\phi_h,\vec{\psi}_h)\in DG_{k}(\Omega,\tau_h)\times RT^{\Gamma_3}_{k+1}(\Omega,\tau_h)\\ (\phi_h,\vec{\psi}_h)\neq (0,\vec{0})}}\frac{b\left((\phi_h,\vec{\psi}_h),\vec{\beta}_h-\vec{\alpha}_h\right)}{\|(\phi_h,\vec{\psi}_h)\|_{0,\text{div},\Gamma_1}}\\
		& = \frac{\gamma}{h}\sup_{\substack{(\phi_h,\vec{\psi}_h)\in DG_{k}(\Omega,\tau_h)\times RT^{\Gamma_3}_{k+1}(\Omega,\tau_h)\\ (\phi_h,\vec{\psi}_h)\neq (0,\vec{0})}}\frac{b\left((\phi_h,\vec{\psi}_h),\vec{\alpha}-\vec{\alpha}_h\right)+b\left((\phi_h,\vec{\psi}_h),\vec{\beta}_h-\vec{\alpha}\right)}{\|(\phi_h,\vec{\psi}_h)\|_{0,\text{div},\Gamma_1}}\\
		& = \frac{\gamma}{h}\sup_{\substack{(\phi_h,\vec{\psi}_h)\in DG_{k}(\Omega,\tau_h)\times RT^{\Gamma_3}_{k+1}(\Omega,\tau_h)\\ (\phi_h,\vec{\psi}_h)\neq (0,\vec{0})}}\frac{\hat{a}\left((u-u_h,\vec{v}-\vec{v}_h),(\phi_h,\vec{\psi}_h)\right)+b\left((\phi_h,\vec{\psi}_h),\vec{\alpha}-\vec{\beta}_h\right)}{\|(\phi_h,\vec{\psi}_h)\|_{0,\text{div},\Gamma_1}}\\
		& \leq \frac{\gamma\|\hat{a}\|}{h}\|(u,\vec{v})-(u_h,\vec{v}_h)\|_{0,\text{div},\Gamma_1}+\frac{\gamma}{h}\|\vec{\alpha}-\vec{\beta}_h\|_{\text{div}},
	\end{align*}
        where we use the continuity of $\hat{a}$ and $b$ in the final inequality.
	The error estimate for $(u,\vec{v})$ above implies that the convergence rate of $\vec{\alpha}$ is $\mathcal{O}(h^{k-1})$.
\end{proof}

\section{Monolithic multigrid preconditioner}
\label{sec:multigrid}
We now consider the development of effective linear solvers for the resulting discretized systems.  We first consider the case where $\partial \Omega=\Gamma_0\cup\Gamma_2\cup\Gamma_3$ with constants $c_0, c_1 > 0$; however, the same arguments allow the case where $c_0=0$ if $\Gamma_2$ is empty. The discretizations above lead to block-structured linear systems that
can be written as
\begin{eqnarray}\label{system5.1}
\begin{bmatrix}
A_{11}&0& B_1^T  \\
0&A_{22}& B_2^T\\
B_1&B_2&0
\end{bmatrix}
\begin{bmatrix}
u\\\vec{v}
\\\vec{\alpha}
\end{bmatrix}=
\begin{bmatrix}
f_1\\f_2\\g
\end{bmatrix},
\end{eqnarray}
where $[u, \vec{v}, \vec{\alpha}]^T$ now refers to the vector of coefficients of the finite element basis functions.
For the weak form in \eqref{454}-\eqref{444}, $A_{11}$ is a mass matrix representing the discrete version of
the $L^2(\Omega)$ inner product on $DG_{k}(\Omega,\tau_h)$ weighted by $c_1$, $A_{22}$ is the
discrete version of the $H(\text{div}; \Omega)$ inner product on
$RT_{k+1}(\Omega,\tau_h)$ with weight $c_0$ on the $L^2(\Omega)$ term, $B_1$ is the weak gradient operator, and $B_2$ is
the $L^2(\Omega)$ inner product on $RT_{k+1}(\Omega,\tau_h)$. 

In order to efficiently solve such linear systems, we consider
preconditioned Krylov subspace methods. Two families of
preconditioners are popular for such block-structured problems.  Block factorization methods
\cite{MR2155549,MR4016136} approximate Gaussian elimination applied to the blocks of the discretization matrix, writing
\[
\mathcal{A}=\begin{bmatrix}
A& B^T\\ B&-C
\end{bmatrix}=\begin{bmatrix}
I& 0\\BA^{-1}&I 
\end{bmatrix}\begin{bmatrix}
A& 0\\0&S 
\end{bmatrix}\begin{bmatrix}
I& A^{-1}B^T \\0 &I 
\end{bmatrix},
\]
where $S=-(C+BA^{-1}B^T)$ is the Schur complement of $A$, assuming $A$ is invertible. Natural
block preconditioners are of block diagonal, $\textbf{P}_d$, and block
triangular, $\textbf{P}_t$ form, given as 
\[
\textbf{P}_d =\begin{bmatrix}
A&0\\ 0&\hat{S}
\end{bmatrix}, \ \ \ \ \textbf{P}_t =\begin{bmatrix}
A&B^T\\ 0&-\hat{S}
\end{bmatrix},
\]
where $\hat{S}$ is some approximation of $S$. The quality of these
preconditioners naturally depends on the approximation $\hat{S} \approx S$, and their
efficiency depends on the
availability of effective
fast solvers for the linear systems involving $A$ and
$\hat{S}$. Preliminary experiments in this direction revealed some
difficulties approximating the Schur complement in the presence of the Nitsche terms that would require
further investigation. Therefore, we focus on the development of efficient
monolithic multigrid preconditioners~\cite{MR1762024, MR3639325} in
this setting.

We use standard multigrid $V$-cycles with a direct solve on the
coarsest level (taken in the examples to be the mesh with $h=1/4$ for problems on unit-length
domains), and factor-2 coarsening between all grids. These cycles
are employed as preconditioners for FGMRES~\cite{MR1204241}.  We use standard
interpolation operators, partitioned based on the discretized
fields, of the form
\[
P=\begin{bmatrix}
I^k_h &&\\
&\Pi_h^{k+1}&\\
&&\Pi_h^{k+1}
\end{bmatrix},
\]
where the blocks $I^{k}_h$ and $\Pi^{k+1}_h $ are the natural
finite-element interpolation operators for the $DG_{k}(\Omega,\tau_h)$ and
$RT_{k+1}(\Omega,\tau_h)$ spaces, respectively. Coarse-grid operators are
formed by rediscretization, which is equivalent to a Galerkin
coarse-grid operator for constant $c_0, c_1 \in \mathbb{R}$.

The main challenge with monolithic multigrid methods is to develop an effective
relaxation method. In this work, as relaxation we make use of an additive overlapping Schwarz relaxation, which can be considered as a variant of the family of Vanka relaxation schemes originally proposed in
\cite{MR848451} to solve the saddle-point systems that arise from the
marker-and-cell (MAC) finite-difference discretization of the
Navier-Stokes equations.  Vanka relaxation methods encompass a variety
of overlapping multiplicative or additive Schwarz methods applied to
saddle-point problems, in which the subdomains are chosen so that the
corresponding subsystems are also saddle-point systems. 
Vanka-type relaxation has been used extensively for finite-element
discretizations, such as the discretizations arising from the Stokes
equations \cite{MR2840198}, magnetohydrodynamics \cite{adler2021}, and
liquid crystals \cite{MR3504546}.  Recently, a general-purpose
implementation of patch-based relaxation schemes, including Vanka relaxation, was provided in
\cite{Pcpatch}, which we employ via the finite-element discretization package Firedrake
\cite{rathgeber2017firedrake, kirby2018solver}.

Like other Schwarz methods, Vanka relaxation can be understood algebraically. Denoting
the set of all degrees of freedom in the problem by $Q$, we
partition $Q$ into $s$ overlapping subdomains or patches, $Q = \cup_{i=1}^s Q_i$,
and consider the stationary additive iteration with updates given by
\begin{equation*}
x\leftarrow x+ \sum_{i=1}^{s}R_i^T\mathcal{A}^{-1}_{ii}R_i(b-\mathcal{A}x), 
\end{equation*}    
where $\mathcal{A}x=b$ represents the linear system to be solved, 
$R_{i}$ is the injection operator from a global vector, $x$,
to a local vector, $x_{i}$, on $Q_i$ (with $R_{i}x=x_i$), and
$\mathcal{A}_{ii}=R_{i}\mathcal{A}R_{i}^T$ is the restriction of the
global system $\mathcal{A}$ to the degrees of freedom in $Q_i$.  While
inexact solution of the subdomain problems is relevant when the cardinality
of $Q_i$ is large, we consider small subdomain sizes, where direct
solution remains practical.  We construct the patches, $\{Q_i\}$,
topologically, as the so-called star patch around each vertex \cite{Pcpatch}
in the mesh, taking all degrees of freedom on vertex $i$, on edges adjacent to vertex
$i$, and on all cells adjacent to
vertex $i$ to form $Q_i$.  Figure \ref{fig:patches} shows the subdomain
construction around a typical vertex for the cases of discretization
using $DG_0$ and $RT_1$ elements (left) and using $DG_1$ and
$RT_2$ elements (right), noting that the $RT_k$ degrees of freedom
for both $\vec{v}$ and $\vec{\alpha}$ are included in the patch (and
are collocated on the mesh).  Rather than use the stationary iteration
given above, we use two steps of GMRES preconditioned by the Schwarz
method as the (pre- and post-) relaxation in the multigrid cycle on
each level.
\begin{figure}
	\begin{subfigure}{.5\textwidth}
		\centering
		\begin{tikzpicture}
		\filldraw [black] 
		(2/3,2/3) circle (2pt)
		(2/3,8/3) circle (2pt)
		(-2/3,4/3) circle (2pt)
		(4/3,4/3) circle (2pt)
		(-4/3,8/3) circle (2pt)
		(-2/3,10/3) circle (2pt)
		(-2/3,4/3) circle (2pt);
		\draw (0,0) -- (2,0) -- (0,2) -- (0,0);
		\draw (0,2) -- (2,2) -- (0,4) -- (0,2);
		\draw (2,0) -- (2,2);
		\draw (0,0) -- (0,2) -- (-2,2) -- (0,0);
		\draw (0,2) -- (0,4) -- (-2,4) -- (0,2);
		\draw (0,2) -- (0,4) -- (-2,4) -- (0,2);
		\draw (-2,2) -- (-2,4);
		\draw [->,line width=1.5] (1,1.7) -- (1,2.3);
		\draw [->,line width=1.5] (-1,1.7) -- (-1,2.3);
		\draw [->,line width=1.5] (-.3,1) -- (.3,1);
		\draw [->,line width=1.5] (-.3,3) -- (.3,3);
		\draw [->,line width=1.5] (.8,.8) -- (1.2,1.2);
		\draw [->,line width=1.5] (-1.2,2.8) -- (-.8,3.2);
		
		\end{tikzpicture}
		\label{fig:sfig1}
	\end{subfigure}%
	\begin{subfigure}{.5\textwidth}
		\centering
		
		\begin{tikzpicture}
		
		\filldraw [black] 
		(2/3,2/3) circle (2pt)
		(2/3,8/3) circle (2pt)
		(-2/3,4/3) circle (2pt)
		(4/3,4/3) circle (2pt)
		(-4/3,8/3) circle (2pt)
		(-2/3,10/3) circle (2pt)
		(-2/3,4/3) circle (2pt);
		\filldraw [black] 
		(.15,.15) circle (2pt)
		(.15,1.70) circle (2pt)
		(1.70,.15) circle (2pt)
		(.15,2.15) circle (2pt)
		(1.70,2.15) circle (2pt)
		(.15,3.70) circle (2pt)
		(1.85,.30) circle (2pt)
		(1.85,1.85) circle (2pt)
		(-.15,.30) circle (2pt)
		(-.15,1.85) circle (2pt)
		(-1.70,1.85) circle (2pt)
		(.3,1.85) circle (2pt)
		(-.3,2.15) circle (2pt)
		(-1.85,2.15) circle (2pt)
		(-1.85,3.70) circle (2pt)
		(-.15,2.30) circle (2pt)
		(-.15,3.85) circle (2pt)
		(-1.70,3.85) circle (2pt);
		(0.6,0.7) circle (4pt)
		(0.8,0.5) circle (4pt)
		(1.4,1.3) circle (4pt)
		(1.2,1.5) circle (4pt)
		5(.6,2.7) circle (4pt)
		(0.8,2.5) circle (4pt)
		(-0.6,1.3) circle (4pt)
		(-0.8,1.5) circle (4pt)
		(-1.4,2.7) circle (4pt)
		(-1.2,2.5) circle (4pt)
		(-0.6,3.3) circle (4pt)
		(-0.8,3.5) circle (4pt);
		\draw (.1,.1) -- (.1,.2) -- (.2,.2) -- (.2,.1);
		\draw (0,0) -- (2,0) -- (0,2) -- (0,0);
		\draw (0,2) -- (2,2) -- (0,4) -- (0,2);
		\draw (2,0) -- (2,2);
		\draw (0,0) -- (0,2) -- (-2,2) -- (0,0);
		\draw (0,2) -- (0,4) -- (-2,4) -- (0,2);
		\draw (0,2) -- (0,4) -- (-2,4) -- (0,2);
		\draw (-2,2) -- (-2,4);
		\draw [->,line width=1.5] (.9,1.7) -- (.9,2.3);
		\draw [->,line width=1.5] (1.1,1.7) -- (1.1,2.3);
		\draw [->,line width=1.5] (-.9,1.7) -- (-.9,2.3);
		\draw [->,line width=1.5] (-1.1,1.7) -- (-1.1,2.3);
		\draw [->,line width=1.5] (-.3,.9) -- (.3,.9);
		\draw [->,line width=1.5] (-.3,1.1) -- (.3,1.1);
		\draw [->,line width=1.5] (-.3,2.9) -- (.3,2.9);
		\draw [->,line width=1.5] (-.3,3.1) -- (.3,3.1);
		\draw [->,line width=1.5] (1.0,.6) -- (1.4,1.0);
		\draw [->,line width=1.5] (.8,.8) -- (1.2,1.2);
		
		\draw [->,line width=1.5] (-1.0,2.6) -- (-.6,3.0);
		\draw [->,line width=1.5] (-1.2,2.8) -- (-.8,3.2);
		\draw [fill, draw=black] (.45,.45) rectangle (.6 ,.6);
		\draw [fill, draw=black] (.6,.6) rectangle (.75,.75);
		\draw [fill, draw=black] (1.55,1.55) rectangle (1.40,1.40);
		\draw [fill, draw=black] (1.4,1.4) rectangle (1.25,1.25);
		\draw [fill, draw=black] (-.45,1.55) rectangle (-.6,1.40);
		\draw [fill, draw=black] (-.6,1.40) rectangle (-.75,1.25);
		\draw [fill, draw=black] (.45,2.45) rectangle (.6,2.6);
		\draw [fill, draw=black] (.6,2.6) rectangle (.75,2.75);
		\draw [fill, draw=black] (-.45,3.55) rectangle (-.6,3.40);
		\draw [fill, draw=black] (-.6,3.40) rectangle (-.75,3.25);
		\draw [fill, draw=black] (-1.55,2.45) rectangle (-1.40,2.60);
		\draw [fill, draw=black] (-1.40,2.60) rectangle (-1.25,2.75);
		\end{tikzpicture}

		\label{fig:sfig2}
	\end{subfigure}
	\caption{Star patches for $DG_{0}-RT_{1}$(left) and $DG_{1}-RT_{2}$(right) discretizations. Filled discs denote $DG$ degrees of freedom, while arrows and filled squares denote edge and interior $RT$ degrees of freedom, respectively.}
	\label{fig:patches}
\end{figure}
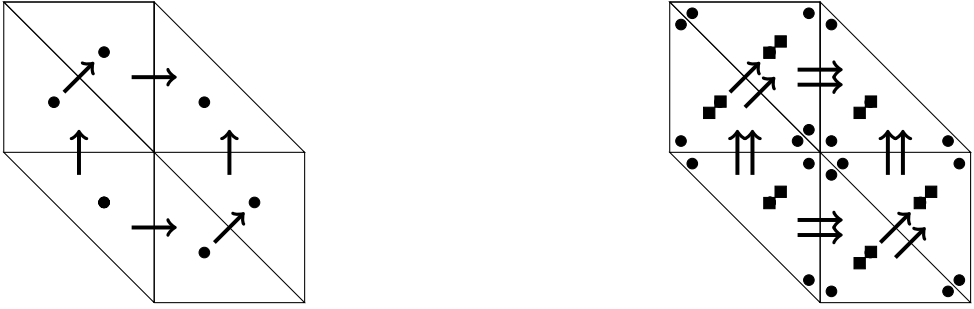 

For the case where $\partial\Omega=\Gamma_0\cup\Gamma_1$ with $c_0=c_1=0$, a modification of the above solver framework is needed. Note that, in this case, while the linear system is well-posed by Theorem \ref{thm5}, the modified weak form in \eqref{854}-\eqref{844} has the same structure as \eqref{system5.1} except that $A_{11}$ becomes the zero matrix and Nitsche boundary terms appear in $A_{22}$. 
The approach above performs poorly in this case, as might be expected, particularly with the zero block for $A_{11}$.  To overcome this, we adopt the idea of preconditioning the resulting discretization matrix using an auxiliary operator that corresponds to the discretization of another PDE, related to the inner products in which the PDE is analyzed \cite{mardal2011,kirby2010}.  Here, since the biharmonic operator is equivalent to the norm in Lemma \ref{lem:hilbert}, we add a scaling factor times the $L^2(\Omega)$ inner product in the (1,1) block of the auxiliary operator. Preliminary experiments (reported below in Table \ref{bih}) indicate that choosing the scaling factor to be $\mathcal{O}\left(h^{-1}\right)$ gives better performance than $\mathcal{O}(1)$ values.

\section{Numerical experiments}
\label{sec:numerical}
In this section, we present numerical experiments to measure
finite-element convergence rates and demonstrate the performance of
the proposed monolithic multigrid method, stopping when either the residual norm or its relative reduction is less than $10^{-8}$.  These numerical results
were calculated using the finite-element discretization package
Firedrake \cite{rathgeber2017firedrake}, which offers close
integration with PETSc for the linear solvers \cite{balay2018petsc,
	kirby2018solver}.  The relaxation scheme is implemented
using the PCPATCH framework \cite{Pcpatch}.  All numerical
experiments were run on a workstation with dual 8-core Intel Xeon 1.7
GHz CPUs and 384 GB of RAM.  For reproducibility, the codes used to
generate the numerical results, as well as the major Firedrake
components needed, have been archived on Zenodo \cite{zenodo/Firedrake-20221011.0}.
To measure solution quality, we make use of the method of manufactured
solutions, prescribing forcing terms and boundary data to exactly
match those of a known solution, $u_{ex}$.  Taking uniform meshes, as
described below, with representative mesh size $h$, we define $u_h$ to
be the finite-element solution on the mesh, and define the
approximation error $e_h = u_{ex} - u_h$.  With this, we can define
the relative error in the $L^2(\Omega)$ norm on mesh $h$ as $R_e(h)=\frac{\|e_h\|_0}{\|u_{ex}\|_0}$. As needed, we extend these definitions to other quantities, such as
the $L^2(\Omega)$ error in $\vec{v}$ and $\vec{\alpha}$, the error
in $\vec{v}$ and $\vec{\alpha}$ in the $H(\text{div})$ (semi-)norm, and the
error in any boundary terms included in the norms used above. 

\subsection{2D Experiments}
In $2D$, we consider experiments on the unit square using uniform
``right'' triangular meshes (Figure \ref{erd}, left) and on the
L-shaped domain with vertices $(0,0),\ (0,1),\ (\frac{1}{2},
1),\ (\frac{1}{2},\ \frac{1}{2}),\ (1,\frac{1}{2}),\ \text{and}\ (1,0)$
using uniform ``crossed'' triangular meshes (Figure \ref{erd},
right). The discretizations
proposed here have larger numbers of degrees of freedom and nonzeros
in their matrices than are typically encountered with Lagrange
elements for second-order problems. We therefore record the matrix
dimensions, $N$, and number of nonzeros, nnz, in Table \ref{tabl} for
the discretizations on the unit square, for several levels of
refinement, $h$, and orders of the discretization, $k$.
In all figures, we use blue, red, and green lines to present results for $k=0,1,2$, respectively. 
\begin{figure}
	\centering
	\includegraphics[width=0.43\linewidth]{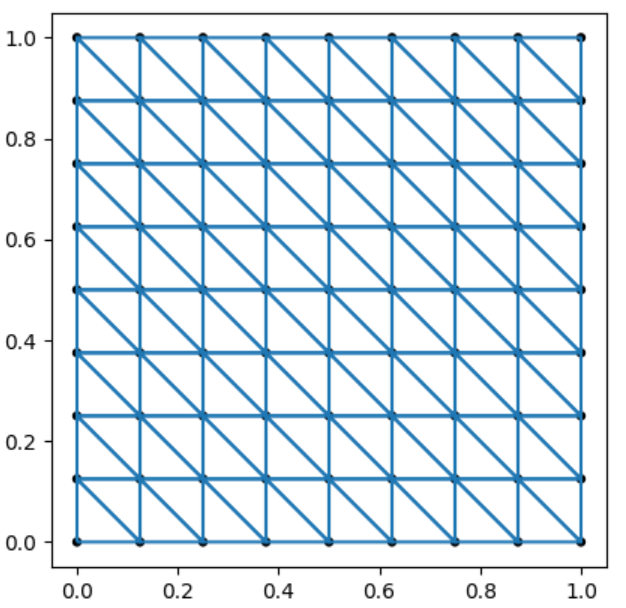}
	\includegraphics[width=0.43\linewidth]{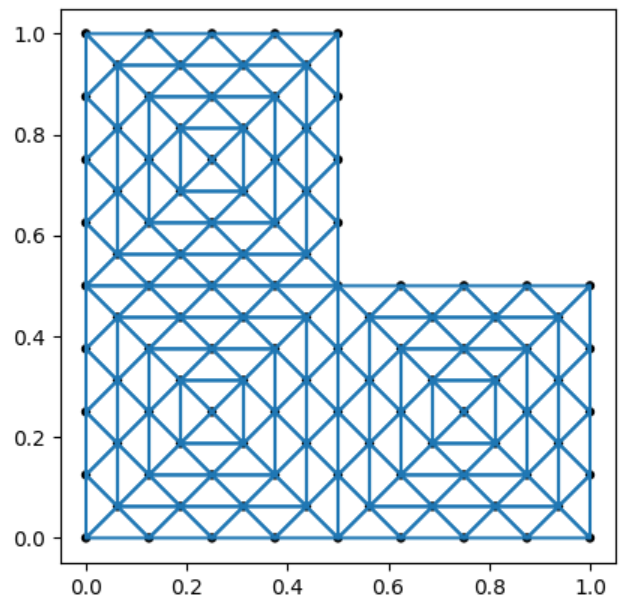}
	\caption{Left: unit square domain with uniform right triangular
		mesh ($h=\frac{1}{8}$). Right: L-shaped domain with uniform
		crossed triangular mesh $(h=\frac{1}{8})$.} \label{erd}
\end{figure}
\begin{table}
	\centering
	\caption{Dimension, $N$, and the number of nonzeros, nnz, in the system
		matrix for $u\in DG_k(\Omega,\tau_h), \ \vec{v}\in
		RT_{k+1}(\Omega,\tau_h),\ \vec{\alpha}\in RT_{k+1}(\Omega,\tau_h)$ on uniform meshes of
		the unit square domain in 2D.}
	\begin{tabular}{c |c c| c  c|c  c}  
		\toprule
		\multicolumn{1}{c|} {} &\multicolumn{2}{c|} {$k=0$}& \multicolumn{2}{c|}{ $k=1$}& \multicolumn{2}{c}{$k=2$}\\ \midrule
		$1/h$&$N$&nnz&$N$&nnz& $N$&nnz  \\
		$2^6$&33,024  &352,768 & 107,008&2,762,752 & 221,952&10,179,072 \\
		$2^7$&131,584   &1,410,048 &427,008 & 11,046,912& 886,272&40,707,072 \\
		$2^8$&525,312 &5,638,144 &1,705,984& 44,179,456 &3,542,016 &162,809,856 \\
		$2^9$&2,099,200 &22,548,480 &6,819,840 &176,701,440 &14,161,920  &651,202,560\\ \bottomrule
	\end{tabular}\label{tabl}
\end{table}

\revisenew{
We present two-dimensional numerical experiments in three parts.  First, in Subsection~\ref{smooth}, we investigate the convergence of our finite-element methods for sufficiently smooth exact solutions (as covered by our analysis), including the classical biharmonic problem with the clamped boundary conditions (requiring the use of the Nitsche penalty method).  Then, in Subsection~\ref{nonsmooth}, we consider the case of exact solutions, $u_{ex}\notin H^4(\Omega)$, where convergence rates require further analysis. We demonstrate that, in certain cases, our method does not converge to the exact solution of the biharmonic equation if that solution does not lie in $H^4(\Omega)$ (i.e.~in some cases it is not merely a matter of degraded convergence rates). Finally, in Subsection~\ref{Solver}, we provide a comparison between direct solvers and the multigrid-preconditioned FGMRES algorithm for $H^2$ elliptic problems with $c_0 \ge 0$ and $c_1 > 0$, as well as multigrid-preconditioned FGMRES iteration counts for the biharmonic problem with clamped boundary conditions, which is challenging due to the inclusion of the Nitsche boundary terms.} 

\subsubsection{\revisenew{Smooth solutions}}  \label{smooth}
\revisenew{Here, we consider the unit square domain with two exact solutions, one that is smooth (in $C^{\infty}(\Omega)$), $u_{1ex}=\sin(2\pi x)\cos(3\pi y)$, and one that is in $H^4(\Omega)$, but not $H^5(\Omega)$, $u_{2ex}=\left(\sin(2\pi x)+x^{\frac{9}{2}}\right)\left(\cos(3\pi y)+y^{\frac{17}{4}}\right)$.
In the results that follow, we plot}
$\log(R_e(\cdot,h))$ against $\log_2(1/h)$, so that the slopes of the lines represent the experimentally measured convergence rates for $(u,\vec{v},\vec{\alpha})
\in DG_k (\Omega,\tau_h)\times RT_{k+1}(\Omega,\tau_h) \times RT_{k+1}(\Omega,\tau_h)$, for $k\in\{0,1,2\}$.
\revisenew{In the examples in this subsection, filled discs denote the measured $(u,\vec{v})$ error in the $L^2\times H(\text{div})$ norm, and squares denote the error in $\vec{\alpha}$ measured in the $H(\text{div})$ norm.}

Let $\partial \Omega=\Gamma_N\cup \Gamma_S\cup\Gamma_E\cup\Gamma_W$, meaning the North, South, East, and West faces of the square.  Figure \ref{tab:ex1_uv} presents results for the problem with $c_0 = 0$ and $c_1 = 1$ with boundary
$\partial\Omega = \Gamma_0\cup \Gamma_3$, where $\Gamma_0=\Gamma_E\cup\Gamma_W$, and $\Gamma_3=\Gamma_N\cup\Gamma_S$. 
Figure \ref{tab:ex2_uv} presents results when $c_0 = 2$, $c_1 = 4$, and $\partial\Omega=\Gamma_0\cup\Gamma_2\cup\Gamma_3$ with $\Gamma_0=\Gamma_E\cup\Gamma_W$, $\Gamma_2=\Gamma_S$, and $\Gamma_3=\Gamma_N$.  Since we omit $\Gamma_1$ from these examples, there is no need to use the Nitsche boundary terms considered in that case.   We note that we see optimal convergence for all $k$ with
$(u, \vec{v})$ in the $L^2\times H(\text{div})$ norm and
$\vec{\alpha}$ in the $H(\text{div})$ norm for the smooth exact solution, $u_{1ex}$, on the left of these figures. Considering the $H^4(\Omega)$ solution, $u_{2ex}$, on the right, we see optimal convergence for small $k$, but degraded performance for $k=2$, where the lack of smoothness in $\alpha$ is reflected in the numerical results. These results are  consistent with the analysis in Corollaries \ref{cor4} and \ref{thm4}, although we note that the $H^4(\Omega)$ case outperforms the expected convergence from the analysis. 
\begin{figure}
	\begin{subfigure}{.5\textwidth}
		\centering
		
		\begin{tikzpicture}[scale=.75]
		\begin{semilogyaxis}[
		xlabel={$\log_2(1/h)$},
				xtick={6,7,8,9},
		ylabel={$\log\left(R_e(\cdot,h)\right)$},
		]
		\addplot[
		mark=*,mark options={scale=1,solid},color=seabornblue
		]
		coordinates {
			(6, 0.04168660182971993)
			(7, 0.02084600736406262)
			(8, 0.010423342251395887)
			(9, 0.005211713455081824)
		};
		\addplot[
		mark=square,mark options={scale=2,solid},color=seabornblue
		]
		coordinates {
			(6, 0.041674122665203685)
			(7, 0.0208444467964996)
			(8, 0.010423146921620175)
			(9, 0.005211688911884107)
		};
		\addplot[dash pattern=on 4pt off 6pt on 8pt off 8pt, domain=6:9] {8/(2^x)};
		\legend
		{,,$m=-1$,,,$m=-2$,,,$m=-3$}
		\addplot[
		mark=*,mark options={scale=1,solid},color=seabornred
		]
		coordinates {			
			(6, 0.0009934042545686696)
			(7, 0.0002484407758985374)
			(8, 6.211581428655616e-05)
			(9, 1.5529306149895575e-05)
		};
		\addplot[
		mark=square,mark options={scale=2,solid},color=seabornred
		]
		coordinates {			
			(6, 0.000993361204551166)
			(7,0.0002484380691961439)
			(8,  6.211563951158168e-05)
			(9, 1.552929378853e-05)
		};
		\addplot[dashed, domain=6:9] {10/(2^x)^2};
		\addplot[
		mark=*,mark options={scale=1,solid},color=seaborngreen
		]
		coordinates {			
			(6, 1.7204514705148833e-05)
			(7, 2.151353615878747e-06)
			(8, 2.6894389343578745e-07)
			(9, 3.3618760241465344e-08)
		};
		\addplot[
		mark=square,mark options={scale=2,solid},color=seaborngreen
		]
		coordinates {			
			(6, 1.7204324347483587e-05)
			(7, 2.1513474008521868e-06)
			(8, 2.6894389343578745e-07)
			(9, 3.3618760241465344e-08)
		};		
		
		\addplot[dotted, domain=6:9]{12/(2^x)^3};
		
		\end{semilogyaxis}
		\end{tikzpicture}
		
	\end{subfigure}%
	\begin{subfigure}{.5\textwidth}
		\centering
		\begin{tikzpicture}[scale=.75]
		\begin{semilogyaxis}[
		xlabel={$\log_2(1/h)$},
				xtick={6,7,8,9},
		]
		
		\addplot[
		mark=*,mark options={scale=1,solid},color=seabornblue
		]
		coordinates {

			(6, 0.04418416336933884)
			(7, 0.022095624009133265)
			(8, 0.011048254039679838)
			(9, 0.005524182258325259)
		};
		
		\addplot[ mark=square,mark options={scale=2,solid},color=seabornblue
		]
		coordinates {		
			(6, 0.04304926707886965)
			(7, 0.021532783713325822)
			(8, 0.010767421194979759)
			(9, 0.005383842799496321)	
			
		};
		\addplot[dash pattern=on 4pt off 6pt on 8pt off 8pt, domain=6:9] {8/(2^x)};
		\addplot[
		mark=*,mark options={scale=1,solid},color=seabornred
		]
		coordinates {		
			(6, 0.001101087733505842)
			(7, 0.00027537810213724815)
			(8, 6.885126400090261e-05)
			(9, 1.7213251115044502e-05)	
			
		};
		
		\addplot[
		mark=square,mark options={scale=2,solid},color=seabornred
		]
		coordinates {		
			(6, 0.001052032676333308)
			(7, 0.000263247412433017)
			(8, 6.589646536220787e-05)
			(9, 1.653802701194845e-05)	
			
		};
		\addplot[dashed, domain=6:9] {12/((2^x)^2)};
		\addplot[
		mark=*,mark options={scale=1,solid},color=seaborngreen
		]
		coordinates {		
			(6, 1.9524209570861605e-05)
			(7, 2.441625577514291e-06)
			(8, 3.0547135111364527e-07)
			(9, 3.8520883573613846e-08)	
			
		};

		\addplot[
		mark=square,mark options={scale=2,solid},color=seaborngreen
		]
		coordinates {		
			(6, 1.875559585901144e-05)
			(7, 3.0365457940047264e-06)
			(8, 1.1921349012360247e-06)
			(9, 1.1921349012360247e-06)	
			
		};
		\addplot[dotted, domain=6:9] {12/((2^x)^3)};
		\legend
		{,,$m=-1$,,,$m=-2$,,,$m=-3$}
		\end{semilogyaxis}
		\end{tikzpicture}
	\end{subfigure}
	\caption{Relative approximation errors and rate of convergence
		for the unit square domain with $c_0=0, \ c_1=1, \ \partial\Omega = \Gamma_0\cup \Gamma_3$ and $(u,\vec{v},\vec{\alpha}) \in DG_k(\Omega,\tau_h)\times RT_{k+1}(\Omega,\tau_h)\times RT_{k+1}(\Omega,\tau_h)$, $k\in{0,1,2}$. Blue, red, and green lines present results for $k=0,1,2$, respectively. \revise{Filled discs denote the measured $(u,\vec{v})$ error in the $L^2\times H(\text{div})$ norm, and squares denote the error in $\vec{\alpha}$ measured in the $H(\text{div})$ norm.  Reference lines of slopes -1, -2, and -3 approximate convergence rates.} Left: smooth solution $u_{ex} = u_{1ex}$. Right: rough solution $u_{ex} = u_{2ex}$.}\label{tab:ex1_uv}
\end{figure}
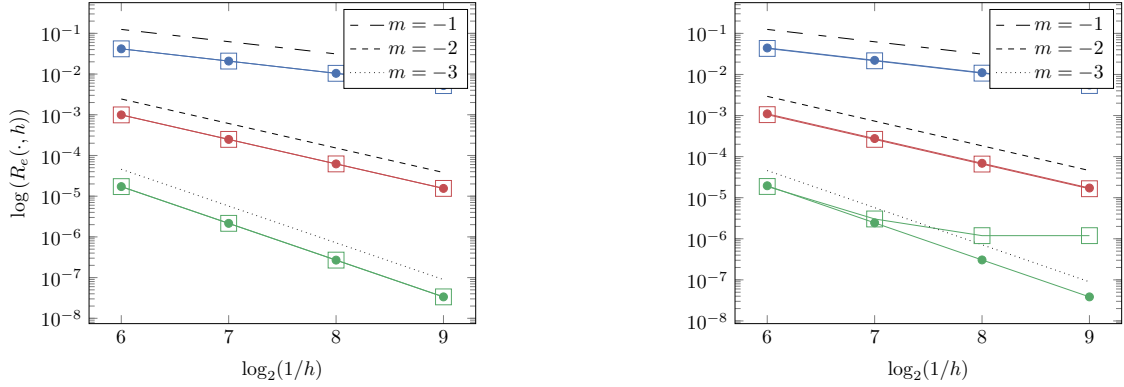

\begin{figure}
	\begin{subfigure}{.5\textwidth}
		\centering
		\begin{tikzpicture}[scale=.75]
		\begin{semilogyaxis}[
		xlabel={$\log_2(1/h)$},
				xtick={6,7,8,9},
		ylabel={$\log\left(R_e(\cdot,h)\right)$},
		]
		\legend
		{,,$m=-1$,,,$m=-2$,,,$m=-3$}
		\addplot[
		mark=*,mark options={scale=1,solid},color=seabornblue
		]
		coordinates {	
			(6, 0.041748737706979774)
			(7, 0.02085378979439833)
			(8, 0.010424315480717431)
			(9, 0.0052118351202492695)
		};
		
		\addplot[
		mark=square,mark options={scale=2,solid},color=seabornblue
		]
		coordinates {
			
			(6, 0.04167412298752674)
			(7, 0.020844446836977962)
			(8, 0.010423146926685854)
			(9, 0.0052116889125174955)
		};
		\addplot[dash pattern=on 4pt off 6pt on 8pt off 8pt, domain=6:9] {8/(2^x)};
		\addplot[
		mark=*,mark options={scale=1,solid},color=seabornred
		]
		coordinates {
			
			(6, 0.001010682242966361)
			(7, 0.0002506103218890227)
			(8, 6.238757052638014e-05)
			(9, 1.5563309339865328e-05)
		};
		\addplot[
		mark=square,mark options={scale=2,solid},color=seabornred
		]
		coordinates {
			
			(6, 0.0009933612044264427)
			(7, 0.00024843806918863683)
			(8, 6.211563951112455e-05)
			(9, 1.5529293788503894e-05)
		};
		\addplot[dashed, domain=6:9] {10/(2^x)^2};
		\addplot[
		mark=*,mark options={scale=1,solid},color=seaborngreen
		]
		coordinates {
			
			(6, 1.7339235424124668e-05)
			(7, 2.1597897723449984e-06)
			(8, 2.694716582590668e-07)
			(9, 3.365175472512711e-08)
		};
		\addplot[
		mark=square,mark options={scale=2,solid},color=seaborngreen
		]
		coordinates {
			
			(6, 1.720432434699534e-05)
			(7, 2.1513474016989567e-06)
			(8, 2.6894372702654923e-07)
			(9, 3.3626073053322385e-08)
		};
		\addplot[dotted, domain=6:9] {14/(2^x)^3};
		\end{semilogyaxis}
		\end{tikzpicture}
	\end{subfigure}
	\begin{subfigure}{.5\textwidth}
		\centering
		\begin{tikzpicture}[scale=.75]
		\begin{semilogyaxis}[
		xlabel={$\log_2(1/h)$},
				xtick={6,7,8,9},
		]
		
		\addplot[mark=*,mark options={scale=1,solid},color=seabornblue
		]
		coordinates {	
			
			(6, 0.04438968851900649)
			(7, 0.02212628739684407)
		    (8, 0.011052700024501929)
		    (9, 0.005524814220965205)
		};
		
		\addplot[
		mark=square,mark options={scale=2,solid},color=seabornblue
		]
		coordinates {
			
			(6, 0.04307088537116145)
			(7, 0.021543604738027314)
			(8, 0.01077283304389388)
			(9, 0.005386548835723917)
		};
		\addplot[dash pattern=on 4pt off 6pt on 8pt off 8pt, domain=6:9] {8/(2^x)};
		\addplot[
		mark=*,mark options={scale=1,solid},color=seabornred
		]
		coordinates {
			
			(6, 0.0011260822960482968)
			(7, 0.00027851901240584097)
			(8, 6.924469883804892e-05)
			(9, 1.726247039295428e-05)
		};
		
		\addplot[
		mark=square,mark options={scale=2,solid},color=seabornred
		]
		coordinates {	
			(6, 0.0010529529097721678)
			(7, 0.00026347540821209845)
			(8, 6.595187992460276e-05)
			(9, 1.6550313711828696e-05)
		};
		\addplot[dashed, domain=6:9] {12/(2^x)^2};
		\addplot[
		mark=*,mark options={scale=1,solid},color=seaborngreen
		]
		coordinates {	
			(6, 1.9693967872993604e-05)
			(7, 2.4559358821431164e-06)
			(8, 3.103736745825803e-07)
			(9, 4.3841250245177606e-08)
		};
		
		\addplot[
		mark=square,mark options={scale=2,solid},color=seaborngreen
		]
		coordinates {	
			(6, 1.8766419159722312e-05)
			(7, 3.020057918535396e-06)
			(8, 1.1761154937221932e-06)
			(9, 6.707475007673204e-07)
		};
		\addplot[dotted, domain=6:9] {14/(2^x)^3};
		\legend
		{,,$m=-1$,,,$m=-2$,,,$m=-3$}
		\end{semilogyaxis}
		\end{tikzpicture}	
	\end{subfigure}%
	\caption{Relative approximation errors and rates of convergence
		for the unit square domain with $c_0=2, \ c_1=4, \ \partial\Omega = \Gamma_0\cup\Gamma_2\cup \Gamma_3$ and $(u,\vec{v},\vec{\alpha}) \in DG_k(\Omega,\tau_h)\times RT_{k+1}(\Omega,\tau_h)\times RT_{k+1}(\Omega,\tau_h)$, $k\in{0,1,2}$. Blue, red, and green lines present results for $k=0,1,2$, respectively. \revise{Filled discs denote the measured $(u,\vec{v})$ error in the $L^2\times H(\text{div})$ norm, and squares denote the error in $\vec{\alpha}$ measured in the $H(\text{div})$ norm.  Reference lines of slopes -1, -2, and -3 approximate convergence rates.} Left: smooth solution $u_{ex} = u_{1ex}$. Right: rough solution $u_{ex} = u_{2ex}$.}\label{tab:ex2_uv}
\end{figure}

We next consider the classical biharmonic problem.  Figure \ref{tab:ex3_uv*} shows
results for the case where the exact solution is $u_{1ex}$ and $\partial\Omega = \Gamma_0\cup\Gamma_1\cup\Gamma_2$, and we take $(u,\vec{v},\vec{\alpha}) \in
DG_2(\Omega,\tau_h)\times RT_3(\Omega,\tau_h)\times RT_3(\Omega,\tau_h)$.  For this choice ($k=2$), Corollary \ref{cor1} gives an
expected convergence rate of $\mathcal{O}(h^2)$ for $(u,\vec{v})$ and of $\mathcal{O}(h)$ for
$\vec{\alpha}$.  In contrast with that result, we observe almost $\mathcal{O}(h^{5/2})$ for $(u,\vec{v})$ in the modified $L^2\times H(\text{div})$ norm and
$\mathcal{O}(h^{3/2})$ convergence for $\vec{\alpha}$. 
\begin{rmk}
	{For the right-triangular meshes given  in Figure~\ref{erd} (Left), $\gamma_1(2)\approx 41$ and $\gamma_1(3)\approx 62$. Therefore, we choose $\lambda=125, \ \text{and} \ 210$ respectively for the numerical results given in Figure~\ref{tab:ex3_uv*} and Table~\ref{bih}. Note that, for both cases, $\lambda> 3\gamma_1$, which is necessary to satisfy the coercivity condition \eqref{eq:hata_coercivity} for the bilinear form $\hat{a}$.}
\end{rmk}
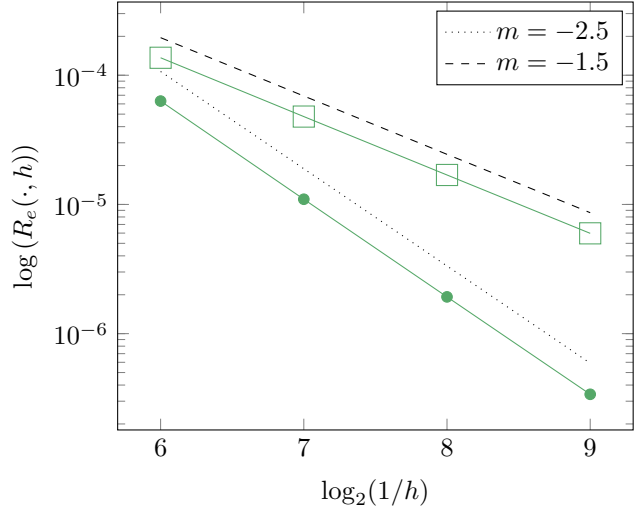
\begin{figure}
		\centering
		\begin{tikzpicture}
			\begin{semilogyaxis}[
				ylabel={$\log\left(R_e(\cdot,h)\right)$},
				xlabel={$\log_2(1/h)$},
				xtick={6,7,8,9},
				]
				\legend
				{,,$m=-2.5$,$m=-1.5$}
				\addplot[
				mark=*,mark options={scale=1,solid},color=seaborngreen
				]
				coordinates {

					(6, 6.3092502443371e-05)
					(7, 1.097198040062105e-05)
					(8, 1.9257655609904473e-06)
					(9, 3.393170335258995e-07)
				};
				
				\addplot[
				mark=square,mark options={scale=2,solid},color=seaborngreen
				]
				coordinates {

					(6, 0.0001365333378814537)
					(7, 4.787028335706131e-05)
					(8, 1.690466015705936e-05)
					(9, 5.975721616429765e-06)
					
				};
				\addplot[dotted, domain=6:9] {3.5/(2^x)^(5/2)};
				\addplot[dashed, domain=6:9] {.1/(2^x)^(3/2)};

			\end{semilogyaxis}
		\end{tikzpicture}
	\caption{ Relative approximation errors and rate of convergence
		for the biharmonic problem in the unit square domain and $ \partial\Omega=\Gamma_0\cup\Gamma_1\cup\Gamma_3$. Filled discs denote the measured $(u,v)$ error in the $L^2\times H(\text{div})$ norm, and squares denote the error in $\alpha$ measured in the $H(\text{div})$ norm.  \revisenew{Reference lines of slopes $-3/2$ and $-5/2$ approximate convergence rates.} The exact solution is $C^\infty$, $u_{ex}= u_{1ex}$. }\label{tab:ex3_uv*}
\end{figure}





\subsubsection{\revisenew{Nonsmooth solutions}}\label{nonsmooth}
\revisenew{In this section, we consider examples that do not conform to the theory presented above, with a variety of exact solutions that are less regular than $u\in H^4(\Omega)$.  To gain insight into the structure of the errors in these cases, we present errors separately for each of $u$ (measured in the $L^2$ norm), $\vec{v}$, and $\vec{\alpha}$ (both measured in the $H(\text{div})$ norm), with triangles denoting measured errors in $u$, asterisks denoting measured errors in $\vec{v}$, and squares denoting the error in $\vec{\alpha}$.}

\revisenew{In Figure \ref{tab:ex_uvalpha}, we consider the unit square domain with nonsmooth solution, $u_{3ex}=\left(\sin(2\pi x)+x^{\frac{7}{2}}\right)\left(\cos(3\pi y)+y^{\frac{13}{4}}\right)\in H^3(\Omega)$, noting that $u_{3ex}\notin H^4(\Omega)$.  Figure~\ref{tab:ex_uvalpha} presents convergence results for
$(u,\vec{v},\vec{\alpha})
  \in DG_k (\Omega,\tau_h)\times RT_{k+1}(\Omega,\tau_h) \times RT_{k+1}(\Omega,\tau_h)$, for $k\in\{0,1,2\}$, with values of $c_0$ and $c_1$ and boundary conditions matching those considered in Figure~\ref{tab:ex1_uv} (at left) and Figure~\ref{tab:ex2_uv} (at right).  Table~\ref{Aziz1} summarizes the measured convergence rates from this data, using the points at $\log_2(1/h)= 8 \text{ and } 9$. While our analysis does not cover this case, we observe that our mixed formulation converges suboptimally to the correct solution.  For the lowest-order discretization, we see convergence that is relatively good for all three components of the solution (but still somewhat suboptimal compared to the $\mathcal{O}(h)$ solution error we might hope to see), while we see diverging approximations of $\alpha$ for $k=1$ and $2$, with convergence rates of $\mathcal{O}(h^{1/4})$ for $u$ and $\vec{v}$.  It is natural to speculate that this occurs since the solution is in $H^{13/4-\epsilon}(\Omega)$ for any $\epsilon>0$, but we leave analysis of this case for future work.}
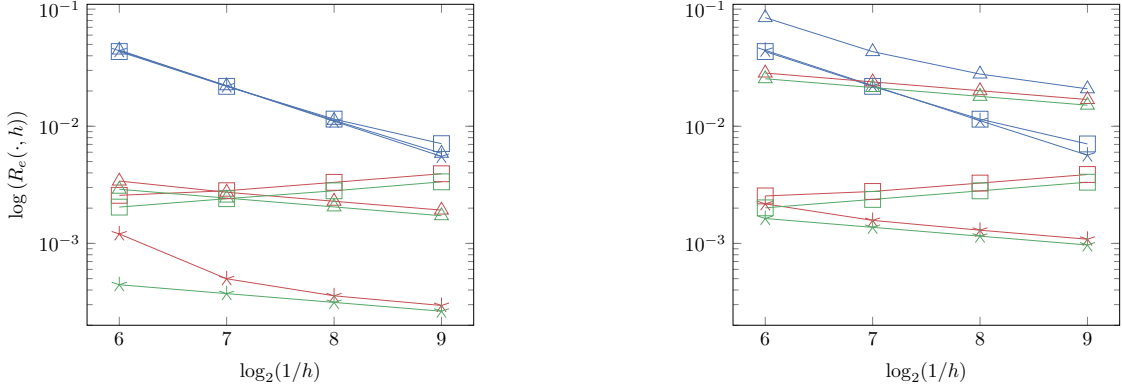
\begin{figure}
	\begin{subfigure}{.5\textwidth}
		\centering		
		\begin{tikzpicture}[scale=.75]
			\begin{semilogyaxis}[
				xlabel={$\log_2(1/h)$},
				xtick={6,7,8,9},
				ylabel={$\log\left(R_e(\cdot,h)\right)$},
				ymin=2e-4, ymax =1.1e-1,
				legend pos=outer north east
				]
				\addplot[
				mark=triangle,mark options={scale=2,solid},color=seabornblue
				]
				coordinates {
					(6, 0.04471871831053931)
					(7, 0.022167298611009276)
					(8, 0.011229237650515356)
					(9, 0.0058788404261674535)
				};
				\addplot[
				mark=star,mark options={scale=2,solid},color=seabornblue
				]
				coordinates {
					(6, 0.04415620630786615)
					(7, 0.022084606598908353)
					(8, 0.011047334176785403)
					(9, 0.005530385118526125)
				};
				\addplot[
				mark=square,mark options={scale=2,solid},color=seabornblue
				]
				coordinates {
					(6, 0.04343012122910707)
					(7, 0.021919037040669477)
					(8, 0.011509186160586536)
					(9, 0.007104643445756075)
				};
				\addplot[
				mark=triangle,mark options={scale=2,solid},color=seabornred
				]
				coordinates {			
					(6, 0.0034052488347851954)
					(7, 0.0027341177596181734)
					(8, 0.00228914780567003)
					(9,0.001924264738504105)
				};
				\addplot[
				mark=star,mark options={scale=2,solid},color=seabornred
				]
				coordinates {			
					(6, 0.0012044948977306344)
					(7, 0.0004988115122908802)
					(8, 0.00035686151561466564)
					(9, 0.00029499116381895144)
				};
				\addplot[
				mark=square,mark options={scale=2,solid},color=seabornred
				]
				coordinates {			
					(6, 0.0025729757040022297)
					(7,  0.0028163038713433716)
					(8, 0.0033212303323637755)
					(9, 0.0039311901068505815)
				};

				\addplot[
				mark=triangle,mark options={scale=2,solid},color=seaborngreen
				]
				coordinates {			
					(6, 0.0028992413199129164)
					(7, 0.0024359303672613825)
					(8, 0.002048086128112288)
					(9, 0.0017222697634819168)
				};
				\addplot[
				mark=star,mark options={scale=2,solid},color=seaborngreen
				]
				coordinates {			
					(6, 0.00044368990898592314)
					(7, 0.0003727485671804347)
					(8, 0.00031345080508541186)
					(9, 0.00026358938414988616)
				};	
				\addplot[
				mark=square,mark options={scale=2,solid},color=seaborngreen
				]
				coordinates {			
					(6, 0.0020415262123906)
					(7, 0.002408792712010754)
					(8, 0.0028124761360657754)
					(9, 0.0033539054942268907)
				};	
				
			\end{semilogyaxis}
		\end{tikzpicture}
		
	\end{subfigure}%
	\begin{subfigure}{.5\textwidth}
		\centering
		\begin{tikzpicture}[scale=.75]
			\begin{semilogyaxis}[
				xlabel={$\log_2(1/h)$},
				xtick={6,7,8,9},
				ymin=2e-4, ymax =1.1e-1,
				legend pos=outer north east
				]
				\addplot[
				mark=triangle,mark options={scale=2,solid},color=seabornblue
				]
				coordinates {
					(6, 0.0846095643843088)
					(7, 0.04336801751720005)
					(8, 0.02791910231198683)
					(9, 0.0208739760097631)
				};
				\addplot[
				mark=star,mark options={scale=2,solid},color=seabornblue
				]
				coordinates {
					(6, 0.0445562528827176)
					(7, 0.022239751492658093)
					(8, 0.011168369484044422)
					(9, 0.005670412360249303)
				};
				\addplot[
				mark=square,mark options={scale=2,solid},color=seabornblue
				]
				coordinates {
					(6, 0.04344518709104595)
					(7, 0.021920758343949995)
					(8, 0.011494522098725285)
					(9, 0.007063475024300224)
				};
				\addplot[
				mark=triangle,mark options={scale=2,solid},color=seabornred
				]
				coordinates {			
					(6, 0.02847942484485712)
					(7, 0.023920800430080994)
					(8, 0.020113018034149478)
					(9, 0.01691306636429596)
				};
				\addplot[
				mark=star,mark options={scale=2,solid},color=seabornred
				]
				coordinates {			
					(6, 0.002174699603682373)
					(7, 0.0015721295726885526)
					(8, 0.0012960400009328218)
					(9, 0.0010856113279670621)
				};
				\addplot[
				mark=square,mark options={scale=2,solid},color=seabornred
				]
				coordinates {			
					(6, 0.002541705863636511)
					(7, 0.002775781270190391)
					(8, 0.0032731912183909806)
					(9, 0.003874361372064378)
				};

				\addplot[
				mark=triangle,mark options={scale=2,solid},color=seaborngreen
				]
				coordinates {			
					(6, 0.025456029737045745)
					(7, 0.021406887135325313)
					(8, 0.018001811069538104)
					(9, 0.015138131226683272)
				};
				\addplot[
				mark=star,mark options={scale=2,solid},color=seaborngreen
				]
				coordinates {			
					(6, 0.001631899091579343)
					(7, 0.001371305584669851)
					(8, 0.0011527477460225313)
					(9, 0.0009692175096843715)
				};	
				\addplot[
				mark=square,mark options={scale=2,solid},color=seaborngreen
				]
				coordinates {			
					(6, 0.0020119423692414092)
					(7, 0.002373937918447346)
					(8, 0.0028099991203916603)
					(9, 0.0033299283283319413)
				};	
				
			\end{semilogyaxis}
		\end{tikzpicture}
	\end{subfigure}
	\caption{Relative approximation errors for the unit square domain with nonsmooth solution, $u_{3ex}\in H^3(\Omega)$.  Triangles denote the measured error in $u$ in the $L^2$ norm, stars denote the measured error in $\vec{v}$ in the $H(\text{div})$ norm, and squares denote the measured error in $\vec{\alpha}$ in the $H(\text{div})$ norm. Blue, red, and green lines present results for $k=0,1,2$, respectively. Left: $c_0=0$, $c_1 =1$, and $\partial\Omega = \Gamma_0\cup\Gamma_3$. Right: $c_0= 2$, $c_1 = 4$, and $\partial\Omega=\Gamma_0\cup\Gamma_2\cup\Gamma_3$.}\label{tab:ex_uvalpha}
\end{figure}
\begin{table}
	\centering
	\caption{Experimental convergence rates for the unit square domain with exact solution $u_{ex}= \left(\sin(2\pi x)+ x^{7/2}\right)\left(\cos(3\pi y)+y^{13/4}\right)$, and $(u,\vec{v},\vec{\alpha})\in DG_{k}\times RT_{k+1}\times RT_{k+1}$, $k\in\{0,1,2\}$. Errors in $u$ are measured in the $L^2$ norm and errors in $\vec{v}$ and $\vec{\alpha}$ are measured in the $H(\text{div})$ norm. Dashes mean that we do not observe convergence.} \label{Aziz1}
	\begin{tabular}{l|*{2}{c}}\toprule
		\backslashbox{$k$}{}
		&\makebox{Figure \ref{tab:ex_uvalpha} (left)}&\makebox{Figure \ref{tab:ex_uvalpha} (right). }
		\\\midrule
		$k=0$   &(0.93, 1.00, 0.70)&(0.42, 0.98, 0.70) \\
		$k=1$	&(0.25, 0.27, -)&(0.26, 0.24, -) \\
		$k=2$	&(0.25, 0.25, -)&(0.25, 0.25, -) \\\bottomrule
	\end{tabular}
\end{table}

\revise{In the next experiments, we consider the L-shaped domain with $\partial\Omega=\Gamma_0$ and discretizations with $k\in \{0,1,2\}$.  Here, we consider harmonic functions as exact solutions, given in polar coordinates as $u_{p} = r^{2p/3} \sin(2p\theta/3)$, for $p\in \{1,2,4\}$, and impose nonhomogeneous boundary conditions to match the solution along the boundary \revisenew{(enforced weakly, by including the boundary integrals generated when integrating by parts on Equations \eqref{eq:graddiv_v} and \eqref{eq:grad_u} to get Equations \eqref{eq:weak_psi} and \eqref{3.16}, respectively, with known values for $\nabla\cdot \vec{v}_p = \Delta u_p$ and $u_p$)}.  We place the origin for the $(r,\theta)$ coordinates at the re-entrant corner at $(1/2,1/2)$, and measure the angle counterclockwise from the positive $y$-direction from this origin (see Figure~\ref{erd} (right)).  This results in homogeneous Dirichlet BCs for $u$ along the two edges incident to the re-entrant corner, but non-homogeneous values on the other four edges of the domain. Note that $(u_p,\vec{v}_p)$ are in $H^{1+2p/3-\epsilon}(\Omega)\times [H^{2p/3-\epsilon}(\Omega)]^2$~\cite[Theorem 1.2.18]{grisvard1992singularities}, for any $\epsilon>0$, \revisenew{but not in $H^{1+2p/3}(\Omega)\times [H^{2p/3}(\Omega)]^2$}, with $\vec{\alpha}_p =\vec{0}$. Thus, we might expect to correctly compute $\vec{\alpha}_p$, since it is trivially represented in the $RT_{k+1}$ space for all $k\geq 0$, but expect, at best, to obtain $\mathcal{O}(h^{\min\{1+2p/3, k+1\}})$ and $\mathcal{O}(h^{\min\{2p/3, k+1\}})$ convergence rates for $u_p$ in the $L^2$ norm and $\vec{v}_p$ in the $H(\text{div})$ norm, respectively.  Considering that the error analysis above is done in the $L^2\times H(\text{div})$ product norm, it may even be reasonable to expect no better than $\mathcal{O}(h^{\min\{2p/3, k+1\}})$ convergence rates for both $u_p$ and $\vec{v}_p$.}

\revise{From most to least smooth, we show results for this problem for $p=4$ in Figure \ref{tab:ex3_uv} and for $p=2$ (at right) and $p=1$ (at left) in Figure \ref{tab:ex4_uv}.  In all cases, we omit reporting results for $\vec{\alpha}_p=\vec{0}$, as the absolute error is always at the level of machine precision.  Table~\ref{Aziz} summarizes convergence rates, measuring the slopes of this data using the points at $\log_2(1/h)= 8 \text{ and } 9$. For $p=2$ and $p=4$, we observe best possible convergence rates for both $u$ and $\vec{v}$, while we observe the best possible convergence rate for $\vec{v}$, but slightly less than this for $u$, when $p=1$.  
	(Preliminary numerical experiments, not reported here, showed that multiplying a harmonic function by a smooth function, to yield nonzero $\vec{\alpha}$ resulted in similarly reduced convergence orders for $\vec{\alpha}$, including a lack of convergence when taking $k=0$).  \revisenew{Clearly for $p=1$, $u_p\notin H^2(\Omega)$, so our proposed method is converging to a function that cannot be a true solution of the biharmonic problem, but is admitted by the weaker mixed formulation that requires only $u\in L^2(\Omega)$ and $\vec{v} = \nabla u \in H(\text{div};\Omega)$.  This is confirmed by computing the smallest eigenvalue of this discretization of the biharmonic, which yields a value of approximately 1,486 on a mesh with $h=1/128$ discretized with $u \in DG_1(\Omega,\tau_h)$ and $\vec{v},\vec{\alpha}\in RT_2(\Omega,\tau_h)$, close to the square of the smallest Laplacian eigenvalue and not the true smallest biharmonic eigenvalue of approximately 2,641 (cf., \cite{MR3630795}). This is reminiscent of the Sapondjan paradox, where solving the biharmonic problem via a mixed formulation involving two Poisson problems gives the wrong solution on nonconvex domains~\cite[pg.~268]{MR1283387}.} Whether anything can be proven about the convergence to solutions with degraded regularity \revisenew{(meaning $u\in H^2(\Omega)$, but not $H^4(\Omega)$)} remains an open question for future research \revisenew{that is clearly complicated by the weaker spaces used in the mixed formulation}.}
\begin{figure}
		\centering
		\begin{tikzpicture}
			\begin{semilogyaxis}[
				xlabel={$\log_2(1/h)$},
				xtick={6,7,8,9},
				ylabel={$\log\left(R_e(\cdot,h)\right)$},
				]
				\addplot[
				mark=triangle,mark options={scale=2,solid},color=seabornblue
				]
				coordinates {
					(6, 0.019151705397277643)
					(7, 0.009575973147087736)
					(8, 0.004788001625837589)
					(9, 0.002394002694347563)
				};
				\addplot[
				mark=star,mark options={scale=2,solid},color=seabornblue
				]
				coordinates {
					(6, 0.019844983981633642) 
					(7, 0.009923399929995629)
					(8, 0.004961825684210949)
					(9, 0.002480930085384756)
				};

				\addplot[
				mark=triangle,mark options={scale=2,solid},color=seabornred
				]
				coordinates {			
					(6, 0.00012607151332429757)
					(7, 3.15178646458411e-05)
					(8, 7.879465322509811e-06)
					(9, 1.969866278639525e-06)
				};
				\addplot[
				mark=star,mark options={scale=2,solid},color=seabornred
				]
				coordinates {			
					(6, 6.723479343133009e-05)
					(7, 1.6811826546100174e-05)
					(8, 4.203277988365137e-06)
					(9, 1.0508526433350594e-06)
				};

				\addplot[
				mark=triangle,mark options={scale=2,solid},color=seaborngreen
				]
				coordinates {			
					(6, 2.84997375964379e-07)
					(7, 3.563026405833877e-08)
					(8, 4.454060714216331e-09)
					(9, 5.56771375863909e-10)
				};
				\addplot[
				mark=star,mark options={scale=2,solid},color=seaborngreen
				]
				coordinates {			
					(6, 3.679826571795428e-07)
					(7, 5.8296768590712765e-08)
					(8, 9.215042026712448e-09)
					(9, 1.4546342260683967e-09)
				};	
				
			\end{semilogyaxis}
		\end{tikzpicture}
	\caption{Relative approximation errors for the biharmonic problem on the L-shaped domain with $\partial\Omega = \Gamma_0$ with triangles denoting the measured $u$ error in the $L^2$ norm, and stars denoting the error in $\vec{v}$ measured in the $H(\text{div})$ norm with $u_{ex} = u_{4}$. Blue, red, and green lines present results for $k=0,1,2$, respectively.}\label{tab:ex3_uv}
\end{figure}
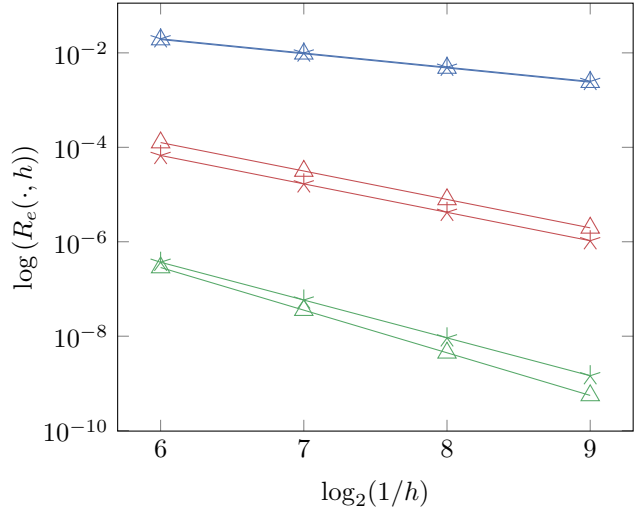

		\begin{figure}
			\begin{subfigure}{.5\textwidth}
				\centering		
				\begin{tikzpicture}[scale=.75]
					\begin{semilogyaxis}[
						xlabel={$\log_2(1/h)$},
				xtick={6,7,8,9},
						ylabel={$\log\left(R_e(\cdot,h)\right)$},
						legend pos=outer north east
						]
						\addplot[
						mark=triangle,mark options={scale=2,solid},color=seabornblue
						]
						coordinates {
							(6, 0.006811184242226288)
							(7, 0.003399666597531204)
							(8, 0.0016978623591794493)
							(9, 0.0008482897178196133)
						};
						\addplot[
						mark=star,mark options={scale=2,solid},color=seabornblue
						]
						coordinates {
							(6, 0.02949680869734019)
							(7, 0.01866792969233651)
							(8, 0.011794407490004594)
							(9, 0.0074435141397885445)
						};
						\addplot[
						mark=triangle,mark options={scale=2,solid},color=seabornred
						]
						coordinates {			
							(6, 0.0001374662188108578)
							(7, 5.2441180533936956e-05)
							(8, 2.025584110914861e-05)
							(9, 7.894975525145211e-06)
						};
						\addplot[
						mark=star,mark options={scale=2,solid},color=seabornred
						]
						coordinates {			
							(6, 0.012366685919134366)
							(7, 0.0077907158393263325)
							(8, 0.004907891064923884)
							(9, 0.0030917895035194065)
						};

						\addplot[
						mark=triangle,mark options={scale=2,solid},color=seaborngreen
						]
						coordinates {			
							(6, 5.1276726943612194e-05)
							(7, 1.9989945556031786e-05)
							(8, 7.841838363230986e-06)
							(9, 3.0890181148715895e-06)
						};
						\addplot[
						mark=star,mark options={scale=2,solid},color=seaborngreen
						]
						coordinates {			
							(6, 0.007756488624046035)
							(7, 0.004886328240298008)
							(8, 0.003078205552284382)
							(9, 0.0019391508978769257)
						};	
						
					\end{semilogyaxis}
				\end{tikzpicture}
				
			\end{subfigure}%
			\begin{subfigure}{.5\textwidth}
				\centering
				\begin{tikzpicture}[scale=.75]
					\begin{semilogyaxis}[
						xlabel={$\log_2(1/h)$},
				xtick={6,7,8,9},
						ylabel={$\log\left(R_e(\cdot,h)\right)$},
						legend pos=outer north east
						]
						\addplot[
						mark=triangle,mark options={scale=2,solid},color=seabornblue
						]
						coordinates {
							(6, 0.011160984504385418)
							(7, 0.005580501623862287)
							(8, 0.0027902520865161966)
							(9, 0.0013951262108123713)
						};
						\addplot[
						mark=star,mark options={scale=2,solid},color=seabornblue
						]
						coordinates {
							(6, 0.006400709012936982) 
							(7, 0.0032149398702156556)
							(8, 0.0016120452328319671)
							(9, 0.0008074600821214635)
						};

						\addplot[
						mark=triangle,mark options={scale=2,solid},color=seabornred
						]
						coordinates {			
							(6, 2.369741022885573e-05)
							(7, 5.951004238206241e-06)
							(8, 1.491927420177557e-06)
							(9, 3.7363752742424024e-07)
						};
						\addplot[
						mark=star,mark options={scale=2,solid},color=seabornred
						]
						coordinates {			
							(6, 0.00032960153032767983)
							(7, 0.00013083956442770146)
							(8, 5.1929558536295235e-05)
							(9, 2.060917906640518e-05)
						};
						
						\addplot[
						mark=triangle,mark options={scale=2,solid},color=seaborngreen
						]
						coordinates {			
							(6, 8.057432305487513e-07)
							(7, 1.5994872659275176e-07)
							(8, 3.1744330899205454e-08)
							(9, 6.299514774064254e-09)
						};
						\addplot[
						mark=star,mark options={scale=2,solid},color=seaborngreen
						]
						coordinates {			
							(6, 0.0001175965427661822)
							(7, 4.666821986410038e-05)
							(8, 1.8520295367036994e-05)
							(9, 7.3497840889249145e-06)
						};

					\end{semilogyaxis}
				\end{tikzpicture}
			\end{subfigure}
			\caption{Relative approximation errors for the biharominc problem on the L-shaped domain with $\partial\Omega = \Gamma_0$ with triangles denoting the measured $u$ error in the $L^2$ norm, and stars denoting the error in $\vec{v}$ measured in the $H(\text{div})$ norm. Blue, red, and green lines present results for $k=0,1,2$, respectively. Left: $u_{ex}= u_1$. Right: $u_{ex}= u_2$.}\label{tab:ex4_uv}
		\end{figure}
		
		\begin{table}
			\centering
			\caption{Experimental convergence rates for the L-shaped domain with $\partial\Omega=\Gamma_0$ and $(u,\vec{v},\vec{\alpha})\in DG_{k}\times RT_{k+1}\times RT_{k+1}$, $k\in\{0,1,2\}$. The exact solutions are $u_p = r^{2p/3}\sin(2p\theta/3)$, $p=1,2,4$. Errors in $u$ are measured in the $L^2$ norm and errors in $\vec{v}$ are measured in the $H(\text{div})$ norm. Results for $\vec{\alpha}_p=\vec{0}$ are omitted as it is exactly approximated by this discretization. } \label{Aziz}
			\begin{tabular}{l|*{3}{c}}\toprule
				\backslashbox{$k$}{p}
				&\makebox{Figure \ref{tab:ex4_uv} (left, $p=1$)}&\makebox{Figure \ref{tab:ex4_uv} (right, $p=2$)}&\makebox{Figure \ref{tab:ex3_uv} ($p=4$)}
				\\\midrule
				$k=0$   &(1.00, 0.66)&(1.00, 1.00) &(1.00, 1.00) \\
				$k=1$	&(1.36, 0.67)&(2.00, 1.33) &(2.00, 2.00) \\
				$k=2$	&(1.34, \revisenew{0.67})&(2.33, 1.33)&(3.00, 2.66) \\\bottomrule
			\end{tabular}
		\end{table}
		
\subsubsection{\revisenew{Monolithic multigrid solvers}}\label{Solver}
\revisenew{Finally, we report on the effectiveness of the monolithic multigrid preconditioner proposed in Section~\ref{sec:multigrid}, using the exact solution, $u_{1ex}$.} To demonstrate the effectiveness of the monolithic multigrid
preconditioner, Table \ref{tab:2D_timings1} presents iteration counts
and CPU times to solution for both the multigrid-preconditioned
FGMRES iterations and the use of a direct solver (MUMPS \cite{amestoy2001}, via the PETSc interface) for the unit square domain, with 
$(u,\vec{v},\vec{\alpha}) \in DG_2(\Omega,\tau_h)\times RT_3(\Omega,\tau_h)\times RT_3(\Omega,\tau_h)$ and
$\partial\Omega = \Gamma_0\cup\Gamma_3$. We note that the iteration counts for monolithic multigrid-preconditioned FGMRES are consistent through all
runs and mesh sizes, and that the scaling of wall-clock time for this approach is
$\mathcal{O}(\revisenew{N})$ or better throughout.  While the direct solver is
faster for small mesh sizes, we see worse than
$\mathcal{O}(\revisenew{N})$ scaling for the wall-clock time with MUMPS at larger mesh sizes, showing the
expected behaviour. Moreover, as we vary the number
of processors over which we parallelize the computation, we see that,
for sufficiently large problems, we have reasonable strong parallel
scalability with the monolithic multigrid solver, although MUMPS is always
faster than our multigrid implementation for this two-dimensional problem.
Table \ref{tab:2D_timings3} presents the case of $\partial\Omega=\Gamma_0\cup\Gamma_2\cup\Gamma_3$, $(u,\vec{v},\vec{\alpha}) \in DG_1(\Omega,\tau_h)\times RT_2(\Omega,\tau_h)\times RT_2(\Omega,\tau_h)$. As we increase the number of processors from 4 to 16, we see better performance with the multigrid solver than the direct solver.  Again, we have good strong parallel scalability with the monolithic multigrid solver, showing \revisenew{3.83x}
speedup for the $1024^2$ mesh, while the direct solver (MUMPS) shows only \revisenew{1.55x} speedup.
\begin{table}
	\centering
	\caption{Wall-clock time (in seconds) and iterations to
		convergence with varying numbers of processors, $p$, for
		monolithic multigrid and a direct solver (MUMPS) for the
		unit square domain with $c_0=0$, $c_1=1$, $\partial\Omega = \Gamma_0\cup\Gamma_3$ and
		$(u,\vec{v},\vec{\alpha}) \in DG_2(\Omega,\tau_h)\times RT_3(\Omega,\tau_h)\times
		RT_3(\Omega,\tau_h)$.} \label{tab:2D_timings1}
	\begin{tabular}{ c| c c c |c c}  
		\toprule
		\multicolumn{1}{c|}
		{${h^{-1}}$} & \multicolumn{3}{c|}{Monolithic}& \multicolumn{2}{c}{MUMPS} \\ \midrule
		&Iterations& Time ($p=1$) & Time ($p=4$) &Time ($p=1$) & Time ($p=4$)\\ \midrule
		$2^6$&5 & 31.48 & 15.45 & 6.07& 4.11\\ 
		$2^7$&5 & 97.30& 37.04& 16.55& 9.35\\ 
		$2^8$&5 & 386.82 & 137.35&74.39 & 39.19\\ 	
		$2^{9}$&5&1553.12 & 623.34&358.06 &183.83 \\ \bottomrule
	\end{tabular}
\end{table}

\begin{table}
	\centering
	\caption{Wall-clock time (in seconds) and iterations to
		convergence with varying numbers of processors, $p$, for
		monolithic multigrid and a direct solver (MUMPS) for the
		unit square domain with $c_0=2$, $c_1=4$, $\partial\Omega = \Gamma_0\cup\Gamma_2\cup\Gamma_3$ and
		$(u,\vec{v},\vec{\alpha}) \in DG_1(\Omega,\tau_h)\times RT_2(\Omega,\tau_h)\times
		RT_2(\Omega,\tau_h)$.} \label{tab:2D_timings3}
	\begin{tabular}{ c| c c c |c c}  
		\toprule
		\multicolumn{1}{c|}
		{$h^{-1}$} & \multicolumn{3}{c|}{Monolithic}& \multicolumn{2}{c}{MUMPS} \\ \midrule
		&Iterations& Time ($p=4$) & Time ($p=16$) &Time ($p=4$) & Time ($p=16$)\\ \midrule
		$2^7$&5 & 19.00 & 10.18 &6.22 & 4.46\\ 
		$2^8$&5 & 48.62 &18.53 & 17.96& 12.24\\ 
		$2^9$&5 & 178.60 & 58.65& 83.58 &53.89 \\ 	
		$2^{10}$&5&854.42 &223.28 & 387.61&250.35 \\ \bottomrule
	\end{tabular}
\end{table}

Finally, we consider the classical biharmonic operator with clamped boundary conditions, i.e., $c_0=c_1=0$ and $\partial\Omega=\Gamma_1$ and $(u,\vec{v},\vec{\alpha})\in DG_3(\Omega,\tau_h)\times RT_4(\Omega,\tau_h)\times RT_4(\Omega,\tau_h)$. Table \ref{bih} shows the effectiveness of the monolithic multigrid solver with an $\mathcal{O}(h^{-1})$ weight on the auxiliary operator. Dashes in the table mean that more than 100 iterations were required to converge when the residual norm or its relative reduction is less than $\revisenew{10^{-12}}$. We note that, due to a technical limitation in PCPATCH (where Nitsche boundary terms cannot be treated), these results use an alternate implementation of the star relaxation scheme that is less efficient than PCPATCH.  Consequently, we do not report timings for these experiments, as they are not comparable to the timings reported elsewhere in this paper.
\begin{table}
	\centering
	\caption{Number of iterations to converge with different weights on the auxiliary operator. Here $(u,\vec{v},\vec{\alpha})\in DG_3(\Omega,\tau_{h})\times RT_4(\Omega,\tau_{h})\times RT_4(\Omega,\tau_{h})$. A dash means that convergence was not achieved in 100 iterations.} \label{bih}
	\begin{tabular}{l|*{6}{c}}\toprule
		\backslashbox{$h^{-1}$}{weight}
		&\makebox{1}&\makebox{10}&\makebox{20}
		&\makebox{40}&\makebox{80}&\makebox{$h^{-1}$}\\\midrule
		$2^6$	&29&11 &11 &10 &10 &10 \\
		$2^7$	&- &17 &13 &10 &10 & 10\\
		$2^8$	&- &46 &22 &16 &12 &10  \\
		$2^9$	&- &-  &-  &-& -&  9\\\bottomrule
	\end{tabular}
\end{table}

\subsection{$3D$ experiments}
Here, we consider a test case on the unit cube, with right-hand side and boundary conditions chosen so
that the exact solution is $u_{ex}=\sin(2\pi x)\cos(3\pi y)\sinh(\pi
z)$.  Finite-element convergence is demonstrated in Figure
\ref{3D:square} for $k \in \{0,1,2\}$ with $\partial\Omega = \Gamma_0\cup\Gamma_2\cup\Gamma_3$, with $\Gamma_0$ corresponding to $z=0$ and $z=1$, $\Gamma_2$ corresponding to $y=0$ and $y=1$, and $\Gamma_3$ corresponding to $x=0$ and $x=1$, showing convergence consistent with the analysis of Corollary \ref{thm4}. Table \ref{tab:3D_timings} details the performance of the monolithic multigrid-preconditioned FGMRES solver for $k=0$, compared with a standard direct solver (MUMPS).  We see excellent performance of the monolithic multigrid method, with
iteration counts that are independent of problem size and CPU
time scaling linearly with problem size, and decreasing with
parallelization for sufficiently large problems.  In contrast, we see the
expected rapid growth of required CPU times for MUMPS, and suboptimal
parallel scaling, showing the utility and power of the monolithic
multigrid approach.

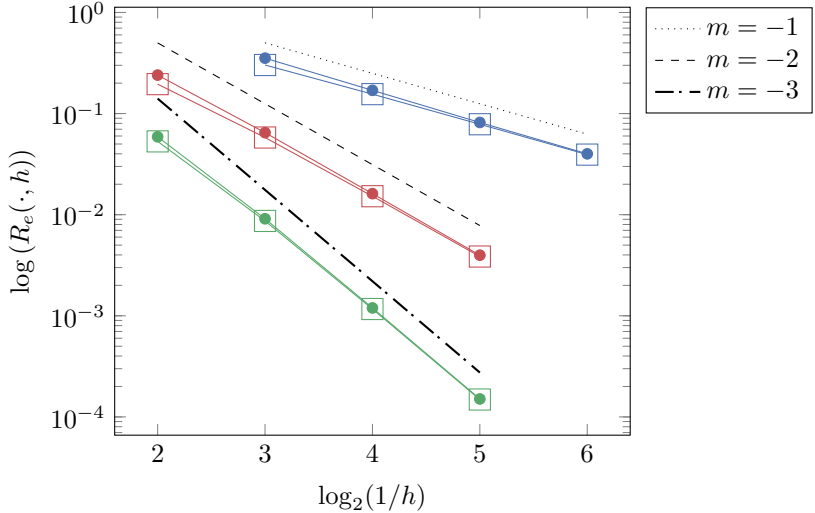
\begin{figure}
	\centering
	
	\begin{tikzpicture}
	\begin{semilogyaxis}[
	xlabel={$\log_2(1/h)$},
	ylabel={$\log\left(R_e(\cdot,h)\right)$},
	legend pos=outer north east
	]
	
	\addplot[
	mark=*,mark options={scale=1,solid},color=seabornblue
	]
	coordinates {

		(3, 0.3532475842379772)
		(4, 0.169724598619751)
		(5, 0.0816565486495772)
		(6, 0.03997958581928016)	
	};
	\addplot[dotted, domain=3:6] {4/(2^x)};
	\addplot[
	mark=square,mark options={scale=2,solid},color=seabornblue
	]
	coordinates {

		(3, 0.3021637748573197)
		(4, 0.15563256338955625)
		(5, 0.07841383140534439)
		(6, 0.03928115131195398)	
	};

	\addplot[
	mark=*,mark options={scale=1,solid},color=seabornred
	]
	coordinates {
		(2, 0.24011465960697295)
		(3, 0.06477039545264777)
		(4, 0.016151393377820025)
		(5, 0.003977249372758478)	
	};
	\addplot[dashed, domain=2:5] {8/(2^x)^2};
	\addplot[
	mark=square,mark options={scale=2,solid},color=seabornred
	]
	coordinates {

		(2, 0.19544623241025652)
		(3, 0.057973618649917215)
		(4, 0.01523050681910914)
		(5, 0.0038570146949304584)	
	};

	\addplot[
	mark=*,mark options={scale=1,solid},color=seaborngreen
	]
	coordinates {

		(2, 0.05895572410965531)
		(3, 0.009109146646074207)
		(4, 0.001196666963350553)
		(5, 0.0001505724139069509)
	};
	\addplot[thick,dash pattern={on 7pt off 2pt on 1pt off 3pt}, domain=2:5] {9/(2^x)^3};
	\addplot[
	mark=square,mark options={scale=2,solid},color=seaborngreen
	]
	coordinates {

		(2, 0.05308727311679876)
		(3, 0.008674745536072165)
		(4, 0.0011672568052638957)
		(5, 0.0001486761018499699)
	};

	\legend
	{,$m=-1$,,,$m=-2$,,,$m=-3$}
	\end{semilogyaxis}
	\end{tikzpicture}
	\caption{Relative approximation errors and rates of convergence
		for the unit cube domain $ \partial\Omega = \Gamma_0\cup\Gamma_2\cup\Gamma_3$, $c_0=4$ and $c_1=2$. Blue, red, and green lines present results for $k=0,1,2$, respectively. \revisenew{Reference lines of slopes -1, -2, and -3 approximate convergence rates.}}\label{3D:square}
\end{figure}
\begin{table}
	\centering
	\caption{Wall-clock time (in seconds) and iterations to
		convergence with varying numbers of processors, $p$, for
		monolithic multigrid and a direct solver (MUMPS) for the
		unit cube domain with $c_0=4$, $c_1=2$, $\partial\Omega = \Gamma_0\cup\Gamma_2\cup\Gamma_3$ and
		$(u,\vec{v},\vec{\alpha}) \in DG_0(\Omega,\tau_h)\times RT_1(\Omega,\tau_h)\times
		RT_1(\Omega,\tau_h)$.} \label{tab:3D_timings}
	\begin{tabular}{ c| c c c |c c}  
		\toprule
		\multicolumn{1}{c|}
		{$h^{-1}$} & \multicolumn{3}{c|}{Monolithic}& \multicolumn{2}{c}{MUMPS} \\ \midrule
		&Iterations& Time ($p=1$) & Time ($p=8$) &Time ($p=1$) & Time ($p=8$)\\ \midrule
		$2^3$&9 &8.05 &5.80 & 2.08& 1.92\\ 
		$2^4$&9 &20.07 &7.34 & 6.80& 2.99\\ 
		$2^5$&9 & 229.04& 37.59&124.05 &39.22 \\ 	
		$2^6$&9& 1847.76 & 276.34& 4903.83& 1148.47\\ \bottomrule
	\end{tabular}
\end{table}

\section{Conclusion}
\label{sec:conclusion}
We consider the mixed finite-element approximation of solutions to
$H^2$-elliptic fourth-order problems, achieved by the transformation of the
fourth-order equation into a system of PDEs.  We find that under natural assumptions on the coefficients of the problem, three
combinations of boundary conditions lead to optimal finite-element convergence. For the fourth case of boundary conditions (``clamped'' boundary conditions, on the solution
and its normal derivative), suboptimal rates of convergence are expected and observed when implemented
using Nitsche's method. While the approach is applicable in both two and three dimensions, we note that it is particularly attractive in 3D, where the cost of conforming methods is prohibitive. It remains an open question whether or not it is possible to employ
alternative approaches (such as adapting
the Nitsche boundary conditions, or the use of alternative penalty
approaches) to regain optimal finite-element convergence for the boundary
conditions where suboptimal convergence is proven and observed here.
\revise{As is common for mixed formulations, the approach we propose herein has higher regularity requirements to guarantee convergence than other possible approaches.  \revisenew{Numerical results suggest that acceptable convergence can be achieved in some cases when this regularity is not present (for a solution in $H^{3}(\Omega)$ but not $H^4(\Omega)$), but in some cases inadmissible solutions are found (with $u\notin H^2(\Omega)$) due to the weaker spaces used.}  Understanding convergence in these cases is a target for future research.}
We additionally propose a
monolithic multigrid algorithm with optimal scaling for the resulting discrete linear
systems. For three-dimensional problems, this approach yields a
preconditioned FGMRES iteration that dramatically outperforms state-of-the-art
direct solvers.

\bibliography{ref}

\end{document}